\newcommand{\NN}{\mathbb{N}}
\newcommand{\Om}{\Omega}
\newcommand{\RR}{{\mathbb{R}}}
\newcommand{\clf}{{\mathcal{F}}}
\newcommand{\cls}{{\mathcal{S}}}
\newcommand{\clp}{{\mathcal{P}}}
\newcommand{\Pmb}{{\mathbb{P}}}
\newcommand{\XX}{\mathbf{X}}
\newcommand{\Lb}{\mathbf{L}}
\newcommand{\tXX}{\widetilde{\mathbf{X}}}
\newcommand{\YY}{\mathbf{Y}}
\newcommand{\A}{\mathcal{A}}
\newtheorem{theorem}{Theorem}[section]
\newtheorem{corollary}[theorem]{Corollary}
\newtheorem{lemma}[theorem]{Lemma}
\newtheorem{proposition}[theorem]{Proposition}
\theoremstyle{definition}
\newtheorem{remark}[theorem]{Remark}
\let\plainqed\qedsymbol
\newcommand{\claimqed}{$\lrcorner$}
\newcommand{{\LPC}}{\textbf{LPC}}
\newcommand{\R}{\mathbb{R}}
\newcommand{\Z}{\mathbf{Z}}
\newcommand{\tZ}{\widetilde{\mathbf{Z}}}
\newcommand{\U}{\mathbf{U}}
\newcommand{\V}{\mathbf{V}}
\newcommand{\Y}{\mathbf{Y}}
\newcommand{\z}{\mathbf{z}}
\newcommand{\vv}{\mathbf{v}}
\newcommand{\N}{\mathcal{N}}
\newcommand{\PP}{\mathbb{P}}
\newcommand{\equald}{\stackrel{\mathrm{d}}{=}}
\numberwithin{equation}{section}
\begin{document}

\begin{frontmatter}
\title{Domains of attraction of invariant distributions of the infinite Atlas model}
\runtitle{Atlas model}

\begin{aug}
\author[A]{\fnms{Sayan} \snm{Banerjee}\ead[label=e1]{sayan@email.unc.edu}},
\author[A]{\fnms{Amarjit} \snm{Budhiraja}\ead[label=e2,mark]{budhiraj@email.unc.edu}}
\address[A]{Department of Statistics and Operations Research, 304 Hanes Hall, University of North Carolina, Chapel Hill, NC 27599, \printead{e1,e2}}

\end{aug}

\begin{abstract}
The infinite Atlas model describes a countable system of competing Brownian particles where the lowest particle gets a unit upward drift and the rest evolve as standard Brownian motions. The stochastic process of gaps between the particles in the infinite Atlas model does not have a unique stationary distribution and in fact for every $a \ge 0$, $\pi_a := \bigotimes_{i=1}^{\infty} \operatorname{Exp}(2 + ia)$ is a stationary measure for the gap process. We say that an initial distribution of gaps is in the weak domain of attraction of the stationary measure $\pi_a$ if the time averaged laws of the stochastic process of the gaps, when initialized using that distribution,   converge to $\pi_a$ weakly in the large time limit.  We provide general sufficient conditions on the initial gap distribution of the Atlas particles for it  to lie in the weak domain of attraction of $\pi_a$ for each $a\ge 0$. The cases $a=0$ and $a>0$ are qualitatively different as is seen from  the analysis and the sufficient conditions that we provide. Proofs  are based on the analysis of synchronous couplings, namely, couplings of the ranked particle systems started from different initial configurations, but driven using the same set of Brownian motions.
\end{abstract}

\begin{keyword}[class=MSC2020]
\kwd[Primary ]{60K35,82C22,60J60}
\kwd[; secondary ]{60J55}
\end{keyword}

\begin{keyword}
\kwd{Atlas model} 
\kwd{interacting diffusions}
\kwd{infinite dimensional diffusions}
\kwd{time averaged occupancy measures}
\kwd{synchronous couplings}
\kwd{local time}
\kwd{ergodicity}
\end{keyword}

\end{frontmatter}
\section{Introduction}\label{intro}
Rank-based diffusions comprise systems of Brownian particles where the drift and diffusivity of each particle depends on its relative rank in the system. Such diffusions are given by a system of stochastic differential equations (SDE) for $m \in \mathbb{N}_0 \cup\{\infty\}$, of the following form:  
\begin{equation}\label{rankbd}
dY_i(s) = \sum_{j=0}^m a_j \mathbf{1}(Y_i(s) = Y_{(j)}(s))ds + \sum_{j=0}^m b_j \mathbf{1}(Y_i(s) = Y_{(j)}(s))d W_i(s),
\end{equation}
for $s \ge 0,  i \in \{0,\dots,m\}$. Here, $a_j$ and $b_j$ respectively denote the drift and diffusion coefficients of the rank $j$ particle, $\{W_i(\cdot) : i \in \{0,\dots,m\}\}$ are independent standard Brownian motions,
$\YY(0) = \{Y_i(0)\}_{i=0}^m$ is a $\RR^m$-valued random variable independent of the above Brownian motions,
 and $\{Y_{(i)}(\cdot) : i \in \{0,\dots,m\}\}$ are the associated ranked (ordered) processes defined in an appropriate way.
Rank-based diffusions have gained significant attention in recent years as  models in Stochastic Portfolio Theory \cite{fernholz2002stochastic,fernholz2009stochastic}, scaling limits of interacting particle systems like the simple exclusion process \cite{karatzas2016systems}, and their connection with nonlinear diffusion processes \cite{dembo2016large,jourdain2008propagation}. \textcolor{black}{Rank-based diffusions also have  close connections with Aldous' ``Up the river'' stochastic control problem \cite{aldous41up}, which was recently solved in \cite{tang2018optimal}.}

The finite-dimensional setting $m < \infty$ is reasonably well-understood as in that case the system of gaps between the ranked particles can be described by a finite-dimensional reflected Brownian motion (RBM). One can then apply the general theory of RBM \cite{harrison1987brownian,harrison1987multidimensional,williams1995semimartingale} to study the process in a pathwise sense. In particular, it is known that the system in \eqref{rankbd}  has a unique weak solution \cite{bass1987uniqueness}. Moreover, the solution exists in a strong sense and is pathwise unique until the first time of a triple collision (three particles at the same location) \cite{ichiba2013strong}, and general criteria are known for the almost sure absence of triple collisions \cite{ichiba2010collisions,sarantsev2015triple,ichiba2017yet}. In particular, for the finite Atlas model, i.e. the case where  $a_j = \mathbf{1}(j=0), \ b_j \equiv 1$ for $j =0, 1, \cdots, m$, these criteria give unique strong solutions for \eqref{rankbd}.
Further, under very general stability and nondegeneracy conditions on the drift and diffusion coefficients $\{a_j\}, \{b_j\}$, the RBM describing gaps between the ranked particles has a unique stationary measure \cite{harrison1987brownian}. Geometric ergodicity results can be found in \cite{budlee} and rates of convergence to stationarity, depending explicitly on $\{a_j\}$, $\{b_j\}$ have recently been obtained \cite{BanBudh,banerjee2020dimension}.

Much less is known for the case $m=\infty$. Owing to the non-standard structure of the drift and diffusion coefficients, the infinite-dimensional rank-based diffusions are technically much more challenging to analyze and even some basic questions like uniqueness in law of the reflected process describing gaps between the ranked particles remain open in full generality. Moreover, unlike its finite-dimensional analogue, the gap process for infinite rank-based diffusions can have multiple stationary measures (like the model considered below) and the domains of attraction of these stationary measures are far from being \textcolor{black}{well-understood}. \textcolor{black}{See \cite{AS,DJO, dembo2017equilibrium, cabezas2019brownian} and 
references therein} for comprehensive surveys of the known results in the $m=\infty$ case. 

In this article, we study the long time behavior of the infinite Atlas model which, like its finite-dimensional analogue, is a rank-based diffusion where, at any point of time, the lowest particle has a unit upward drift and the remaining particles perform independent standard Brownian motions. This is a special case of the $\mathbb{R}^{\infty}$-valued process $\{Y_i(\cdot) : i \in \mathbb{N}_0\}$ defined in \eqref{rankbd} with $m = \infty$ and $a_j = \mathbf{1}(j=0), \ b_j \equiv 1$ for $j \in \mathbb{N}_0$.
Hence, $\{Y_i(\cdot) : i \in \mathbb{N}_0\}$ satisfies the following system of SDE:
\begin{equation}\label{Atlasdef}
dY_i(s) = \mathbf{1}(Y_i(s) = Y_{(0)}(s))ds + d W_i(s), \ s \ge 0,  i \in \mathbb{N}_0.
\end{equation}
It was shown in \cite[Theorem 3.2]{AS} that, if the starting points of the infinite Atlas model almost surely satisfy
\begin{equation}\label{S}
\sum_{i=0}^{\infty} e^{-\alpha Y_i(0)^2} < \infty, \mbox{ for all } \alpha>0,
\end{equation}
then the system \eqref{Atlasdef} has a weak solution that is unique in law. If \eqref{S} is satisfied, it can be shown that, almost surely, the system is locally finite in the sense that for any $T >0$, $u>0$, there exist only finitely many $i \in \mathbb{N}_0$ such that $\min_{s \in [0,T]} Y_i(s) \le u$ \cite[Lemma 3.4]{AS}. Thus, almost surely, the ranking $(Y_0(t),Y_1(t),\dots) \mapsto (Y_{(0)}(t),Y_{(1)}(t),\dots)$ is well defined for all $t \ge 0$ by prescribing the following convention for ties: if there exist $t \ge 0$ and $i<j$ such that $Y_i(t) =  Y_j(t)$, we assign a lower rank to $Y_i$ and higher rank to $Y_j$ at time $t$. We will assume throughout this article that $0 \le Y_{0}(0) \le Y_{1}(0) \le Y_{2}(0) \le \dots$.


By \cite[Lemma 3.5]{AS}, the processes defined by
\begin{equation}\label{eq:bistar}
B^*_i(t) := \sum_{j=0}^{\infty}\int_0^t\mathbf{1}(Y_j(s) = Y_{(i)}(s)) dW_j(s), \ i \in \mathbb{N}_0, t \ge 0,
\end{equation}
are independent standard Brownian motions which can be used to write down the following stochastic differential equation for $\{Y_{(i)}(\cdot) : i \in \mathbb{N}_0\}$:
\begin{equation}\label{order_SDE}
dY_{(i)}(t) = \mathbf{1}(i=0) dt + dB^*_i(t) - \frac{1}{2}dL^*_{i+1}(t) + \frac{1}{2} dL^*_{i}(t), \ t \ge 0, i \in \mathbb{N}_0.
\end{equation}
Here, $L^*_{0}(\cdot) \equiv 0$ and for $i \in \mathbb{N}$, $L^*_i(\cdot)$ denotes the local time of collision between the $(i-1)$-th and $i$-th particles, that is, the unique non-decreasing continuous process satisfying $L^*_i(0)=0$ and $L^*_i(t) = \int_0^t\mathbf{1}\left(Y_{(i-1)}(s)=Y_{(i)}(s)\right)dL^*_i(s)$ for all $t \ge 0$.
The gap process in the infinite Atlas model is the $\R_+^{\infty}$-valued process $\Z(\cdot)=$ $(Z_1(\cdot), Z_2(\cdot), \dots)$ defined by
\begin{equation}\label{gapdef}
Z_i(\cdot) := Y_{(i)}(\cdot) - Y_{(i-1)}(\cdot), \ i \in \mathbb{N}.
\end{equation}
We will be primarily interested in the long time behavior of the gap process. Thus, without loss of generality, we will assume $Y_{(0)}(0)=0$. Denote by $\cls_0$ the class of probability measures on $\RR_+^{\infty}$ such that the corresponding $\RR_+^{\infty}$-valued random variable $\YY(0) = (Y_i(0))_{i\in \NN_0}$ satisfies \eqref{S} and 
$$0 = Y_0(0)\le Y_1(0) \le \cdots \mbox{ a.s. }$$
Given a measure $\gamma \in \cls_0$, from weak existence and uniqueness, we can construct a filtered probability space $(\Om, \clf, \Pmb, \{\clf_t\}_{t\ge 0})$ on which are given mutually independent $\clf_t$-Brownian motions $\{W_i(\cdot), i \in \NN_0\}$ and $\clf_t$ adapted continuous processes $\{Y_i(t), t \ge 0, i \in \NN_0\}$ such that $Y_i$ solve \eqref{Atlasdef} with $\Pmb\circ\YY(0)^{-1} = \gamma$.
On this space the processes $Y_{(i)}$ and $Z_i$ are well defined and the distribution $\Pmb\circ \Z(\cdot)^{-1}$ is uniquely determined from $\gamma$. Furthermore $\Z$ is a Markov process with values in (a subset of) $\RR_+^{\infty}$. Let
$$\cls = \{\Pmb\circ \Z(0)^{-1}: \Pmb \circ \YY(0)^{-1} \in \cls_0\}.$$
Note that $\mu \in \mathcal{S}$ if for some $\RR_+^{\infty}$-valued random variable $\YY(0)$ with probability law in $\cls_0$,  the vector \textcolor{black}{$(Y_1(0) - Y_0(0), Y_2(0) - Y_1(0),\dots)$} has  probability law $\mu$.

It was shown in \cite{PP} that the distribution $\pi := \otimes_{i=1}^{\infty} \operatorname{Exp}(2)$ (which is clearly an element of $\cls$) is a stationary distribution of the gap process $\Z(\cdot)$. It was also conjectured there that this is the unique stationary distribution of the gap process. Surprisingly, this was shown to be not true in \cite{sarantsev2017stationary} who gave an uncountable collection of stationary distributions in $\cls$ for the gaps in the infinite Atlas model defined as
\begin{equation}\label{altdef}
\pi_a := \bigotimes_{i=1}^{\infty} \operatorname{Exp}(2 + ia), \ a \ge 0.
\end{equation}
\textcolor{black}{The `maximal' stationary distribution $\pi_0$ (in the sense of stochastic domination) is somewhat special in this collection as described below. For this reason, in what follows, we will continue using the notation $\pi$ for the measure $\pi_0$.}

For a $\nu \in \cls$, we will denote by $\hat \nu_t$ the probability law of $\Z(t)$, when $P\circ \Z(0)^{-1} = \nu$. Obviously, when $\nu = \pi_a$, $\hat\nu_t  = \pi_a$ for all $t\ge 0$. In general, one expects that under suitable conditions on $\nu \in \cls$, $\hat \nu_t$ converges (say in the weak convergence topology on the space $\clp(\RR_+^{\infty})$ of probability measures on $\RR_+^{\infty}$) to one of the stationary measures $\pi_a$, $a\ge 0$. When that happens (i.e. $\hat \nu_t \to \pi_a$ weakly as $t\to \infty$ for some $a\ge 0$), we say that $\nu$ is in the {\em Domain of Attraction} (DoA) of $\pi_a$.
Characterizing the domain of attraction of the above collection of stationary measures has been a longstanding open problem. \textcolor{black}{It was shown in \cite[Theorem 4.7]{AS} using comparison techniques with finite-dimensional  Atlas models that if the law $\nu$ of the  initial gaps $\Z(0)$ stochastically dominates $\pi = \pi_0$ (in a coordinate-wise sense), then $\nu$ is in DoA of $\pi$, i.e. the law of $\Z(t)$ converges weakly to $\pi$ as $t \rightarrow \infty$ (see also Lemma \ref{AS4.7}).} Recently, \cite{DJO} established a significantly larger domain of attraction for $\pi$ using relative entropy and Dirichlet form techniques. They showed that a $\nu \in \cls$ is in DoA of $\pi$ if the random vector $\Z(0)$ with distribution $\nu$ almost surely satisfies the following conditions for some $\beta \in [1,2)$ and eventually non-decreasing sequence $\{\theta(m) : m \ge 1\}$ with $\inf_m\{\theta(m-1)/\theta(m)\}>0$:
\begin{align}
\label{d1} &\limsup_{m \rightarrow \infty}\frac{1}{m^{\beta}\theta(m)} \sum_{j=1}^m Z_j(0) < \infty,\\
\label{d2} &\limsup_{m \rightarrow \infty}\frac{1}{m^{\beta}\theta(m)} \sum_{j=1}^m (\log Z_j(0))_- < \infty,\\ 
\label{d3} &\liminf_{m \rightarrow \infty}\frac{1}{m^{\beta^2/(1+\beta)}\theta(m)} \sum_{j=1}^m Z_j(0) = \infty,
\end{align}
with the additional requirement that $\theta(m) \ge \log m, m \ge 1,$ if $\beta = 1$. This is a big leap in our understanding of the domain of attraction properties of the infinite Atlas model. However, the conditions \eqref{d1}-\eqref{d3} involve upper and lower bounding the growth rate of the starting points $Y_m(0) = \sum_{j=1}^m Z_j(0)$ with respect to $m$ in terms of a common parameter $\beta \in [1,2)$, which partially restricts its applicability. In particular, they do not cover all initial gap distributions that stochastically dominate $\pi$ (e.g. $Z_j(0)\sim e^{j^2}$), which were shown to be in DoA of $\pi$ in \cite{AS}. Moreover, \eqref{d2} is not satisfied  if even one of the gaps is zero, which intuitively should not drastically influence ergodic properties of the model. This condition arises from the relative entropy methods used in \cite{DJO} (see e.g. the estimate  (3.5) therein) which make an important use of the fact that  the gaps are non-zero.

Beyond the results presented above, little is known about the domain of attraction of stationary measures of the infinite Atlas model. Especially, for the other stationary measures $\pi_a, a >0$, nothing is known about the domain of attraction. To investigate the latter question, the techniques of \cite{DJO} can no longer be applied in a straightforward manner, as we now explain, and one needs new ideas. The methods in \cite{DJO} involve approximating the infinite Atlas model by the finite Atlas model with $d+1$ particles, given by the solution to the SDE \eqref{Atlasdef} with $\NN_0$ replaced with $ \{0, 1, \cdots , d\}$.
A key step in the proof is to argue that the law of the first $k$ gaps, as $t\to \infty$ and $d\to \infty$ simultaneously, in a suitable fashion, converges to the $k$-marginal of $\pi$ (i.e.  $\otimes_{i=1}^{k} \operatorname{Exp}(2)$).  It is then shown by a natural coupling argument that, on any compact time interval $[0,T]$, there is a $d_T \in \NN$ such that the ranked particles in the $d_T$-dimensional Atlas model stay uniformly close to the lowest $d_T+1$ particles in the infinite Atlas model on $[0,T]$.
A crucial fact that is exploited in the proof is that the {\em unique} stationary measure of the finite Atlas model with $d+1$ particles, given by $\pi^{(d)} := \bigotimes_{i=1}^{d} \operatorname{Exp}(2(1-i/(d+1)))$, converges to $\pi$ as $d \rightarrow \infty$. Since the finite Atlas model has a unique stationary distribution, a finite-dimensional approximation approach of the form used in \cite{DJO}  cannot work in addressing convergence to $\pi_a$ for $a \neq 0,$ as this approximation is {\em designed} to select $\pi_0$. 
One may consider other types of finite-dimensional approximations to the infinite Atlas model (see Remark \ref{rem:difffd}), however they present different challenges 
 with implementing the  approach as in \cite{DJO} for showing convergence to $\pi_a$ for $a\neq 0$. These are further discussed in Remark \ref{rem:difffd}.

In the current work, we consider a somewhat weaker formulation of domain of attraction of the stationary measures $\pi_a$, $a \ge 0$.
Specifically, for $\nu \in \cls$, define for $t>0$
$$\nu_t \doteq \frac{1}{t} \int_0^t \hat \nu_s ds,$$
where $\hat \nu_t$ is as introduced in the paragraph below \eqref{altdef}.
We say that a $\nu \in \cls$ is in the {\em Weak  Domain of Attraction} (WDoA) of $\pi_a$ for $a\ge 0$, if $\nu_t \to \pi_a$ weakly as $t\to \infty$. \textcolor{black}{Note that, although the convergence of the time-averaged laws $\nu_t$  to $\pi_a$ (ergodic limit) is implied by the convergence of $\hat \nu_t$ to $\pi_a$ (marginal time limit), the converse is not clear.}
\textcolor{black}{However, if $\nu \in \cls$ is in the WDoA of $\pi_a$ for some $a>0$, it is \emph{not} in the DoA of $\pi=\pi_0$.} In this article we will provide sufficient conditions for a measure $\nu \in \cls$ to be in the WDoA of $\pi_a$ for a general $a\ge 0$.

In Theorem \ref{piconv} and Corollary \ref{starcheck}, we describe a large set of measures $\nu \in \cls$ that are  in the weak domain of attraction of $\pi = \pi_0$. The sufficient condition for $\nu$ to be in the WDoA of $\pi$ is a condition similar to \eqref{d3} with $\beta =1$ (see \eqref{star}). In particular, the sufficient condition in  \eqref{star} is implied by \eqref{d3} if the initial gap sequence $\{Z_j(0)\}$ satisfies $\limsup_{n\to \infty} Z_n(0)<\infty$ a.s. (see Remark \ref{remthm1}). We do not require upper bounds on growth rates of $m \mapsto Y_m(0) = \sum_{j=1}^m Z_j(0)$ or $m \mapsto \sum_{j=1}^m (\log Z_j(0))_-$ of the form in \eqref{d1}, \eqref{d2}.
In particular, unlike the condition in \eqref{d2}, the sufficient condition we provide does not require $Z_j(0)$ to be non-zero. Furthermore, the sufficient condition in \eqref{star} is satisfied by all gap distributions which stochastically dominate $\pi$. In Corollary \ref{starcheck} and Remark \ref{remthm1}, we provide additional examples where the sufficient condition \eqref{star} is satisfied and make some comparisons with the sufficient conditions given in \cite{DJO}.

In Theorem \ref{piaconv} and Corollary \ref{staracheck}, we provide sufficient conditions for $\nu \in \cls$ to be 
in the weak domain of attraction of the alternate stationary measures $\pi_a, a >0$. This condition 
is formulated in terms of  growth rates of the $L^1$ distance between the first $m$ coordinates of the initial gap process and that of a coupled random variable $\V_a$, with distribution $\pi_a$, as $m \rightarrow \infty$. In particular, this sufficient condition holds if the initial gap sequence is given as a perturbation $\mathbf{\Theta} = (\Theta_i)_{i\in \NN}$ of $\V_a$ which satisfies
\begin{equation}\label{condloglog}
	\sum_{i=1}^d |\Theta_i| = o\left( \frac{\log d}{\log \log d}\right) \mbox{ and } \limsup_{d\to \infty} \frac{|\Theta_d| }{dV_{a,d}} <\infty.
\end{equation}
Note that, for $a, a'>0$, the $L^1$-distance between the first $d$ coordinates of the stationary measures $\pi_a$ and $\pi_{a'}$ grows at rate $O(|a-a'| \log d)$ as $d \rightarrow \infty$ and thus the size of the perturbation allowed in \eqref{condloglog} is not very far from what one expects. See Remark \ref{remthm2} for further discussion of this point.
To the best of our knowledge, this is the first result on convergence properties for these alternate stationary measures.
In Corollary \ref{staracheck} and Remark \ref{remthm2} we provide some examples  of measures $\nu \in \cls$ in WDoA of $\pi_{a}$ for $a>0$.

Our proofs are based on a pathwise approach to studying the long time behavior of the infinite Atlas model using \emph{synchronous couplings}, namely, two versions of the infinite Atlas model described in terms of the ordered particles via \eqref{order_SDE} and started from different initial conditions but driven by the same collection of Brownian motions. Central ingredients in the proofs are suitable estimates on the decay rate of the $L^1$ distance between the gaps in synchronously coupled  ordered infinite Atlas models. These estimates are obtained by analyzing certain excursions 
of the difference of the coupled processes where each excursion ensures the contraction of the $L^1$ distance by a fixed deterministic amount. The key is to appropriately control the number and the lengths of such excursions. Certain monotonicity properties of synchronous couplings (see Proposition \ref{syncprop}), and a quantification of the influence of far away coordinates on the first few gaps (see Section \ref{farsec}), also play a crucial role. Using these tools, the discrepancy between the gap processes of two synchronously coupled infinite Atlas models can be controlled in terms of associated gap processes when the starting configurations differ only in finitely many coordinates. 
The latter are more convenient to work with as for them the 
initial $L^1$ distance between the  gap processes is finite, and the aforementioned excursion analysis can be applied to obtain our main results.

This article is organized as follows. In Section \ref{main}, we state the main results. In Section \ref{syncsec}, we collect some crucial monotonicity properties of synchronous couplings including the monotonicity and quantitative control of the $L^1$ distance between them. In Section \ref{farsec}, we quantify the influence of far away coordinates on the first $k$ gaps of the infinite Atlas model. This is crucial in reducing the problem of convergence to stationarity from arbitrary starting gap configurations to those that are perturbations of the stationary gaps at finitely many coordinates. In Section \ref{excsec}, we identify excursions in the paths of the synchronously coupled gap processes that result in reductions of the $L^1$ distance between them by fixed amounts. Finally, in Section \ref{mainproofs}, the main results are proved by analyzing these excursions of synchronously coupled gap processes from carefully chosen starting configurations.

\textbf{Notation: } For a vector $\vv = (v_1,v_2,\dots)^T \in \mathbb{R}_+^{\infty}$ and $m \in \mathbb{N}$, we will write $\vv \vert_m := (v_1,\dots,v_{m})^T$. 
We will write $s(\vv)$ for the new vector in $\mathbb{R}_+^{\infty}$ whose $i$-th coordinate is $v_1 + \dots + v_i$, for $i \in \mathbb{N}$. For two vectors $\mathbf{v}_1$ and $\mathbf{v}_2$ in $\mathbb{R}_+^{m}$, where $m \in \mathbb{N} \cup \{\infty\}$, $\mathbf{v}_1\le \mathbf{v}_2$ will be used to denote that each coordinate of $\mathbf{v}_1$ is less than or equal to the corresponding coordinate of $\mathbf{v}_2$. Similarly, $\mathbf{v}_1 \wedge \mathbf{v}_2$ and $\mathbf{v}_1 \vee \mathbf{v}_2$ will respectively denote the coordinate-wise minimum and maximum. 

\textcolor{black}{We will denote by $C([0,\infty):\RR_+^{\infty})$ the space of all continuous $\RR_+^{\infty}$-valued functions defined on $[0,\infty)$. \textcolor{black}{This space is equipped with the usual local uniform topology, namely a sequence} $f^{(m)} \in C([0,\infty):\RR_+^{\infty})$ converges to $f \in C([0,\infty):\RR_+^{\infty})$ if 
$$\lim_{m \rightarrow \infty} \sup_{t \in [0,T]}|f^{(m)}_j(t) - f_j(t)| =0 \mbox{ for any } T \in (0,\infty) \mbox{ and } j \in \NN.$$}

 In the sequel, for a collection of events $\{E_t : t \in [0,\infty)\}$, we will often write `$E_t$ holds for all $t \ge 0$' to mean `almost surely, $E_t$ holds for all $t \ge 0$'. $\Phi(\cdot)$ will denote the normal cdf and $\bar{\Phi}(\cdot) := 1- \Phi(\cdot)$. We will write $X \sim \mu$ to denote that the random variable $X$ has distribution $\mu$. For any measure $\nu$ on $\mathbb{R}_+^{\infty}$ and any $k \in \mathbb{N}$, we call the measure $\nu^{(k)}$ on $\mathbb{R}_+^{k}$ defined by $\nu^{(k)}(A) := \nu(A \times \mathbb{R}_+^{\infty}), \ A \in \mathcal{B}\left(\mathbb{R}_+^k\right),$ as the \emph{$k$-marginal} of $\nu$.
By a coupling of two probability measures $\theta_1$ and $\theta_2$ on some Polish space $S$, we mean $S$-valued random variables $X_1$, $X_2$ on some probability space $(\Om, \clf, \Pmb)$ such that $\Pmb\circ X_i^{-1} = \theta_i$ for $i=1,2$. Coupling of more than two probability measures is defined in a similar manner.
For $m \in \mathbb{N} \cup \{\infty\}$ and probability measures $\theta_1$ and $\theta_2$ on $\RR_+^m$ we say $\theta_2$ stochastically dominates $\theta_1$ if
$\theta_2(-\infty, \mathbf{x}] \le \theta_1(-\infty, \mathbf{x}]$ for all $\mathbf{x} \in \mathbb{R}^m_+$.
In this case, by Strassen's theorem \cite{lindvall1999strassen}, there is a coupling $(X, Y)$ of $(\theta_1, \theta_2)$ such that $X_i \le Y_i$ a.s. for all $i$.

\section{Main results}\label{main}
Our first theorem gives a sufficient condition for a measure on $\RR_+^{\infty}$ to be in the WDoA of $\pi$.
 \begin{theorem}\label{piconv}
 Suppose that the probability measure
 $\mu$ on $\RR_+^{\infty}$ satisfies the following: there exists a coupling $(\U,\V)$ of $\mu$ and $\pi$ such that, almost surely,
 \begin{equation}\label{star}
 \liminf_{d \rightarrow \infty} \frac{1}{\sqrt{d} (\log d)} \sum_{i=1}^d U_i \wedge V_i = \infty.
 \end{equation}
 Then $\mu \in \cls$ and it belongs to the WDoA of $\pi$.
 \end{theorem}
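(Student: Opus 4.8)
I would first dispose of the membership claim $\mu\in\cls$. Taking partial sums $Y_i(0):=\sum_{j=1}^i U_j=s(\U)_i$ of the first marginal of the coupling $(\U,\V)$ and using $0\le U_j\wedge V_j\le U_j$, condition \eqref{star} gives $Y_d(0)\ge\sum_{i=1}^d(U_i\wedge V_i)\ge M\sqrt d\,\log d$ for all large $d$, a.s., for every $M>0$; hence $Y_d(0)^2/d\to\infty$ and $\sum_d e^{-\alpha Y_d(0)^2}<\infty$ a.s. for every $\alpha>0$, which is \eqref{S}. Together with $0=Y_0(0)\le Y_1(0)\le\cdots$ (the coordinates of $\U$ are nonnegative), this puts the law of $(Y_i(0))_{i\in\NN_0}$ in $\cls_0$, so $\mu\in\cls$ and the occupancy measures $\mu_t$ are well defined. (Consistency check: if $\mu$ stochastically dominates $\pi$, Strassen gives a coupling with $U_i\ge V_i$, so $U_i\wedge V_i=V_i$ and $\sum_{i\le d}(U_i\wedge V_i)\sim d/2$ by the law of large numbers, so \eqref{star} holds, recovering the result of \cite{AS} and Lemma \ref{AS4.7} that such laws lie in the domain of attraction of $\pi$.)

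Next I would reduce to a single coordinate block. Since $\pi=\bigotimes_i\operatorname{Exp}(2)$ is a product measure and weak convergence on $\RR_+^\infty$ means convergence of all finite-dimensional marginals, it suffices to fix $k\in\NN$ and show $\mu_t^{(k)}\to\pi^{(k)}:=\bigotimes_{i=1}^k\operatorname{Exp}(2)$. I would construct on one probability space a synchronous coupling (Section \ref{syncsec}): solve the ordered SDE \eqref{order_SDE} twice from initial configurations $s(\U)$ and $s(\V)$, with $\U\sim\mu$, $\V\sim\pi$ coupled as in the hypothesis, both driven by the same Brownian motions; call the resulting gap processes $\Z^{\U}$ and $\Z^{\V}$, so that $\Z^{\V}(t)\sim\pi$ for all $t$ by stationarity. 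Testing against bounded Lipschitz functions, the theorem reduces to showing
\[
\frac1t\int_0^t\Expect{1\wedge\sum_{i=1}^k|Z^{\U}_i(s)-Z^{\V}_i(s)|}\,ds\ \longrightarrow\ 0\qquad(t\to\infty).
\]

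The main work is to control this discrepancy, and the obstruction is that the full $L^1$ distance $\sum_{i\ge1}|Z^{\U}_i(s)-Z^{\V}_i(s)|$ is typically infinite at $s=0$. The plan is to reduce to finite perturbations. For $n\in\NN$ let $\U^{(n)}$ agree with $\U$ on coordinates $\le n$ and with $\V$ beyond; then $\sum_{i\ge1}|U^{(n)}_i-V_i|=\sum_{i=1}^n|U_i-V_i|<\infty$, while $\U^{(n)}\to\U$ coordinatewise. By the far-coordinate estimates of Section \ref{farsec}, the first $k$ gaps of $\Z^{\U^{(n)}}$ converge to those of $\Z^{\U}$; crucially, since \eqref{star} forces the $m$-th initial particle to sit at height $\gtrsim\sqrt m\,\log m$, a far-away particle cannot reach the bottom before a large time, so this comparison can be made \emph{uniformly in time}. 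Next, the monotonicity of synchronous couplings (Proposition \ref{syncprop}) sandwiches $\Z^{\U^{(n)}}$ and $\Z^{\V}$ between the copies started from $\U^{(n)}\wedge\V$ and $\U^{(n)}\vee\V$, yielding
\[
\sum_{i=1}^k|Z^{\U^{(n)}}_i(s)-Z^{\V}_i(s)|\ \le\ \sum_{i\ge1}\bigl(Z^{\U^{(n)}\vee\V}_i(s)-Z^{\V}_i(s)\bigr)+\sum_{i\ge1}\bigl(Z^{\V}_i(s)-Z^{\U^{(n)}\wedge\V}_i(s)\bigr),
\]
in which each summand on the right is the $L^1$ distance between $\Z^{\V}$ and a copy started from a finite, ordered perturbation of $\V$, with finite initial value.

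Finally, for each such pair I would invoke the excursion analysis of Section \ref{excsec}: the relevant $L^1$ distance $D(s)$ is nonincreasing except during certain excursions of the coupled difference, and each completed excursion contracts $D$ by a fixed deterministic amount, so that $O(D(0))$ excursions bring $D$ below any prescribed level; the total time consumed by these excursions is controlled through the collision local times, whose growth rate is dictated by how spread out the two configurations are. Condition \eqref{star} is precisely the hypothesis that this spread --- measured by the partial sums $\sum_{i\le d}(U_i\wedge V_i)$ dominating $\sqrt d\,\log d$ --- is large enough for the excursion-length bound to close, which gives $\frac1t\int_0^t\Expect{1\wedge D(s)}\,ds\to0$; sending $n\to\infty$ afterwards and combining with the previous paragraph finishes the proof. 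I expect the hardest part to be exactly this last step: obtaining excursion-count and, above all, excursion-\emph{length} estimates sharp enough to survive the time average and to match the exact form of \eqref{star}, together with the subsidiary task of interchanging the $n\to\infty$ (finite-coordinate approximation) and $t\to\infty$ (time average) limits, which is what forces the time-uniform far-coordinate control used above.
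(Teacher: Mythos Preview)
Your architecture---sandwich via $\U\wedge\V$ and $\U\vee\V$, reduce to finite perturbations of $\V$, then apply the excursion analysis of Section~\ref{excsec} together with the far-coordinate estimates of Section~\ref{farsec}---is the paper's. Two points of execution differ. First, the paper exploits an asymmetry you pass over: since $\U\vee\V\ge\V$, the upper sandwich is handled immediately by the known result of \cite{AS} (Lemma~\ref{AS4.7}), with no truncation or excursion analysis; only the lower piece $\U\wedge\V$ needs the machinery. The paper therefore sandwiches \emph{first} and truncates only the lower piece, setting $\U_{(d)}$ to agree with $\U\wedge\V$ on the first $d$ coordinates and with $\V$ beyond (Lemma~\ref{lesslem}); your order (truncate $\U$, then sandwich) doubles the work.

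Second, your description of the excursion step is slightly off and the limit interchange you flag cannot be done in the order you write. The $L^1$ distance $D(s)$ is \emph{always} nonincreasing (Lemma~\ref{l1control}); the excursions are defined by $\Delta Z_k$ crossing level $\epsilon$, and each completed one forces $D$ to drop by $\epsilon/2^k$ (Lemma~\ref{l1dec}). The output is a bound on the \emph{fraction of time} that $|\Delta Z_k|>\epsilon$, not on $D(s)$ itself, and $D(s)\to0$ is never shown---so your displayed inequality bounding $\sum_{i\le k}|\Delta Z_i|$ by the full $L^1$ distance does not close; one controls each coordinate $j\le k$ separately. More importantly, for a \emph{fixed} truncation level $n$ the far-coordinate comparison between $\Z^{\U}$ and $\Z^{\U^{(n)}}$ holds only on $[0,\sim n\log n]$ (Lemma~\ref{farpi}), so one cannot send $t\to\infty$ first and $n\to\infty$ afterwards. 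The paper resolves this by letting the truncation grow with time, $d=d(t)\sim \delta t/\log t$: on $[0,t]$ the number of excursions is $O(d(t))$ (since $D(0)\le\sum_{i\le d(t)}V_i$), each has length $O(\log t)$ by Lemma~\ref{exclength}, so the bad-time fraction is $O(1/\vartheta)$ for a free parameter $\vartheta$; simultaneously, $d(t)$ is exactly the scale at which Lemma~\ref{farpi} guarantees the far-coordinate comparison on $[0,t]$. This diagonal choice is what replaces your ``interchange of limits''.
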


 The following corollary gives some natural situations in which \eqref{star} holds.
 
 \begin{corollary}\label{starcheck}
Let $\mu$ be a probability measure on $\RR_+^{\infty}$ and
  $\U \sim \mu$. Suppose one of the following conditions hold:
 \begin{itemize}
 \item[(i)] $\mu$ is stochastically dominated by $\pi$ and, almost surely,
 \begin{equation}\label{star1}
 \liminf_{d \rightarrow \infty} \frac{1}{\sqrt{d} (\log d)} \sum_{i=1}^d U_i = \infty.
 \end{equation}
 \item[(ii)] For some a.s. finite random variable $M$
 \begin{equation}\label{star2}
 \liminf_{d \rightarrow \infty} \frac{1}{\sqrt{d} (\log d)} \sum_{i=1}^d (U_i \wedge M) = \infty, \mbox{ a.s. }
 \end{equation}
 \item[(iii)] $U_i = \lambda_i \Theta_i, \ i \in \mathbb{N}$, where $\{\Theta_i\}$ are iid non-negative random variables satisfying $\mathbb{P}(\Theta_1>0)>0$, and \textcolor{black}{$\{\lambda_i\}$ are positive deterministic real numbers satisfying one of the following: 
 \begin{itemize}
 \item[(a)] $\liminf_{j \rightarrow \infty} \lambda_j>0$,\\
 \item[(b)] $\limsup_{j \rightarrow \infty}\lambda_j < \infty$ and
 \begin{equation}\label{star3}
 \liminf_{d \rightarrow \infty}\frac{1}{\sqrt{d} (\log d)} \sum_{i=1}^d \lambda_i = \infty.
 \end{equation}
 \end{itemize}}
 \end{itemize}
 Then \eqref{star} holds. Hence, in all the above cases, $\mu \in \cls$ and is in the WDoA of $\pi$.

 \end{corollary}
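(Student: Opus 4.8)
The plan is to verify the single hypothesis \eqref{star} of Theorem \ref{piconv} in each of the three cases; once \eqref{star} holds, the assertions $\mu\in\cls$ and that $\mu$ lies in the WDoA of $\pi$ follow at once from that theorem. Case (i) is immediate: since $\mu$ is stochastically dominated by $\pi$, Strassen's theorem provides a coupling $(\U,\V)$ of $(\mu,\pi)$ with $U_i\le V_i$ a.s.\ for every $i$, hence $U_i\wedge V_i=U_i$ and \eqref{star} becomes verbatim the assumed \eqref{star1}.

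Case (ii) is the workhorse, and (iii) will be reduced to it. I would realize the coupling on an enlarged probability space carrying $\U$ and $M$ together with a \emph{independent} $\V\sim\pi$ (so the $V_i$ are i.i.d.\ $\operatorname{Exp}(2)$, independent of $(\U,M)$; note \eqref{star2} forces $M\ge 0$ a.s., since otherwise $\sum_{i=1}^d U_i\wedge M\le dM$). The elementary pointwise bound $U_i\wedge V_i\ge (U_i\wedge M)\,\one(V_i\ge M)$ — checked by splitting into $V_i\ge M$ and $V_i< M$ and using $U_i,V_i\ge 0$ — reduces the claim to a weighted strong law. Write $W_d:=\sum_{i=1}^d U_i\wedge M$, so by hypothesis $W_d/(\sqrt d\,\log d)\to\infty$ a.s.; conditionally on $(\U,M)$ the variables $X_i:=(U_i\wedge M)(\one(V_i\ge M)-p)$, with $p:=\mathbb{P}(V_1\ge M\mid\U,M)>0$ a.s., are independent, mean zero, and bounded by $M$. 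With the normalization $b_i:=\sqrt i\,\log(i\vee 3)$ one has $\sum_i\operatorname{Var}(X_i\mid\U,M)/b_i^2\le M^2\sum_i 1/(i(\log(i\vee 3))^2)<\infty$, so Kolmogorov's criterion together with Kronecker's lemma gives $\frac{1}{\sqrt d\,\log d}\sum_{i=1}^d X_i\to 0$ a.s.\ conditionally on $(\U,M)$, hence a.s.\ unconditionally. Therefore $\frac{1}{\sqrt d\,\log d}\sum_{i=1}^d(U_i\wedge M)\one(V_i\ge M)=p\,\frac{W_d}{\sqrt d\,\log d}+o(1)\to\infty$ a.s.\ (as $p>0$ a.s.), and \eqref{star} follows. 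The logarithmic factor in \eqref{star} is precisely what makes $\sum_i 1/(i(\log i)^2)<\infty$ and hence this argument work; a bare $\sqrt d$ normalization would not suffice.

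For case (iii) I would produce a deterministic $M$ witnessing (ii). In subcase (a), choose $c>0$ and $i_0$ with $\lambda_j\ge c$ for $j\ge i_0$; then $U_i\wedge 1\ge c\Theta_i\wedge 1$ for $i\ge i_0$, and since $\mathbb{P}(\Theta_1>0)>0$ we have $\mathbb{E}[c\Theta_1\wedge 1]>0$, so by the ordinary SLLN $\sum_{i=1}^d(U_i\wedge 1)\ge(1+o(1))\,d\,\mathbb{E}[c\Theta_1\wedge 1]\gg\sqrt d\,\log d$; thus \eqref{star2} holds with $M=1$ and case (ii) applies. In subcase (b), set $\Lambda:=\sup_j\lambda_j<\infty$, pick $\delta>0$ with $p_\delta:=\mathbb{P}(\Theta_1\ge\delta)>0$, and take $M:=\Lambda\delta$; then for every $i$, $U_i\wedge M\ge\lambda_i\delta\,\one(\Theta_i\ge\delta)$ (using $\lambda_i\delta\le M$). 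Applying the Kolmogorov-SLLN estimate of case (ii) to $\lambda_i(\one(\Theta_i\ge\delta)-p_\delta)$ with the bound $\lambda_i\le\Lambda$ gives $\frac{1}{\sqrt d\,\log d}\sum_{i=1}^d(U_i\wedge M)\ge\delta p_\delta\,\frac{\sum_{i=1}^d\lambda_i}{\sqrt d\,\log d}+o(1)\to\infty$ a.s.\ by \eqref{star3}, so \eqref{star2} holds and case (ii) again applies.

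The only real obstacle is the bookkeeping in case (ii): constructing $\V$ independent of $(\U,M)$ on a suitable enlarged space, passing from statements holding a.s.\ conditionally on $(\U,M)$ to unconditional ones, and choosing the normalization $b_i=\sqrt i\,\log i$ so that Kolmogorov's summability condition holds; the remaining steps are routine uses of the SLLN and elementary inequalities.
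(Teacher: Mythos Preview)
Your proof is correct, and case~(i) matches the paper exactly. In cases~(ii) and~(iii)(b), however, you take a genuinely different and somewhat more elementary route. The paper, in case~(ii), works directly with $\Psi_i:=V_i\wedge u_i$ under the independent coupling, computes $\mathbb{E}[\Psi_i]=\tfrac12(1-e^{-2u_i})\asymp u_i\wedge\tfrac12$, and then applies a sub-exponential (Bernstein-type) concentration inequality together with Borel--Cantelli to obtain the almost-sure growth; in case~(iii)(b) it argues analogously via Azuma--Hoeffding. You instead use the pointwise lower bound $U_i\wedge V_i\ge(U_i\wedge M)\,\one(V_i\ge M)$ and run a Kolmogorov--Kronecker SLLN argument with normalization $b_i=\sqrt{i}\log i$, the key point being that $\sum_i b_i^{-2}<\infty$; the same device then handles~(iii)(b) uniformly. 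Your approach avoids the sub-Gaussian/sub-exponential tail machinery and keeps~(ii) and~(iii)(b) under one roof, while the paper's route yields quantitative tail bounds (of order $\exp(-c(\log m)^2)$) as a by-product.
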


 \begin{remark}
	 \label{remthm1}
	 We make the following observations.
	 \begin{enumerate}[(a)]
	 	\item Suppose that $\pi$ is stochastically dominated by $\mu$. Then \eqref{star} holds and so $\mu \in \cls$ and is in the WDoA of $\pi$. \textcolor{black}{In fact in this case  \cite[Theorem 4.7]{AS} (cf. Lemma \ref{AS4.7}) shows that $\mu$ is in the DoA of $\pi$.}
		\item Note that for any $C \in (0,\infty)$, 
		$$((2C) \wedge 1) [U_i \wedge (1/2)] \le U_i \wedge C \le ((2C) \vee 1) [U_i \wedge (1/2)].$$
		 Hence, if \eqref{star2} holds for one a.s. finite random variable $M$ then it holds for every such random variable.
		 In particular if $\limsup_{n\to \infty} U_n <\infty$ a.s. and \eqref{star1} is satisfied then \eqref{star} holds and so $\mu \in \cls$ and is in the WDoA of $\pi$.
		 \item The paper \cite{DJO} notes two important settings where conditions \eqref{d1}-\eqref{d3} are satisfied. These are: (i) for some $c \in [1,\infty)$, $\lambda_j \in [c^{-1}, c]$ for all $j\in \NN$ and 
		 $ Z_j(0) = \lambda_j \Theta_j$ where $\Theta_j$ are iid with finite mean and such that 
		 $E\log (\Theta_1)_ {-}<\infty$, and (ii) $ Z_j(0) = \lambda_j \Theta_j$ where $\Theta_j$ are iid exponential  with mean $1$ and, either $\lambda_d\downarrow 0$ and $\frac{1}{\sqrt{d}\log d} \sum_{i=1}^d \lambda_i \to \infty$, as $d\to \infty$, or $\lambda_d\uparrow \infty$ and $\limsup_{d\to \infty}\frac{1}{d^{\beta}} \sum_{i=1}^d \lambda_i < \infty$ for some $\beta <2$. We note that conditions assumed in the above settings are substantially stronger than the one assumed in part (iii) of Corollary \ref{starcheck}. \textcolor{black}{However  \cite{DJO}  establishes the stronger  marginal time convergence as opposed to an ergodic limit considered in the present paper. See part (g) on a related open problem.}
		 \item In the setting of (iii) the sequence $\{\lambda_i\}$ satisfies \eqref{star3} if 
		   $\lambda_i \ge \varphi_i \log i/\sqrt{i}$ for sufficiently large $i$, for any non-negative sequence $\varphi_i \rightarrow \infty$ as $i \rightarrow \infty$.
		 Indeed, note that, for any $C>0$, there exists $i_C \in \mathbb{N}$ such that for all $i \ge i_C$,
		  $$
		  \sum_{j=i_C}^i \lambda_j \ge C\sum_{j=i_C}^i\frac{\log j}{\sqrt{j}} \ge C\int_{i_C}^{i+1}\frac{\log z}{\sqrt{z}}dz.
		  $$
		  \textcolor{black}{Using the fact that the primitive of $\log z/\sqrt{z}$ is $2\sqrt{z}(\log z - 2)$, $\{\lambda_j\}$ is seen to satisfy \eqref{star3}}
		  and so, if in addition $\limsup_{j \rightarrow \infty}\lambda_j < \infty$, we have that
		  the conditions of (iii) (b) are satisfied.
		 \item  We note that the conditions prescribed in Theorem \ref{piconv} and Corollary \ref{starcheck} all involve quantifying how close together the particles can be on the average in the initial configuration of the Atlas model. In particular, we do not require any 
		 upper bounds on the sizes of $U_i$.
		 This is in contrast with condition \eqref{d1} assumed in \cite{DJO} which requires the particles to be not too far apart on the average.  Intuitively one expects convergence to $\pi$ to hold when initially the particles are not too densely packed and upper bounds on the  average rate of growth of the initial spacings are somewhat unnatural. The result in \cite[Theorem 4.7]{AS} also points to this heuristic by showing weak convergence to $\pi$ from all initial gap configurations that stochastically dominate $\pi$. 
		 We also note that we do not require any condition analogous to \eqref{d2} and can, in particular, allow gaps to be zero.

		\item  We note that the conditions \eqref{d1}-\eqref{d3} of \cite{DJO} do not  imply our conditions. To see this, consider the deterministic sequence of initial gaps: $U_i = i^{-2/3}$ for $n^3 < i < (n+1)^3$ and $U_{n^3} = n$, for any $n \in \mathbb{N}$. It can be checked that, $\sum_{i=1}^d U_i$ grows like $d^{2/3}$ and $\sum_{i=1}^d (\log U_i)_-$ grows like $d\log d$ as $d \rightarrow \infty$. Thus,  \eqref{d1}-\eqref{d3} of \cite{DJO} hold with $\beta=1$ and $\theta(d)= \log d$. However, almost surely, $\sum_{i=1}^d U_i \wedge V_i$ grows like $d^{1/3}$, and therefore, \eqref{star} is violated.
		\item It will be interesting to investigate whether the condition in Theorem \ref{piconv} in fact  implies the stronger property  that $\mu$ is in the DoA of $\pi$. We leave this as an open problem.
	 \end{enumerate}
 
 \end{remark}
 
 Our second theorem provides sufficient conditions for a measure on $\RR_+^{\infty}$ to be in the WDoA of
 one of the other stationary measures $\pi_a, a>0$.
 
 \begin{theorem}\label{piaconv}
 Fix $a>0$. Suppose that the probability measure $\mu$ on $\RR_+^{\infty}$ satisfies the following:
 There exists a coupling $(\U, \V_a)$ of  $\mu$  and $\pi_a$ such that, almost surely,
\begin{equation}\label{stara}
 \limsup_{d \rightarrow \infty} \frac{\log \log d}{\log d} \sum_{i=1}^d |V_{a,i} - U_i| = 0, \mbox{ and } \limsup_{d\to \infty} \frac{U_d}{dV_{a,d}}<\infty.
 \end{equation}
 Then $\mu \in \cls$ and it belongs to the WDoA of $\pi_a$.
 \end{theorem}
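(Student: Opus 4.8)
\emph{Proof strategy.} The plan is to study a \emph{synchronous coupling} of two ordered infinite Atlas models in the sense of \eqref{order_SDE}: the first started from the ranked configuration $s(\U)$ associated with a random vector $\U\sim\mu$, the second from $s(\V_a)$ with $\V_a\sim\pi_a$, both driven by the same standard Brownian motions, with gap processes $\Z^{\U},\Z^{\V_a}$. I would show that for each $k\in\NN$ and $\eps>0$, $\frac1t\int_0^t\pr{\sum_{j=1}^k|Z^{\U}_j(s)-Z^{\V_a}_j(s)|>\eps}\,ds\to0$ as $t\to\infty$. Since $\pi_a$ is stationary, $\Z^{\V_a}(s)\sim\pi_a$ for all $s$, so for any bounded Lipschitz $f$ on $\RR_+^k$ one has $\big|\int f\,d\nu_t^{(k)}-\int f\,d\pi_a^{(k)}\big|\le\frac1t\int_0^t\Expect{\big|f(\Z^{\U}(s)\vert_k)-f(\Z^{\V_a}(s)\vert_k)\big|}\,ds$, and the right side tends to $0$ by the previous display; as $k$ is arbitrary this yields $\nu_t\to\pi_a$ weakly on $\RR_+^\infty$, i.e. $\mu$ is in the WDoA of $\pi_a$.

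\emph{Membership in $\cls$.} First I would check $\mu\in\cls$, i.e. that $Y_d(0)=\sum_{i=1}^d U_i$ satisfies \eqref{S} a.s. The $V_{a,i}\sim\operatorname{Exp}(2+ia)$ are independent with $\sum_i\operatorname{Var}(V_{a,i})<\infty$, so Kolmogorov's theorem gives $\sum_{i=1}^d V_{a,i}=\sum_{i=1}^d(2+ia)^{-1}+O(1)=\tfrac1a\log d+O(1)$ a.s.; and the first part of \eqref{stara} gives $\big|\sum_{i=1}^d(U_i-V_{a,i})\big|\le\sum_{i=1}^d|U_i-V_{a,i}|=o(\log d)$. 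Hence $Y_d(0)=(1+o(1))\tfrac1a\log d$ a.s., so $e^{-\alpha Y_d(0)^2}$ is eventually bounded by $d^{-\alpha(\log d)/(4a^2)}$, which is summable for every $\alpha>0$; together with $U_i\ge0$ (which makes $s(\U)$ nondecreasing) this gives $\mu\in\cls$.

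\emph{The $L^1$-decay estimate.} I would first prove the case in which $\U$ and $\V_a$ coincide outside finitely many coordinates, so that $D(0):=\sum_i|U_i-V_{a,i}|<\infty$. By the monotonicity properties of synchronous couplings (Proposition \ref{syncprop}), $D(t):=\sum_i|Z^{\U}_i(t)-Z^{\V_a}_i(t)|$ stays finite and is nonincreasing, and by the excursion analysis of Section \ref{excsec} — each suitable excursion of the coupled difference forces a contraction of the $L^1$ distance by a fixed deterministic amount, and the number and total length of such excursions are controllable — one obtains $\frac1t\int_0^t D(s)\,ds\to0$, which gives the required time-averaged bound above. For a general $\U$ obeying \eqref{stara} I would truncate: set $\U^{(N)}:=(U_1,\dots,U_N,V_{a,N+1},V_{a,N+2},\dots)$, a finite perturbation of $\V_a$, run its ordered Atlas model synchronously with the other two, and combine the finite-perturbation estimate for $\U^{(N)}$ versus $\V_a$ (valid for each fixed $N$) with the far-coordinate estimates of Section \ref{farsec}, which bound the influence on the first $k$ gaps of replacing $U_i$ by $V_{a,i}$ for $i>N$; here monotonicity (Proposition \ref{syncprop}) is used to sandwich $\Z^{\U}$ between the Atlas models started from $\U\wedge\U^{(N)}$ and $\U\vee\U^{(N)}$. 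Both parts of \eqref{stara} enter at this stage: $\limsup_d U_d/(dV_{a,d})<\infty$ keeps the tail coordinates of $\U$ comparable to those of $\V_a$ and thus controls the truncation error, while $\sum_{i=1}^d|V_{a,i}-U_i|=o(\log d/\log\log d)$ provides exactly the decay rate needed for the far-coordinate bounds to close, given that the heights $\sum_{i\le d}V_{a,i}\sim\tfrac1a\log d$ grow only logarithmically and the stationary gaps are small (of mean $(2+ai)^{-1}$).

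\emph{Main obstacle.} The crux is the quantitative matching of these two rates: in the finite-perturbation step the number and total length of contraction excursions have to be estimated with the precise dependence on the perturbation sizes $|U_i-V_{a,i}|$ and on the small stationary gaps, while the far-coordinate estimate must decay fast enough — as $N\to\infty$, and even as the time horizon is allowed to grow with $N$ — to absorb a tail perturbation of size $o(\log d/\log\log d)$, the $\log\log d$ factor being the critical threshold. Coordinating these, together with the correct use of the monotonicity of synchronous couplings, is where the real work lies; it is also the passage from finite perturbations to general $\U$ that forces the weaker, Cesàro-averaged conclusion (convergence of $\nu_t$) rather than convergence of $\hat\nu_t$ itself.
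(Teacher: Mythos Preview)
Your overall architecture is close to the paper's, but there is a real gap: all of the coupling tools you invoke (Proposition~\ref{syncprop}(iii), Lemma~\ref{l1control}, Lemma~\ref{l1dec}, Lemma~\ref{exclength}) are proved only for \emph{ordered} pairs, i.e.\ when one initial gap vector dominates the other coordinatewise. In particular, your claim that $D(t)=\sum_i|Z^{\U}_i(t)-Z^{\V_a}_i(t)|$ is nonincreasing is not available for general $\U,\V_a$; Lemma~\ref{l1control} needs $\tZ(0)\ge\Z(0)$. Likewise your truncation $\U^{(N)}=(U_1,\dots,U_N,V_{a,N+1},\dots)$ is not comparable to $\V_a$, so the excursion machinery of Section~\ref{excsec} does not apply to the pair $(\U^{(N)},\V_a)$, and your sandwich $\U\wedge\U^{(N)}\le\U\le\U\vee\U^{(N)}$ compares $\U$ to configurations that differ from it only in coordinates $>N$, which is the wrong direction for a ``finite perturbation'' argument. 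The paper fixes this by sandwiching \emph{both} $\U$ and $\V_a$ between $\U\wedge\V_a$ and $\U\vee\V_a$, and then running the excursion analysis on the ordered pair $\U_1=\U_{a,(d)}\le\U_2=\U\vee\V_a$, where $\U_{a,(d)}$ agrees with $\U\wedge\V_a$ on the first $d$ coordinates and with $\U\vee\V_a$ beyond; this makes $\Delta\Z(0)$ supported on $\{1,\dots,d\}$ and ordered.

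You also misidentify where the second hypothesis $\limsup_d U_d/(dV_{a,d})<\infty$ enters. It is not a ``truncation error'' control: its role is to guarantee that $\U\vee\V_a\le D\V$ for some $D\ge1$ on events of probability tending to one, where $\V\sim\pi$. This domination by a multiple of a $\pi$-sample is exactly the hypothesis of Lemma~\ref{exclength}, without which there is no bound on the excursion lengths $\sigma_{2j+2}-\sigma_{2j+1}$. The paper therefore works on the event $\mathcal{E}^{(D)}=\{\U\vee\V_a\le D\V\}$, applies Lemma~\ref{exclength} to the pair $(\U_{a,(d)}\wedge D\V,(\U\vee\V_a)\wedge D\V)$, and lets $D\to\infty$ at the end. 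The first hypothesis in \eqref{stara} is what makes the number of excursions $\N'_d$ of order $o(\log d/\log\log d)$ on the time scale $t'_d=\log d$, so that $\N'_d\cdot(\log\log d)/\log d\to0$; this is the precise matching of rates you allude to. Your $\cls$-membership argument is fine and matches Lemma~\ref{start}.
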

\textcolor{black}{
We remark that Theorem \ref{piaconv} also holds for the case $a=0$. Indeed, suppose that
the condition \eqref{stara} in the theorem holds for $a=0$. Denoting the corresponding  $\V_0$ as $\V$, we have that
 $$
 \sum_{i=1}^d U_i \wedge V_i = \sum_{i=1}^d U_i \vee V_i - \sum_{i=1}^d |V_i - U_i| \ge \sum_{i=1}^d V_i - \sum_{i=1}^d |V_i - U_i|.
 $$
 Using this and the law of large numbers for $\{V_i\}$, it follows that 
  \eqref{star} holds and thus from Theorem \ref{piconv} the conclusion of Theorem \ref{piaconv} holds with $a=0$. The reason we do not note the case $a=0$ in the statement of Theorem \ref{piaconv} is because \eqref{star} is a strictly weaker assumption than \eqref{stara} when $a=0$. This is expected as for any initial gap distribution that stochastically dominates $\pi$, convergence to $\pi$ holds (see Remark \ref{remthm1} (a)). However, for convergence to $\pi_a$ for some $a>0$, the initial gap distribution should be appropriately close to $\pi_a$ (see Remark \ref{remthm2} (b) below).}
 
 The following corollary gives a set of random initial conditions for which \eqref{stara} holds.
 
 \begin{corollary}\label{staracheck}
 Fix $a>0$. Let
 $$
 \mu := \bigotimes_{i=1}^{\infty} \operatorname{Exp}(2 + ia + \lambda_i),
 $$
 where $\{\lambda_i\}$ are real numbers satisfying $\lambda_i \ge -\beta (2+ia)$ for some $\beta <1$, and $\lambda_i \le C(2+ia)$ for some $C>0$,
for all $i$. Moreover, assume
 \begin{equation}\label{aexp}
 \limsup_{d \rightarrow \infty} \frac{\log \log d}{\log d}\sum_{i=1}^d\frac{|\lambda_i|}{i^2} = 0. 
 \end{equation}
 Then \eqref{stara} holds, and hence, $\mu \in \cls$ and belongs to the WDoA of $\pi_a$. 
 \end{corollary}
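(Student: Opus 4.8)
The plan is to build the coupling $(\U,\V_a)$ explicitly from a single family of i.i.d.\ $\operatorname{Exp}(1)$ random variables and then verify the two requirements in \eqref{stara} by elementary estimates. Concretely, on some probability space take i.i.d.\ random variables $(\xi_i)_{i\ge 1}$ with $\xi_i\sim\operatorname{Exp}(1)$ and set $U_i:=\xi_i/(2+ia+\lambda_i)$ and $V_{a,i}:=\xi_i/(2+ia)$ for $i\in\NN$. Since $\lambda_i\ge-\beta(2+ia)$ we have $2+ia+\lambda_i\ge(1-\beta)(2+ia)>0$, so $\U\sim\mu$ and $\V_a\sim\pi_a$, i.e.\ $(\U,\V_a)$ is a coupling of $\mu$ and $\pi_a$. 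With this coupling, writing $c_i:=|\lambda_i|/i^2$, one has the pointwise bound
$$|V_{a,i}-U_i|=\xi_i\,\frac{|\lambda_i|}{(2+ia)(2+ia+\lambda_i)}\le\frac{\xi_i\,|\lambda_i|}{(1-\beta)(2+ia)^2}\le\frac{1}{(1-\beta)a^2}\,c_i\,\xi_i,$$
where the second inequality uses $2+ia+\lambda_i\ge(1-\beta)(2+ia)$ and the last uses $2+ia\ge ai$. Hence the first part of \eqref{stara} will follow once we show $\frac{\log\log d}{\log d}\sum_{i=1}^d c_i\xi_i\to0$ a.s., and this is the one nontrivial point.

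For that, I would split $\sum_{i=1}^d c_i\xi_i=\sum_{i=1}^d c_i+\sum_{i=1}^d c_i(\xi_i-1)$. Multiplied by $\frac{\log\log d}{\log d}$, the first (deterministic) sum tends to $0$ by the hypothesis \eqref{aexp}. For the second, note that $M_d:=\sum_{i=1}^d c_i(\xi_i-1)$ is a martingale in $d$ with respect to the filtration generated by $(\xi_i)_{i\ge1}$, and $\mathbb{E}[M_d^2]=\sum_{i=1}^d c_i^2$. Here is where the \emph{upper} bound $\lambda_i\le C(2+ia)$ enters: together with $\lambda_i\ge-\beta(2+ia)$ it gives $|\lambda_i|\le\max(C,\beta)(2+ia)\le\max(C,\beta)(a+2)\,i$, so $c_i\le\tilde C/i$ for a constant $\tilde C$, whence $\sum_i c_i^2\le\tilde C^2\sum_i i^{-2}<\infty$. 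Thus $(M_d)$ is an $L^2$-bounded martingale, so it converges a.s.; being a.s.\ bounded and $\frac{\log\log d}{\log d}\to0$, we get $\frac{\log\log d}{\log d}M_d\to0$ a.s. Combining the two pieces yields $\frac{\log\log d}{\log d}\sum_{i=1}^d c_i\xi_i\to0$ a.s., and hence $\frac{\log\log d}{\log d}\sum_{i=1}^d|V_{a,i}-U_i|\to0$ a.s.

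The second part of \eqref{stara} is immediate with this coupling:
$$\frac{U_d}{dV_{a,d}}=\frac{1}{d}\cdot\frac{2+da}{2+da+\lambda_d}\le\frac{1}{(1-\beta)d}\to0\quad\text{as }d\to\infty,$$
so $\limsup_{d\to\infty}U_d/(dV_{a,d})=0<\infty$. Therefore $(\U,\V_a)$ satisfies both conditions in \eqref{stara}, and Theorem \ref{piaconv} gives directly that $\mu\in\cls$ and $\mu$ is in the WDoA of $\pi_a$. The only genuine obstacle in the argument is controlling the random weights $\xi_i$ in $\sum_{i\le d}c_i\xi_i$: the crude estimate $\max_{i\le d}\xi_i=O(\log d)$ a.s.\ is too weak to beat the $\frac{\log\log d}{\log d}$ normalization, and one really needs the mean/martingale decomposition above, with the $L^2$ control of the martingale part supplied precisely by the two-sided growth bound on $\{\lambda_i\}$.
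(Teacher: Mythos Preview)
Your proof is correct and follows essentially the same approach as the paper's own proof: the paper also uses the explicit scale coupling (writing $U_i=2V_i/(2+ia+\lambda_i)$ and $V_{a,i}=2V_i/(2+ia)$ with $V_i$ i.i.d.\ $\operatorname{Exp}(2)$, which is just your $\xi_i/2$), derives the same pointwise bound $|V_{a,i}-U_i|\lesssim \frac{|\lambda_i|}{i^2}\xi_i$, and then controls $\sum_{i\le d}\frac{|\lambda_i|}{i^2}\xi_i$ via exactly your mean/martingale decomposition, with the $L^2$-boundedness of the martingale part coming from $\sum_i(|\lambda_i|/i^2)^2<\infty$ via the two-sided bound on $\lambda_i$. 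The verification of the second condition in \eqref{stara} is also identical.
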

 
 \begin{remark}
	 \label{remthm2}
	 We note the following.
	 \begin{enumerate}[(a)]
		 \item Clearly, there are measures $\mu$ of the form described in Corollary \ref{staracheck} for which $\liminf_{i\to \infty}\frac{|\lambda_i|}{i}>0$ and which do not lie in the WDoA of $\pi_a$ (for example, $\pi_{a'}$ for any $a' \neq a$). However, the corollary says that if $|\lambda_i|$ grows slightly slower than $i$ then $\mu$ is indeed in the WDoA of $\pi_a$. More precisely, the convergence  in \eqref{aexp} holds in particular if $|\lambda_i| \le i \delta_i/(\log \log i)$ for sufficiently large $i$, for some non-negative sequence $\delta_i \rightarrow 0$ as $i \rightarrow \infty$. 
	Indeed,  note that for any $\delta>0$, there exists sufficiently large $i_\delta \in \mathbb{N}$ such that for $i \ge i_\delta$,
\begin{equation}\label{ac2}
\sum_{j=i_\delta}^i \frac{|\lambda_j|}{j^2} \le \sum_{j=i_\delta}^i \frac{\delta_j}{j \log \log j} \le \delta\int_{i_\delta-1}^{i} \frac{1}{z\log \log z}dz = \delta\int_{\log(i_\delta-1)}^{\log i} \frac{1}{\log w}dw.
\end{equation}
As $w \mapsto \frac{1}{\log w}$ is a slowly varying function, by Karamata's theorem \cite[Theorem 1.2.6 (a)]{mikosch1999regular}, 
$$
 \lim_{i \rightarrow \infty} \frac{\log \log i}{\log i}\int_{\log(i_0-1)}^{\log i} \frac{1}{\log w}dw = 1.
 $$
 \textcolor{black}{This can also be directly shown by noting that the primitive of $1/\log w$ is $\operatorname{Li}(w)$, and applying the l'H\^{o}pital's rule.}
 Thus, we conclude from \eqref{ac2} that for any $\delta>0$,
 $$
  \limsup_{i \rightarrow \infty} \frac{\log \log i}{\log i} \sum_{j=1}^i \frac{|\lambda_j|}{j^2} \le \delta.
 $$
As $\delta>0$ is arbitrary, \eqref{aexp} holds.

 \item Note that if $\U \sim \pi$, then for $a, a'>0$,
 $$V_{a,i} = \frac{2}{2+ia} U_i, \; V_{a',i} = \frac{2}{2+ia'} U_i, \; i \in \NN$$
 defines a coupling of $\pi_a$ and $\pi_{a'}$.
 For this coupling,
 \begin{equation}\label{eq:203}
	 \sum_{i=1}^d |V_{a,i}- V_{a',i}| \sim O(|a-a'| \log d) \mbox{ and } \limsup_{d\to \infty} \frac{V_{a', d}}{dV_{a,d}} = 0.
	 \end{equation}
Since $\pi_{a'}$ is obviously not in WDoA of $\pi_a$ for $a\neq a'$, the first
 property in  \eqref{eq:203} says that the first requirement in \eqref{stara} is not far from what one expects.
  %
  %
  We conjecture that for $\mu$ to be in the WDoA of $\pi_a$  it is necessary that $(\log d)^{-1}\sum_{i=1}^d |V_{a,i} - U_i| \rightarrow 0$ as $d \rightarrow \infty$ 
 for some coupling $(\U, \V_a)$ of  $\mu$  and $\pi_a$.  
 Theorem \ref{piaconv}  says that if the first convergence holds at a rate faster than $1/(\log \log d)$ for some coupling of $\mu$  and $\pi_a$ then that (together with the second condition in \eqref{stara}) is sufficient for 
 $\mu$ to be in the WDoA of $\pi_a$.
 
Further, observe that if the second condition in \eqref{stara} does not hold, then there are infinitely many $d \in \mathbb{N}$ such that the lowest $d+1$ particles are separated from the rest by a large initial gap. If one such gap is very large, then it could happen that the distribution of gaps between the $d+1$ particles stabilizes towards the unique stationary gap distribution $\pi^{(d)}$ of the corresponding finite Atlas model before the remaining particles have interacted with them. 
As $\pi^{(d)}$ converges weakly to $\pi$ as $d \rightarrow \infty$, one does not expect in such situations the convergence (of time averaged laws) to $\pi_a$
to hold for any $a>0$.
  \end{enumerate}
 \end{remark}
 \begin{remark}
 \label{rem:difffd}	
As noted in the Introduction, the approach of \cite{DJO} using finite-dimensional approximations does not seem to have an easy extension when investigating the DoA (or WDoA) of $\pi_a$ for $a>0$. This is because the stationary distribution of the finite-dimensional Atlas model approaches $\pi = \pi_0$ as the dimension approaches infinity. Nevertheless, there is an alternative natural finite-dimensional rank-based diffusion model  that one may consider as an approximation to the infinite Atlas model which is better suited for proving convergence to $\pi_a$ for $a > 0$. Consider the SDE with $s\ge 0$ and $i \in \{0,1, \ldots, d\}$,
\begin{equation}\label{AltdSDE}
 \small dY_i(s) = \left(1- \frac{a}{d+1}\right)\mathbf{1}(Y_i(s) = Y_{(0)}(s))ds - \sum_{j=1}^d \frac{ja}{d+1}\mathbf{1}(Y_i(s) = Y_{(j)}(s))ds + d W_i(s).
 \end{equation}
 It can be checked using the form of the generator of the above diffusion that the corresponding gap process has the unique stationary distribution $\pi^{a,(d)} := \bigotimes_{i=1}^{d} \operatorname{Exp}((2+ia)(1-i/(d+1)))$, 
 which clearly converges to $\pi_a$ as $d\to \infty$. There is also a natural analog of the right-anchored rank-based diffusion of the form in \cite[Section 2]{DJO} (see equation (2.12) therein) which is associated with the above dynamics and the stationary distribution $\pi_a$. In this 
model, the bottom $d$ particles evolve as a finite Atlas model reflected off the top particle, which moves deterministically with constant velocity $-a/2$.
 Both of these Markov processes (as in the setting of \cite{DJO}) are reversible with respect to the corresponding stationary distributions and are therefore well suited for Dirichlet form methods developed in \cite{DJO}. In particular, one can obtain an estimate analogous to \cite[Proposition 1.3] {DJO} for these finite-dimensional diffusions. However, because the drift and reflection structure of the above finite rank-based diffusions is substantially different from that of the infinite Atlas model, it is not clear how to devise comparison techniques between these diffusions and the infinite Atlas model, that are crucial to both \cite{DJO} and our work.
 
 Nevertheless, the above diffusions do capture some key qualitative features that are exhibited by the bottom $d+1$ particles in the infinite Atlas model starting from configurations close (in distribution) to $\pi_a$. In particular, as proved in \cite{tsai_stat}, if the initial gaps are distributed as $\pi_a$, the Atlas processes $\{Y_{(i)}(t) + at/2 : t \ge 0\}_{i \in \mathbb{N}_0}$ (where $Y_{(i)}(\cdot)$ satisfy \eqref{order_SDE}) form a tight collection. The drift $-aj/(d+1)$ of the $j$-th ordered particle, for $j=1,\dots,d$, in the diffusion satisfying \eqref{AltdSDE} implies an analogous property for the ordered processes in this diffusion for large $d$. To see this, observe that the center of mass of the system has drift $(d+1)^{-1}\left[\left(1- \frac{a}{d+1}\right) - \sum_{j=1}^d \frac{ja}{d+1}\right]$ which is asymptotically $-a/2$ as $d \rightarrow \infty$. A similar property is true for the anchored dynamics as the top particle moves as a linear barrier with slope $-a/2$. 
Understanding relationships between these finite-dimensional rank-based diffusions and the infinite Atlas model, particularly over long time intervals, is key to extending the techniques of \cite{DJO} to study the DoA of $\pi_a$ for $a>0$. We hope to explore this in  future work. 
 \end{remark}
 
 \section{Synchronous couplings of the infinite ordered Atlas model}\label{syncsec}
 As mentioned earlier, a synchronous coupling of the ranked particles in the infinite Atlas model refers to coupling two copies of the order statistics processes starting from different initial configurations but driven by the same set of Brownian motions via the SDE \eqref{order_SDE}. The finite-dimensional analogue of the SDE \eqref{order_SDE} is known to have a unique strong solution \cite{HR}, which implies the existence of synchronous couplings for the ranked processes of finite Atlas models starting from any two initial configurations. 
\textcolor{black}{For the infinite Atlas model, devising synchronous couplings is a more delicate issue as described below.}
 
 In \cite{AS}, a systematic method for constructing solutions of the infinite-dimensional SDE \eqref{order_SDE}, from a given collection of Brownian motions, has been devised under the name of \emph{approximative versions}. It is based on taking appropriate limits of solutions of the corresponding finite-dimensional systems. This, in turn, can be used to construct synchronous couplings in an appropriate sense. We collect some results from \cite{AS} in the following proposition.
 
 \begin{proposition}\label{syncprop}
Suppose $\mathbf{x}$ is a $\mathbb{R}^{\infty}_+$-valued random variable satisfying $0 \le x_0 \le x_1 \le x_2 \le \cdots$ and, almost surely,
\begin{equation}\label{eq:initinteg}
	\sum_{i=0}^{\infty} e^{-\alpha x_i^2} < \infty, \mbox{ for all } \alpha >0.
\end{equation} 
Consider a collection of independent standard Brownian motions $B_0(\cdot),$ $B_1(\cdot), B_2(\cdot), \dots$, independent of $\mathbf{x}$. Consider the collection of 
SDE, for $i \in \NN_0$,
 \begin{equation}\label{order_SDEX}
dX_{i}(t) = \mathbf{1}(i=0) dt + dB_i(t) - \frac{1}{2}dL_{i+1}(t) + \frac{1}{2} dL_{i}(t), \ X_i(0) = x_i, \ t \ge 0, 
\end{equation}
where $L_0(\cdot) \equiv 0$, and for $i \in \mathbb{N}$, $L_i(\cdot)$ is the associated local time of collision between the $(i-1)$-th and $i$-th particle, namely
 \begin{equation}\label{eq:loctimecond}
	 \begin{aligned}
 	L_i, X_i & \mbox{ are nonnegative continuous processes, } L_i \mbox{ are nondecreasing, } \\
	L_i(0)&=0 \mbox{ and } L_i(t) = \int_0^t\mathbf{1}\left(X_{(i-1)}(s)=X_{(i)}(s)\right)dL_i(s) \mbox{ for all } t \ge 0.
	\end{aligned}
 \end{equation}
 Also consider for fixed $m \in \NN$, the system of SDE in \eqref{order_SDEX} for $i=0, 1, \ldots, m$, with 
 starting configuration $X_i(0) = x_i$, $0 \le i \le m$, and with $L_i$ satisfying 
\eqref {eq:loctimecond} for $1\le i \le m$ and $L_{0}(\cdot) \equiv L_{m+1}(\cdot) \equiv 0$.
Denote by 
 $\XX^{(m)}(\cdot) =(X^{(m)}_0(\cdot),\dots, X^{(m)}_m(\cdot))$ and $\Lb^{(m)}(\cdot) =(L^{(m)}_0(\cdot),\dots, L^{(m)}_m(\cdot))$
 the unique strong solution to this finite-dimensional system of reflected SDE which we call the `finite ordered Atlas model with $m+1$ particles' with driving Brownian motion $(B_0(\cdot),\dots,B_m(\cdot))$. Then, the following hold.
 \begin{itemize}
 \item[(i)] There exist continuous $\RR_+^{\infty}$-valued processes $\XX(\cdot) := (X_i(\cdot) : i \in \mathbb{N}_0)$,
 $\Lb(\cdot) := (L_i(\cdot) : i \in \mathbb{N}_0)$, adapted to $\clf_t \doteq \sigma \{B_i(s): s\le t, i \in \NN_0\}$, such that, a.s. $(\XX, \Lb)$ satisfy \eqref{order_SDEX} and \eqref{eq:loctimecond} and for any $T\in (0,\infty)$,
  $$
 \lim_{m \rightarrow \infty} \sup_{t \in [0,T]}\left[\left| X^{(m)}_i(t) - X_i(t)\right| +  \left| L^{(m)}_i(t) - L_i(t)\right|\right] = 0 \ \text{ a.s., for all } i \in \mathbb{N}_0.
 $$
 We will call $\XX(\cdot)$ the `infinite ordered Atlas model' with driving Brownian motions $B_0(\cdot), B_1(\cdot), B_2(\cdot), \dots,$ started from $\mathbf{x} = (x_0,x_1, x_2\dots)$.
 
\item[(ii)]  Suppose $\mathbf{u}, \mathbf{v}$ are  $\mathbb{R}_+^{\infty} $-valued random variables independent of $\{B_i, i \in \NN_0\}$, such that
 $u_i \le u_{i+1}$ and $v_i \le v_{i+1}$ a.s. for all $i \in \mathbb{N}_0$, and \eqref{eq:initinteg} holds a.s. with $\mathbf{x}$ replaced with $\mathbf{u}$, $\mathbf{v}$. Then, on $(\Om, \clf, \PP)$ there are continuous, $\RR_+^{\infty}$-valued processes  $\XX^{\mathbf{u}}, \Lb^{\mathbf{u}}$ and $\XX^{\mathbf{v}}, \Lb^{\mathbf{v}}$ that are adapted to 
 $\hat \clf_t = \clf_t \vee \sigma\{\mathbf{u}, \mathbf{v}\}$  and solve \eqref{order_SDEX}, \eqref{eq:loctimecond} 
 with $\XX^{\mathbf{u}}(0)= \mathbf{u}$ and $\XX^{\mathbf{v}}(0)= \mathbf{v}$ respectively. These processes are given as follows.
 Let $\XX^{(m),\mathbf{u}}(\cdot)$ and $\XX^{(m),\mathbf{v}}(\cdot)$ be the unique strong solutions of the finite ordered Atlas model with $m+1$ particles, with
 $\XX^{(m),\mathbf{u}}(0) = \mathbf{u} \vert_{m+1}$, $\XX^{(m),\mathbf{v}}(0) = \mathbf{v} \vert_{m+1}$.
  Then $\XX^{\mathbf{u}}$, $\XX^{\mathbf{v}}$ are defined as the a.s. limits of $\XX^{(m),\mathbf{u}}(\cdot)$ and $\XX^{(m),\mathbf{v}}(\cdot)$ as $C([0, \infty): \RR_+^{\infty})$-valued random variables. 
We will refer to these limit processes as a synchronous coupling of the infinite ordered Atlas model driven by Brownian motions $\{B_i, i \in \NN_0\}$ with initial configuration $\XX^{\mathbf{u}}(0) = \mathbf{u}$ and $\XX^{\mathbf{v}}(0) = \mathbf{v}$.
  
 
 \item[(iii)] If  
 $(\mathbf{u}, \mathbf{v})$ in (ii) are such that $\mathbf{u} \le \mathbf{v}$, then the synchronous coupling defined in (ii) almost surely satisfies $\XX^{\mathbf{u}}(t) \le \XX^{\mathbf{v}}(t)$ for all $t \ge 0$. 
 
 Define the corresponding `gap processes' \textcolor{black}{$\Z^{\mathbf{u}}(\cdot)$ and $\Z^{\mathbf{v}}(\cdot)$ by $Z^{\mathbf{u}}_i(\cdot) := X^{\mathbf{u}}_i(\cdot) - X^{\mathbf{u}}_{i-1}(\cdot)$ and $Z^{\mathbf{v}}_i(\cdot) := X^{\mathbf{v}}_i(\cdot) - X^{\mathbf{v}}_{i-1}(\cdot)$} for $i \in \mathbb{N}$. 
 Then $\Z^{\mathbf{u}}(0) \le \Z^{\mathbf{v}}(0)$ almost surely implies the following hold almost surely:
 $$
 \Z^{\mathbf{u}}(t) \le \Z^{\mathbf{v}}(t) \text{ for all } t \ge 0, \text{ and } \mathbf{L}^{\mathbf{v}}(t) - \mathbf{L}^{\mathbf{u}}(t) \le \mathbf{L}^{\mathbf{v}}(s) - \mathbf{L}^{\mathbf{u}}(s) \text{ for all } 0 \le s \le t.
 $$
 
 \item[(iv)] Analogous statements hold with $\mathbf{1}(i=0)$ in  \eqref{order_SDEX} replaced with 
$\gamma\mathbf{1}(i=0)$ where $\gamma \in \mathbb{R}$. Specifically, for
  $\gamma \in \mathbb{R}$, define the `infinite ordered Atlas model with drift $\gamma$' to be the $\mathbb{R}^{\infty}$-valued stochastic process $\XX^{[\gamma]}(\cdot) := (X^{[\gamma]}_0(\cdot),X^{[\gamma]}_1(\cdot)\dots)$ satisfying the SDE
 \begin{equation}\label{order_SDEgamma}
dX^{[\gamma]}_i(t) = \gamma \mathbf{1}(i=0) dt + dB_i(t) - \frac{1}{2}dL^{[\gamma]}_{i+1}(t) + \frac{1}{2} dL^{[\gamma]}_{i}(t), \ t \ge 0, i \in \mathbb{N}_0,
\end{equation}
where $L^{[\gamma]}_0(\cdot) \equiv 0$ and $(L^{[\gamma]}_1(\cdot), L^{[\gamma]}_2(\cdot),\dots)$ are the associated local times as in \eqref{eq:loctimecond}. Then $\XX^{[\gamma]}(\cdot)$ can be obtained as a limit of the solutions to the finite-dimensional analogues of the SDE \eqref{order_SDEgamma} in the sense of (i). In particular, one can construct a synchronous coupling of $\XX(\cdot)$ and $\XX^{[\gamma]}(\cdot)$ for same or different (coupled) initial configurations as in (ii). Moreover, if $\gamma \le 1$ and $\Z(\cdot)$ and $\Z^{[\gamma]}(\cdot)$ are the associated gap processes under synchronous coupling and $\Z(0) = \Z^{[\gamma]}(0)$, then, almost surely,
$$
\Z(t) \le \Z^{[\gamma]}(t) \ \text{ for all } \ t \ge 0.
$$
 \end{itemize}
 \end{proposition}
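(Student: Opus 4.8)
The plan is to deduce parts (i)--(iii) from the construction of \emph{approximative versions} of the infinite ordered Atlas model carried out in \cite{AS}, and to obtain part (iv) along the same lines after recording the monotonicity of the Skorokhod map of the Atlas reflection matrix. For (i) I would invoke \cite{AS} directly: under \eqref{eq:initinteg}, each finite ordered Atlas model $\XX^{(m)}$ is the unique strong solution of its reflected SDE (hence well defined and $\clf_t$--adapted), and these converge a.s., locally uniformly in $t$, to a limit $\XX$ which, together with the limiting local times $\Lb$, solves \eqref{order_SDEX}--\eqref{eq:loctimecond}; the limit inherits $\clf_t$--adaptedness. Part (ii) is then immediate by applying (i) separately to the initial data $\mathbf{u}$ and $\mathbf{v}$ driven by the common Brownian motions $\{B_i\}$, with $\hat\clf_t$--measurability inherited from the finite level. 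For the order-statistics comparison in (iii), i.e.\ $\mathbf{u}\le\mathbf{v}\Rightarrow\XX^{\mathbf{u}}(t)\le\XX^{\mathbf{v}}(t)$ for all $t$, I would use the pathwise comparison for \eqref{order_SDEX} with identical driving noise and drift coefficients: at level $m$, apply Tanaka's formula to $\sum_i\bigl(X^{(m),\mathbf{u}}_i(t)-X^{(m),\mathbf{v}}_i(t)\bigr)_+^2$ and use the sign structure of the reflection terms to see it vanishes identically, then pass to the limit via (i); this monotonicity of approximative versions is recorded in \cite{AS}.

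The remaining claims --- the gap comparison and the monotonicity of the local-time difference in (iii), together with part (iv) --- I would reduce to a single finite-dimensional fact and then pass to the limit. For fixed $m$, the gap process $\Z^{(m)}$ is the image, under the Skorokhod map $\Gamma_R$ of the tridiagonal reflection matrix $R=I-Q$ with $Q_{i,i\pm1}=1/2$, of the free path $\eta(t)=\Z^{(m)}(0)+\mathrm{mart}(t)-t\,e_1$ (the drift being carried only by the first gap), with $\mathbf{L}^{(m)}$ as its pushing part. Since the $m\times m$ matrix $Q$ is sub-stochastic with $\rho(Q)<1$, the map $\Gamma_R$ is well defined, Lipschitz and monotone in the free path, and its pushing part is anti-monotone in the free path (a larger $\eta$ forces, pointwise in $t$, no more pushing); see \cite{harrison1987multidimensional}. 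When two coupled initial gap vectors are driven by the same noise, $\eta^{\mathbf{u}}$ and $\eta^{\mathbf{v}}$ differ by the constant $\Z^{(m),\mathbf{v}}(0)-\Z^{(m),\mathbf{u}}(0)$, so $\Z^{(m),\mathbf{u}}(0)\le\Z^{(m),\mathbf{v}}(0)$ yields $\eta^{\mathbf{u}}\le\eta^{\mathbf{v}}$, hence $\Z^{(m),\mathbf{u}}(t)\le\Z^{(m),\mathbf{v}}(t)$ and $\mathbf{L}^{(m),\mathbf{v}}(t)\le\mathbf{L}^{(m),\mathbf{u}}(t)$ for all $t$. For the non-increase of $\Delta(t):=\mathbf{L}^{(m),\mathbf{v}}(t)-\mathbf{L}^{(m),\mathbf{u}}(t)$, I would use the flow property of $\Gamma_R$: restarted at time $s$, the two processes solve Skorokhod problems with the \emph{same} free increment on $[s,t]$ but with ordered initial data $\Z^{(m),\mathbf{u}}(s)\le\Z^{(m),\mathbf{v}}(s)$, so the comparison just established gives $\mathbf{L}^{(m),\mathbf{v}}(t)-\mathbf{L}^{(m),\mathbf{v}}(s)\le\mathbf{L}^{(m),\mathbf{u}}(t)-\mathbf{L}^{(m),\mathbf{u}}(s)$, i.e.\ $\Delta(t)\le\Delta(s)$. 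Passing $m\to\infty$ via (i) then yields (iii). For (iv), all of the above goes through verbatim with the bottom drift $\mathbf{1}(i=0)$ replaced by $\gamma\mathbf{1}(i=0)$; and when $\Z(0)=\Z^{[\gamma]}(0)$, the gap free paths for drift parameters $1$ and $\gamma$ differ only in the first coordinate, where the drift term is $-t$ versus $-\gamma t$ (negative because an upward push of the lowest particle shrinks the first gap), so $\gamma\le1$ makes the parameter-$\gamma$ free path dominate the parameter-$1$ one coordinatewise; $\Gamma_R$--monotonicity and then $m\to\infty$ give $\Z(t)\le\Z^{[\gamma]}(t)$ for all $t\ge0$.

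The main obstacle I expect is the bookkeeping for the infinite-dimensional passage to the limit: regularity and monotonicity of $\Gamma_R$ are genuinely finite-$m$ statements (the infinite reflection matrix has spectral radius $1$ on $\ell^\infty$), so every inequality must be obtained at level $m$ and transferred using only the a.s.\ local-uniform convergence $\XX^{(m)}\to\XX$ of (i), checking along the way that order relations and the complementarity conditions in \eqref{eq:loctimecond} are preserved under this limit. A secondary point to be careful about is that the two monotonicity statements in (iii) are logically independent: $\mathbf{u}\le\mathbf{v}$ does \emph{not} imply $\Z^{\mathbf{u}}(0)\le\Z^{\mathbf{v}}(0)$ (a lower bottom particle may have a much larger first gap), so the order-statistics comparison genuinely requires the $\sum_i(\cdot)_+^2$ estimate and cannot be read off from the gap comparison.
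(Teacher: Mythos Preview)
Your proposal is correct and in essence follows the paper's own proof, which is entirely by citation: (i) is \cite[Definition~7, Theorem~3.7, Lemma~6.4]{AS}; (ii) is immediate from (i); (iii) is \cite[Corollary~3.10]{AS}; and (iv) is \cite[Definition~7, Theorem~3.7]{AS} for the approximative construction together with \cite[Corollary~3.12(ii)]{AS} for the drift comparison.

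Where you differ is in supplying the finite-dimensional mechanism behind the cited \cite{AS} results rather than just invoking them. Your route via monotonicity and anti-monotonicity of the Skorokhod map $\Gamma_R$ for the tridiagonal Atlas reflection matrix, combined with the flow/restart property to get $\Delta(t)\le\Delta(s)$, is exactly the content of the finite-dimensional comparison results (the paper itself later uses \cite[Theorem~6]{KW} for this in the proof of Lemma~\ref{l1control}). Your observation that for $\gamma\le 1$ the free paths satisfy $\eta^{[\gamma]}\ge\eta^{[1]}$ because the first-gap drift is $-\gamma t\ge -t$ is precisely what drives \cite[Corollary~3.12(ii)]{AS}. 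So the two approaches coincide at the level of ideas; yours is simply more explicit about why the \cite{AS} corollaries hold, which is pedagogically useful but not logically different. Your caveat about doing all comparisons at level $m$ and then passing to the limit is well placed and is exactly how \cite{AS} proceeds.
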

 
 \begin{proof}
(i) follows from \cite[Definition 7 and Theorem 3.7]{AS}, \cite[Lemma 6.4]{AS} and the discussion following it.
Part (ii) is immediate from (i).
(iii) follows from \cite[Corollary 3.10]{AS}. The assertion in (iv) that $\XX^{\gamma}(\cdot)$ can be obtained as a limit of finite systems again follows from \cite[Definition 7 and Theorem 3.7]{AS}. The pathwise domination assertion under synchronous coupling follows from \cite[Corollary 3.12 (ii)]{AS}.
 \end{proof}
 
 Proposition \ref{syncprop} constructs the infinite ordered Atlas model via approximation by finite ordered Atlas models. As the solution to the finite-dimensional analogue of \eqref{order_SDEX} is unique in law (which follows from pathwise uniqueness), the finite ordered Atlas model has the same law as the ranked processes in the finite Atlas model (which is a weak solution to the finite-dimensional analogue of \eqref{Atlasdef}). The following lemma says that the infinite ordered Atlas model, constructed in  Proposition \ref{syncprop} (i), indeed has the same law as the ranked trajectories of the infinite Atlas model satisfying \eqref{Atlasdef}. A similar result was discussed in \cite[Remark 2]{AS}. We provide a proof for completeness.
 
 \begin{lemma}\label{samelaw}
	 Let $\mathbf{x}$ be a $\RR_+^{\infty}$-valued random variable \textcolor{black}{satisfying $0 \le x_0 \le x_1 \le x_2 \le \cdots$ such that, almost surely, \eqref{eq:initinteg} holds}, and let  $\{B_i, i \in \NN_0\}$ be a sequence of independent standard Brownian motions, independent of $\mathbf{x}$. Let $\XX$ be the process constructed in Proposition \ref{syncprop} (i) with $\XX(0) = \mathbf{x}$. Let $\mu$ be the \textcolor{black}{probability law of $\mathbf{x}$}
	 and $\YY(\cdot) := (Y_0(\cdot), Y_1(\cdot),\dots)$ be the unique weak solution of \eqref{Atlasdef} with $\YY(0) \sim  \mu$. Then $\XX$ has the same law as the
	 ranked processes $\YY_{r}(\cdot) :=(Y_{(0)}(\cdot), Y_{(1)}(\cdot), \dots)$.
\end{lemma}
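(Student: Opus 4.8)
The plan is to identify $\XX$ and $\YY_r$ as the (essentially unique) limit of one and the same sequence of finite-dimensional ordered Atlas models.

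First, I would set up a convenient realization of $\XX$. By \cite[Lemma 3.5]{AS} — the fact used to pass from \eqref{eq:bistar} to \eqref{order_SDE} — the ranked processes $\YY_r$ solve the infinite ordered SDE \eqref{order_SDEX} driven by the independent standard Brownian motions $\{B^*_i : i \in \NN_0\}$ of \eqref{eq:bistar}. Being $\clf_t$-Brownian motions, the $B^*_i$ are independent of $\clf_0$, hence of $\YY_r(0) = \YY(0) \sim \mu$ (which is $\mu$-a.s.\ nondecreasing and satisfies \eqref{eq:initinteg} by hypothesis on $\mathbf{x}$); thus $(\YY_r(0),\{B^*_i\})$ has the same joint law as the pair $(\mathbf{x},\{B_i\})$ in Proposition \ref{syncprop}. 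Each finite ordered Atlas model $\XX^{(m)}$ in Proposition \ref{syncprop}(i) is a fixed measurable functional of its initial configuration and driving Brownian motions (by pathwise uniqueness for the finite-dimensional ordered SDE \cite{HR}), and $\XX$ is their a.s.\ locally uniform limit, so the law of $\XX$ depends only on that joint law. Hence we may, without loss of generality, carry out the construction of Proposition \ref{syncprop}(i) on the $\YY$-space with $\mathbf{x}=\YY_r(0)$ and $B_i=B^*_i$, and write $\XX^{(m)}$ for the resulting finite ordered Atlas model with $m+1$ particles, so that $\XX^{(m)}\to\XX$ a.s.\ locally uniformly.

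It then remains to show that, a.s., $X^{(m)}_i(\cdot)\to Y_{(i)}(\cdot)$ locally uniformly for every $i\in\NN_0$; uniqueness of limits yields $\XX=\YY_r$ a.s., in particular $\XX\equald\YY_r$, which is the assertion. I would prove this convergence by fixing $T\in(0,\infty)$ and $k\in\NN$ and combining (a) the local finiteness of $\YY$ from \cite[Lemma 3.4]{AS}, which bounds by an a.s.\ finite random number the particles that can influence $Y_{(0)},\dots,Y_{(k)}$ on $[0,T]$, and (b) the monotonicity of synchronous couplings of finite ordered Atlas models in the number of particles (Proposition \ref{syncprop}(iii); cf.\ \cite[Corollary 3.10]{AS}), which lets one sandwich the lowest $k$ coordinates of $\XX^{(m)}$ on $[0,T]$, for all large $m$, between those of a fixed finite ordered Atlas model and those of $\YY_r$. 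Together these force $\sup_{s\le T}|X^{(m)}_i(s)-Y_{(i)}(s)|\to 0$ for $i\le k$. Alternatively, this convergence is essentially the content of the theory of approximative versions in \cite{AS}: \cite[Definition 7, Theorem 3.7]{AS} and the discussion following it, together with \cite[Remark 2]{AS}, already furnish the convergence of the finite ordered Atlas models to the ranked infinite Atlas model, and the remaining task is to check that this approximation scheme coincides with the one used in Proposition \ref{syncprop}(i).

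The main obstacle is this last step, namely showing that the limit of the finite-dimensional ordered approximations is $\YY_r$ rather than merely some solution of \eqref{order_SDEX} (pathwise uniqueness for this infinite-dimensional SDE being unknown in general). Concretely, one must argue, uniformly on compact time intervals and respecting the tie-breaking convention, that the top particles of the $(m+1)$-particle finite ordered Atlas model have vanishing influence on its lowest $k$ coordinates as $m\to\infty$; the quantitative handle for this is the combination of local finiteness of $\YY$ with monotonicity of synchronous couplings in the number of particles, and the delicate part is the bookkeeping required to combine them. Everything else — the measurability and joint-law reduction of the first paragraph, and the passage to the limit — is routine.
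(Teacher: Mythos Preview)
Your approach differs from the paper's, and the step you correctly flag as the ``main obstacle'' is precisely where the two diverge. You attempt a \emph{pathwise} identification: realize $\XX$ on the $\YY$-space using the Brownian motions $B^*_i$ of \eqref{eq:bistar}, and then argue $\XX^{(m)}\to\YY_r$ a.s. The sandwich you sketch, however, is not quite supplied by the tools you cite. Local finiteness (\cite[Lemma 3.4]{AS}) controls how many unranked particles enter a given spatial region on $[0,T]$, but does not by itself decouple $(Y_{(0)},\dots,Y_{(k)})$ from higher-indexed ranked particles on $[0,T]$; for that you would need some $L^*_{N+1}$ to be constant on $[0,T]$, which is a stronger statement. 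And Proposition \ref{syncprop}(iii) compares two \emph{infinite} ordered models with ordered initial gaps, not a finite ordered model with $\YY_r$; the inequality $Y_{(i)}\le X^{(m)}_i$ that your sandwich would need is essentially equivalent to knowing that the approximative limit $\XX$ coincides with $\YY_r$, which is the very point in question. These gaps may well be fillable by going deeper into \cite{AS}, but your sketch does not close them.

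The paper sidesteps the pathwise identification entirely by working at the level of laws. It invokes \cite[Theorem 3.3]{AS}, which already gives distributional convergence $\YY^{(m_j)}_r|_k \xrightarrow{d} \YY_r|_k$ of the ranked \emph{finite Atlas models} (solutions of the finite-dimensional analogue of \eqref{Atlasdef}, then ranked) to the ranked infinite model, along a subsequence. Since each $\YY^{(m)}_r$ solves the finite ordered SDE, pathwise uniqueness for that \emph{finite} SDE yields $\YY^{(m)}_r|_k\equald\XX^{(m)}|_k$. Combining this with the a.s.\ convergence $\XX^{(m)}|_k\to\XX|_k$ from Proposition \ref{syncprop}(i) gives $\YY_r|_k\equald\XX|_k$ for every $k$, hence $\YY_r\equald\XX$. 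No common-space coupling, no sandwich, and no appeal to uniqueness for the infinite ordered SDE is needed: the work is outsourced to \cite[Theorem 3.3]{AS}, which already packages the delicate comparison you were trying to reproduce.
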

 
 \begin{proof}
Let $\YY$ be as in the statement of the lemma and 
for any $m \in \mathbb{N}$, let $\YY^{(m)}(\cdot) =(Y^{(m)}_0(\cdot),\dots, Y^{(m)}_m(\cdot))$ be the unique weak solution of  the
finite Atlas model \eqref{Atlasdef}  with $m+1$ particles
(i.e. with $i \in \NN_0$ in \eqref{Atlasdef} replaced by $0 \le i \le m$)
and with $\YY^{(m)}(0)$ distributed same as $(Y_0(0), Y_1(0), \ldots Y_m(0))$.
Let   $\YY^{(m)}_{r}(\cdot) :=(Y^{(m)}_{(0)}(\cdot),\dots, Y^{(m)}_{(m)}(\cdot))$ be the corresponding ranked trajectories. By \cite[Theorem 3.3]{AS}, there exists a sequence $m_j \rightarrow \infty$ such that for any $T >0$ and $k \in \mathbb{N}$,
\begin{equation}\label{eq1}
(\YY^{(m_j)}(\cdot)\vert_k, \YY^{(m_j)}_{r}(\cdot)\vert_k) \xrightarrow{d} (\YY(\cdot)\vert_k, \YY_{r}(\cdot)\vert_k) \ \text{ on } \mathbb{C}([0,T] : \mathbb{R}^{2k}).
\end{equation}
Next let $\mathbf{x}$, $\{B_i, i \in \NN_0\}$ be  as in the statement of the lemma and let $(\XX^{(m)}, \Lb^{(m)})$ be as in Proposition \ref{syncprop} (i) given as the unique strong solution of the corresponding $(m+1)$-dimensional reflected SDE, with $\XX^{(m)}(0) = \mathbf{x}\vert_{m+1}$.
%
\textcolor{black}{From the same argument used to derive \eqref{order_SDE} from \eqref{Atlasdef}, it follows that $\YY^{(m_j)}_{r}(\cdot)$ satisfies the $m_j$-dimensional version of the SDE \eqref{order_SDE} for each $j \ge 1$.
By pathwise uniqueness of solutions to the finite-dimensional reflected SDE \cite{HR}, for any $k \in \mathbb{N}$,}
\begin{equation}\label{eq2}
\YY^{(m_j)}_{r}(\cdot)\vert_k \equald \XX^{(m_j)}(\cdot) \vert_k \ \text{ for all } j \ge 1.
\end{equation}
Moreover, by Proposition \ref{syncprop} (ii), $\XX^{(m)}\vert_k \rightarrow \XX\vert_k$, almost surely, in $C([0, \infty): \RR_+^{k+1})$,
 as $m \rightarrow \infty$. This observation, along with \eqref{eq1} and \eqref{eq2}, gives
$$
\YY_{r}(\cdot)\vert_k \equald \XX(\cdot)\vert_k \ \text{ for any } k \in \mathbb{N},
$$
which proves the lemma.
 \end{proof}
 
A pathwise analysis of synchronous couplings will be the main tool in proving our main results. A key observation is the following control on the $L^1$ distance between the gap processes corresponding to two synchronously coupled infinite ordered Atlas models. In particular, it shows (using Proposition \ref{syncprop} (iii)) that if the initial gap distributions satisfy a coordinate-wise ordering, then this $L^1$ distance is non-increasing in time and its decay rate can be precisely estimated from the local times of collision between the bottom two particles. We note  that the proof for an analogous result for the finite ordered Atlas model is easier and can be deduced from standard properties of reflected Brownian motions \cite[Theorem 6]{KW}. 

\begin{lemma}\label{l1control}
Let $B_0(\cdot), B_1(\cdot), B_2(\cdot), \dots$ be standard Brownian motions given on some probability space $(\Om, \clf, \PP)$.
Let $\XX(\cdot)$ and $\tXX(\cdot)$ be  synchronously coupled infinite ordered Atlas models driven by $\{B_i, i \in \NN_0\}$ as constructed  in Proposition \ref{syncprop} (ii), with the initial configurations $\XX(0)$ and $\tXX(0)$  such that the corresponding probability laws are in $\cls_0$. 
Let $\Z(\cdot)$ and $\tZ(\cdot)$ denote the corresponding gap processes. Suppose $\tZ(0) \ge \Z(0)$. Write $\Delta \Z(\cdot) := \tZ(\cdot) - \Z(\cdot)$ and assume $\sum_{j=1}^{\infty} j \Delta Z_j(0) < \infty$. Let $\mathbf{L}(\cdot)$ and $\widetilde{\mathbf{L}}(\cdot)$ denote the local times of collision for the respective Atlas models and write $\Delta \mathbf{L}(\cdot) := \widetilde{\mathbf{L}}(\cdot) - \mathbf{L}(\cdot)$. Then, almost surely,
\begin{equation}\label{decayl1}
\sum_{j=1}^{\infty} \Delta Z_j(t) = \sum_{j=1}^{\infty} \Delta Z_j(0) + \frac{1}{2} \Delta L_1(t), \ t \ge 0.
\end{equation}
\end{lemma}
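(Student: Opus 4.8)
The plan is to establish a truncated version of \eqref{decayl1} by tracking the difference of the bottom $N$ particles of the two coupled systems, and then to eliminate the resulting ``boundary'' local-time term as $N\to\infty$ using the finite-dimensional approximations of Proposition \ref{syncprop}. First, since $\XX(0),\tXX(0)$ have laws in $\cls_0$ their bottom particles both start at $0$, so $\tZ(0)\ge\Z(0)$ forces $\tXX(0)\ge\XX(0)$ coordinatewise; Proposition \ref{syncprop}(iii) then gives, almost surely, $\tXX(t)\ge\XX(t)$ and $\Delta Z_i(t)\ge 0$ for all $i,t$, and that each $\Delta L_i:=\tilde L_i - L_i$ is non-increasing with $\Delta L_i(0)=0$, hence $\Delta L_i(t)\le 0$ and $|\Delta L_i(t)|\le L_i(t)$. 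Writing $\Delta X_i:=\tilde X_i - X_i$ and subtracting the two copies of \eqref{order_SDEX} (driven by the same Brownian motions and the same drift $\mathbf{1}(i=0)$), the drift and Brownian terms cancel and one is left with the finite-variation relations $d\Delta X_0 = -\tfrac12 d\Delta L_1$ and $d\Delta X_i = \tfrac12 d\Delta L_i - \tfrac12 d\Delta L_{i+1}$ for $i\ge 1$. Using $\Delta X_0(0)=0$ and $\Delta X_N(0)=\sum_{i=1}^N\Delta Z_i(0)$, integrating and telescoping $\sum_{i=1}^N\Delta Z_i(t)=\Delta X_N(t)-\Delta X_0(t)$ gives, for all $N$ and $t$,
\begin{equation*}
S_N(t):=\sum_{i=1}^N\Delta Z_i(t)=S_N(0)+\tfrac12\Delta L_1(t)+\tfrac12\bigl(\Delta L_N(t)-\Delta L_{N+1}(t)\bigr).
\end{equation*}
By non-negativity, $S_N(t)\uparrow S_\infty(t):=\sum_{i\ge1}\Delta Z_i(t)\in[0,\infty]$ as $N\to\infty$, and $S_N(0)\uparrow S_\infty(0):=\sum_{i\ge1}\Delta Z_i(0)$, which is finite since $S_\infty(0)\le\sum_{i\ge1}i\,\Delta Z_i(0)<\infty$ by hypothesis.

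To show $S_\infty(t)<\infty$ I would invoke the finite ordered Atlas models $\XX^{(m)},\tXX^{(m)}$ of Proposition \ref{syncprop}, started from the truncations $\XX(0)\vert_{m+1},\tXX(0)\vert_{m+1}$. The identity above applies to them with $L^{(m)}_{m+1}\equiv0$, yielding $\sum_{i=1}^m\Delta Z^{(m)}_i(t)=\sum_{i=1}^m\Delta Z_i(0)+\tfrac12\Delta L^{(m)}_1(t)+\tfrac12\Delta L^{(m)}_m(t)\le\sum_{i=1}^m\Delta Z_i(0)+\tfrac12\Delta L^{(m)}_1(t)$, where $\Delta L^{(m)}_m(t)\le0$ by the (standard) finite-dimensional analogue of Proposition \ref{syncprop}(iii). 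Fixing $k$ and letting $m\to\infty$, the locally uniform convergences of Proposition \ref{syncprop}(i) give $\Delta Z^{(m)}_i(t)\to\Delta Z_i(t)$ and $\Delta L^{(m)}_1(t)\to\Delta L_1(t)$, so (using $\sum_{i=1}^k\Delta Z^{(m)}_i(t)\le\sum_{i=1}^m\Delta Z^{(m)}_i(t)$ for $m\ge k$) one gets $\sum_{i=1}^k\Delta Z_i(t)\le\liminf_m\sum_{i=1}^m\Delta Z^{(m)}_i(t)\le S_\infty(0)+\tfrac12\Delta L_1(t)$; letting $k\to\infty$ gives $S_\infty(t)\le S_\infty(0)+\tfrac12\Delta L_1(t)<\infty$, simultaneously for all $t$ on an event of full probability.

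Finally, I would return to the truncated identity and let $N\to\infty$: since $S_N(t)\to S_\infty(t)<\infty$ and $S_N(0)\to S_\infty(0)<\infty$, the boundary term converges, $\tfrac12\bigl(\Delta L_N(t)-\Delta L_{N+1}(t)\bigr)\to S_\infty(t)-S_\infty(0)-\tfrac12\Delta L_1(t)=:-c(t)$, and $c(t)\ge0$ by the previous paragraph. If $c(t)>0$, then $\Delta L_{N+1}(t)-\Delta L_N(t)\to 2c(t)>0$ would force $\Delta L_N(t)\to+\infty$, contradicting $\Delta L_N(t)\le0$; hence $c(t)=0$, i.e. $S_\infty(t)=S_\infty(0)+\tfrac12\Delta L_1(t)$, which is \eqref{decayl1}. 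All the ingredients above hold on a single event of full probability and for every $t\ge0$, so \eqref{decayl1} holds a.s. for all $t\ge0$. I expect the crux to be exactly this elimination of the boundary term: unlike the finite Atlas model (where the analogous identity follows immediately from reflected-Brownian-motion theory), the infinite-dimensional identity always retains $\Delta L_N(t)-\Delta L_{N+1}(t)$, and ruling out a nonzero limit requires marrying the a priori finiteness bound produced by the finite approximation with the one-sided constraint $\Delta L_N(t)\le0$; making the $m\to\infty$ interchange that yields the finiteness bound fully rigorous is the most delicate step.
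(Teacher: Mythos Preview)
Your proof is correct and genuinely different from the paper's. Both arguments begin with the same truncated identity
\[
\sum_{i=1}^N\Delta Z_i(t)=\sum_{i=1}^N\Delta Z_i(0)+\tfrac12\Delta L_1(t)+\tfrac12\bigl(\Delta L_N(t)-\Delta L_{N+1}(t)\bigr),
\]
and both aim to kill the boundary term as $N\to\infty$. The paper proceeds by exploiting the explicit inverse $(R^{(m)})^{-1}_{ij}=2(i\wedge j)(1-\tfrac{i\vee j}{m+1})$ of the reflection matrix of the finite model: this yields the quantitative bounds $\sum_{j\ge1} j\,\Delta Z_j(t)\le\zeta$, $0\le -\Delta L_d(t)\le 2\zeta$, and $|\tfrac{d+1}{2}\Delta L_d(t)-\tfrac{d}{2}\Delta L_{d+1}(t)|\le\zeta$, and then an algebraic rewriting $\tfrac12\Delta L_d-\tfrac12\Delta L_{d+1}=\tfrac1d\bigl(\tfrac{d+1}{2}\Delta L_d-\tfrac{d}{2}\Delta L_{d+1}\bigr)-\tfrac{1}{2d}\Delta L_d$ makes the vanishing explicit. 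You instead use the finite approximations only to produce the one-sided inequality $S_\infty(t)\le S_\infty(0)+\tfrac12\Delta L_1(t)$, and then dispose of the boundary term by a soft contradiction: a strictly negative limit would force $\Delta L_N(t)\to+\infty$, which is impossible since each $\Delta L_N(t)\le 0$. Your route is shorter and avoids the explicit matrix inverse altogether; the paper's route, in exchange, produces the stronger intermediate estimates (uniform boundedness of $-\Delta L_d(t)$ and of $\sum_j j\,\Delta Z_j(t)$), which are of independent interest even though they are not needed later in the paper.
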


\begin{proof}
Observe form \eqref{order_SDEX} that for any $d \in \mathbb{N}, t \ge 0$,
\begin{align}\label{l0}
\sum_{j=1}^d \Delta Z_j(t) &= \sum_{j=1}^d \Delta Z_j(0) + \sum_{j=1}^d \left(\Delta L_j(t) - \frac{1}{2}\Delta L_{j-1}(t) - \frac{1}{2}\Delta L_{j+1}(t)\right)\\
&= \sum_{j=1}^d \Delta Z_j(0) + \frac{1}{2} \Delta L_1(t) + \frac{1}{2} \Delta L_d(t) - \frac{1}{2} \Delta L_{d+1}(t).\notag
\end{align}
Thus, to show \eqref{decayl1}, it suffices to show that, for every $t \ge 0$, 
\begin{equation}\label{locdec}
\lim_{d \rightarrow \infty} \ \frac{1}{2} \Delta L_d(t) - \frac{1}{2} \Delta L_{d+1}(t) = 0.
\end{equation}
This will be shown in several steps as follows.

For $m \in \mathbb{N}$, denote by $\Z^{(m)}(\cdot)$ and $\tZ^{(m)}(\cdot)$ the $m$-dimensional gap processes associated with 
ranked processes $\XX^{(m)}(\cdot)$ and $\tXX^{(m)}(\cdot)$ for the
finite ordered Atlas models with $m+1$ particles, respectively starting from $\XX(0) \vert_{m+1}$ and $\tXX(0) \vert_{m+1}$ and driven by $\{B_i\}_{i=0}^m$. Recall from Proposition \ref{syncprop} that  $\Z^{(m)}(\cdot)$ and $\tZ^{(m)}(\cdot)$ respectively converge to $\Z(\cdot)$ and $\tZ(\cdot)$ a.s. in $C([0,\infty):\RR_+^{\infty})$ as $m \rightarrow \infty$. Write $\Delta \Z^{(m)}(\cdot) := \tZ^{(m)}(\cdot) - \Z^{(m)}(\cdot)$. Similarly, let $\Delta \mathbf{L}^{(m)}(\cdot) := \widetilde{\mathbf{L}}^{(m)}(\cdot) - \mathbf{L}^{(m)}(\cdot)$ denote the difference vector between the respective local times. Then, from the SDE for $\XX^{(m)}(\cdot)$ and $\tXX^{(m)}(\cdot)$
(i.e. the finite-dimensional version of \eqref{order_SDEX}), we obtain
\begin{equation}\label{l1}
\Delta \Z^{(m)}(t) = \Delta \Z^{(m)}(0) + R^{(m)} \Delta \mathbf{L}^{(m)}(t), \ t \ge 0,
\end{equation}
where $R^{(m)}$ is an $m \times m$ matrix given by $R^{(m)}_{ii} =1$ for all $1 \le i \le m$, $R^{(m)}_{i (i-1)} = - 1/2$ for $2 \le i \le m$, $R^{(m)}_{i (i+1)} = -1/2$ for $1 \le i \le m-1$ and $R^{(m)}_{ij} = 0$ if $|i-j| \ge 2$. It can be checked (see, for example, proof of \cite[Theorem 4]{BanBudh}) that $R^{(m)}$ is invertible and
\begin{equation}\label{rmdef}
(R^{(m)})^{-1}_{ij} = 2(i \wedge j)\left(1- \frac{i \vee j}{m+1}\right), \ 1\le i,j \le m.
\end{equation}
Also, from  \cite[Theorem 6]{KW},
\begin{equation}\label{eqfromKW}
	\mbox{ For every } t\ge 0, \tZ^{(m)}(t) \ge \Z^{(m)}(t) \mbox{ and } -\Delta \mathbf{L}^{(m)}(\cdot) \mbox{ is nonnegative and nondecreasing, a.s.}
\end{equation}
Write $\zeta := \sum_{j=1}^{\infty} j \Delta Z_j(0) < \infty$. Then, it follows from \eqref{l1} that, for any $1 \le d \le m$, $t \ge 0$,
\begin{align}\label{l2}
\sum_{j=1}^d(R^{(m)})^{-1}_{dj}\Delta Z^{(m)}_j(t) &\le \sum_{j=1}^m(R^{(m)})^{-1}_{dj}\Delta Z^{(m)}_j(t)  = \sum_{j=1}^m(R^{(m)})^{-1}_{dj}\Delta Z^{(m)}_j(0) + \Delta L^{(m)}_d(t)\\
& \le  \sum_{j=1}^m(R^{(m)})^{-1}_{dj}\Delta Z^{(m)}_j(0) \le 2\sum_{j=1}^m j \Delta Z^{(m)}_j(0) = 2\sum_{j=1}^m j \Delta Z_j(0) \le 2\zeta,\notag
\end{align}
where we used the non-negativity of \textcolor{black}{$(R^{(m)})^{-1}_{dj}$} (see \eqref{rmdef}) and the first part of \eqref{eqfromKW}
for the first inequality,  the second part of \eqref{eqfromKW} for the second inequality and the explicit form of $(R^{(m)})^{-1}$ given in \eqref{rmdef} in the third inequality. Recall that, for any $t \ge 0, j \in \mathbb{N}$, $\Delta Z^{(m)}_j(t) \rightarrow \Delta Z_j(t)$ as $m \rightarrow \infty$. Moreover, using \eqref{rmdef}, for any fixed $j,d \in \mathbb{N}$ with $j \le d$, $(R^{(m)})^{-1}_{dj} \rightarrow 2j$ as $m \rightarrow \infty$. Thus, fixing $d \in \mathbb{N}$, we obtain by letting $m \rightarrow \infty$ on the left hand side of \eqref{l2}, for all $d\in \NN$,
\begin{equation}\label{l3}
2\sum_{j=1}^d j \Delta Z_j(t) \le 2\zeta, \ t \ge 0.
\end{equation}
By Proposition \ref{syncprop} (iii), the left hand side of \eqref{l3} is non-decreasing in $d$. Hence, taking  limit as $d \rightarrow \infty$,
\begin{equation}\label{l4}
\sum_{j=1}^{\infty} j \Delta Z_j(t) \le \zeta, \ t \ge 0.
\end{equation}
Next, we claim that for any $d \in \mathbb{N}, t \ge 0$,
\begin{equation}\label{l5}
0 \le - \Delta L_d(t) \le 2\zeta.
\end{equation}
To show this, we note that by  \eqref{eqfromKW} and  \eqref{l2}, for any $m \ge d$,
$$
- \Delta L^{(m)}_d(t) \le \sum_{j=1}^m(R^{(m)})^{-1}_{dj}\Delta Z^{(m)}_j(0)  \le 2\zeta.
$$
The upper bound in \eqref{l5} now follows upon taking a limit as $m \rightarrow \infty$ on the left hand side above and using Proposition \ref{syncprop} (i). The lower bound follows from Proposition \ref{syncprop} (iii).

Finally, we claim that for any $d \in \mathbb{N}, t \ge 0$,
\begin{equation}\label{l6}
-\zeta \le \frac{d+1}{2} \Delta L_d(t) - \frac{d}{2} \Delta L_{d+1}(t) \le \zeta.
\end{equation}
To see this, observe that
\begin{align*}
\sum_{j=1}^{d} j \Delta Z_j(t) &= \sum_{j=1}^{d} j \Delta Z_j(0) + \sum_{j=1}^d j \left(\Delta L_j(t) - \frac{1}{2}\Delta L_{j-1}(t) - \frac{1}{2}\Delta L_{j+1}(t)\right)\\
&= \sum_{j=1}^{d} j \Delta Z_j(0) + \frac{d+1}{2} \Delta L_d(t) - \frac{d}{2} \Delta L_{d+1}(t).
\end{align*}
Hence,
$$
\frac{d+1}{2} \Delta L_d(t) - \frac{d}{2} \Delta L_{d+1}(t) = \sum_{j=1}^{d} j \Delta Z_j(t) - \sum_{j=1}^{d} j \Delta Z_j(0).
$$
The inequality in \eqref{l6} is now immediate from the above upon using Proposition \ref{syncprop} (iii) and \eqref{l4} and noting that
$$
-\zeta = - \sum_{j=1}^{\infty} j \Delta Z_j(0) \le \sum_{j=1}^{d} j \Delta Z_j(t) - \sum_{j=1}^{d} j \Delta Z_j(0) \le \sum_{j=1}^{\infty} j \Delta Z_j(t) \le \zeta.
$$
The convergence in \eqref{locdec} now follows from \eqref{l5} and \eqref{l6} as for every $t \ge 0$,
$$
\lim_{d \rightarrow \infty} \frac{1}{2} \Delta L_d(t) - \frac{1}{2} \Delta L_{d+1}(t) = \lim_{d \rightarrow \infty} \left[\frac{1}{d}\left(\frac{d+1}{2} \Delta L_d(t) - \frac{d}{2} \Delta L_{d+1}(t)\right) - \frac{1}{2d} \Delta L_d(t)\right] = 0.
$$
This proves \eqref{decayl1} and hence the lemma.
\end{proof}

\section{Influence of far away coordinates}\label{farsec}

A crucial component of our approach will be to obtain a quantitative understanding of the time taken for the first $k$ gaps between the ranked particles in the infinite Atlas model to `feel the effect' of the unordered processes $Y_i(\cdot)$ for \textcolor{black}{$i \gg k$}. In this section, we derive such estimates. 
We recall that the infinite Atlas model is given as the unique weak solution of \eqref{Atlasdef} when the law of $Y(0)$ is an element of $\cls_0$. Note that the latter property, which will be assumed throughout this section, says that
$Y_{(0)}(0) = 0$ and $Y_{(i)}(0) = Y_i(0)$ for all $i \in \mathbb{N}_0$.

We will rely on the following estimates derived in \cite{DJO}.

\begin{lemma}\label{DJOest}
(i) (\cite[Lemma 3.2]{DJO}) For any $k \in \mathbb{N},  t >0, l \in \mathbb{N}$, $\Gamma \in \mathbb{R}$, a.s., 
\begin{align}\label{sup}
\mathbb{P}\left( \sup_{s \in [0, t]} Y_{(k)}(s) \ge \Gamma \ \big| \ \Y(0)\right) &\le 2\bar{\Phi}\left(\frac{l(\Gamma - Y_{k}(0))/3 - t - \sum_{j=0}^{l-1}Y_{j}(0)}{\sqrt{lt}} \right)\\
 &\qquad + 4(k+1)\bar{\Phi}\left(\frac{\Gamma - Y_{k}(0)}{3\sqrt{t}} \right).\notag
\end{align}
(ii) (\cite[Lemma 3.3]{DJO}) For any $d \in \mathbb{N}, t>0, \Gamma >0$, a.s., 
\begin{equation}\label{inf}
\mathbb{P}\left( \inf_{s \in [0, t]} \inf_{i \ge d}Y_{i}(s) \le \Gamma \ \big| \ \Y(0)\right) \le 2\sum_{i \ge d}\bar{\Phi}\left(\frac{Y_{i}(0) - \Gamma}{\sqrt{t}}\right).
\end{equation}
\end{lemma}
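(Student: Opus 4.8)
The plan is to prove the two displays separately: \eqref{inf} is elementary, while \eqref{sup} requires decomposing the event $\{\sup_{s\in[0,t]}Y_{(k)}(s)\ge\Gamma\}$ into two competing mechanisms and handling them with the reflection principle.

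For (ii), the key is that the drift in \eqref{Atlasdef} is nonnegative, so each unordered coordinate dominates its own driving Brownian motion: $Y_i(s)\ge Y_i(0)+W_i(s)$, hence $\inf_{s\in[0,t]}Y_i(s)\ge Y_i(0)+\inf_{s\in[0,t]}W_i(s)$. Since $W_i$ is a Brownian motion with respect to $\{\clf_t\}$, it is independent of $\Y(0)\in\clf_0$, and the reflection principle gives, for each fixed $i$, $\mathbb P(\inf_{s\in[0,t]}Y_i(s)\le\Gamma\mid\Y(0))\le 2\bar{\Phi}((Y_i(0)-\Gamma)/\sqrt t)$ --- this is the reflection identity when $Y_i(0)\ge\Gamma$ and trivially true otherwise (the right side is then $\ge 1$). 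By local finiteness under \eqref{S} (\cite[Lemma 3.4]{AS}) the infimum $\inf_{s\in[0,t],\,i\ge d}Y_i(s)$ is a.s.\ attained, so the event in \eqref{inf} is, up to a null set, contained in $\bigcup_{i\ge d}\{\inf_{s\in[0,t]}Y_i(s)\le\Gamma\}$, and a union bound finishes (ii).

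For (i) I would first assume $\Gamma>Y_k(0)$ (otherwise the second term of \eqref{sup} already exceeds $1$), set $\sigma:=\inf\{s:Y_{(k)}(s)\ge\Gamma\}$, and record two facts. The first is the telescoping identity obtained by summing \eqref{order_SDE} over $i=0,\dots,l-1$: since the reflection terms collapse and $L^*_0\equiv 0$,
\[
\sum_{i=0}^{l-1}Y_{(i)}(s)=\sum_{j=0}^{l-1}Y_j(0)+s+\sum_{i=0}^{l-1}B^*_i(s)-\tfrac12 L^*_l(s)\le \sum_{j=0}^{l-1}Y_j(0)+t+\sum_{i=0}^{l-1}B^*_i(s),\qquad s\in[0,t],
\]
where $\sum_{i=0}^{l-1}B^*_i(\cdot)$ is $\sqrt l$ times a standard Brownian motion and, being built from $\{\clf_t\}$-Brownian motions, is independent of $\Y(0)$. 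The second fact is that on $\{\sup_{s\in[0,t]}Y_{(k)}(s)\ge\Gamma\}$ at least one of the $k+1$ initially lowest particles $Y_0,\dots,Y_k$ must reach level $\Gamma$ on $[0,t]$: if all of them stayed below $\Gamma$ then at every time $s\le t$ there would be at least $k+1$ particles below $\Gamma$, forcing $Y_{(k)}(s)<\Gamma$. When $l$ is large relative to $k$ (concretely $l\ge\lceil 3k/2\rceil$), at time $\sigma$ the highest $l-k$ of the lowest $l$ ordered particles all sit at height $\ge\Gamma$, so $\sum_{i=0}^{l-1}Y_{(i)}(\sigma)\ge (l-k)\Gamma+\sum_{i=0}^{k-1}Y_{(i)}(\sigma)$; modulo controlling the last sum from below, this is $\ge\tfrac l3(\Gamma-Y_k(0))$ (using $Y_k(0)\ge 0$), and feeding this into the telescoping bound and applying the reflection principle to $\sum_{i=0}^{l-1}B^*_i$ produces precisely the first term of \eqref{sup}. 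For the complementary range of $l$ one instead uses the second fact: $\mathbb P(\sup_{s\in[0,t]}Y_{(k)}(s)\ge\Gamma\mid\Y(0))\le\sum_{j=0}^k\mathbb P(\sup_{s\in[0,t]}Y_j(s)\ge\Gamma\mid\Y(0))$, and $Y_j(s)\le Y_j(0)+s+W_j(s)\le Y_k(0)+t+W_j(s)$ with reflection bounds each summand; the factor $\tfrac13$ in the second term of \eqref{sup} arises from apportioning the displacement $\Gamma-Y_k(0)$ between the drift ($\le t$) and the Brownian fluctuation, and the factor $4$ absorbs the resulting case analysis. I would carry out this bookkeeping exactly as in \cite[Lemmas 3.2--3.3]{DJO}, from which \eqref{sup}--\eqref{inf} are quoted.

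The routine inputs --- the Brownian domination bounds, the telescoping identity for the lowest $l$ ordered particles, the independence of $\{B^*_i\}$ from $\clf_0$, and the one-dimensional reflection principle --- are all easy. The one genuinely delicate point is obtaining \eqref{sup} \emph{uniformly in} $l$: one must interpolate cleanly between the ``a low particle escapes upward'' regime (efficient when $l$ is small) and the ``the lowest $l$ particles collectively drift up by a moderate amount'' regime (efficient when $l$ is large), and along the way control the contribution of ranks $0,\dots,k-1$ to the sum $\sum_{i=0}^{l-1}Y_{(i)}(\sigma)$ --- equivalently, keep the lowest particle from dipping too far below $0$ on $[0,t]$ --- which is again a reflection estimate but demands the careful constant-tracking done in \cite{DJO}.
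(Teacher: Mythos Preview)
The paper does not prove this lemma; it is quoted verbatim from \cite[Lemmas~3.2--3.3]{DJO}, as the statement itself indicates. So the ``paper's proof'' is simply the citation, and your proposal --- a sketch that ultimately defers to \cite{DJO} --- is effectively the same treatment.

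Your argument for (ii) is correct and complete: the nonnegative drift gives $Y_i(s)\ge Y_i(0)+W_i(s)$, the $W_i$ are independent of $\clf_0\ni\Y(0)$, and the reflection principle plus a union bound finishes it.

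Your sketch for (i) assembles the right ingredients --- the telescoping identity $\sum_{i<l}Y_{(i)}(s)\le\sum_{j<l}Y_j(0)+s+\sum_{i<l}B^*_i(s)$, the pigeonhole fact that one of $Y_0,\dots,Y_k$ must reach $\Gamma$, and the reflection principle --- but the organization as a case split on whether $l\ge\lceil 3k/2\rceil$ is not quite the right framing. The bound \eqref{sup} must hold for \emph{every} $l$ with both terms present, and the two terms come from decomposing the \emph{event} (for each fixed $l$), not from splitting the range of $l$. Concretely, your ``small $l$'' branch claims the second term alone controls $\mathbb P(\sup_sY_{(k)}(s)\ge\Gamma)$, but the crude bound $Y_j(s)\le Y_k(0)+t+W_j(s)$ only yields $2(k+1)\bar\Phi((\Delta-t)/\sqrt t)$ with $\Delta=\Gamma-Y_k(0)$, and this does not reduce to $4(k+1)\bar\Phi(\Delta/(3\sqrt t))$ when $t$ is comparable to $\Delta$ and $k\ge 2$. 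Similarly, in the ``large $l$'' branch the lower bound on $\sum_{i<k}Y_{(i)}(\sigma)$ is not routine, since ordered particles can go negative. You correctly flag this interpolation as the delicate step and hand it off to \cite{DJO}; since the paper does the same, there is no real discrepancy.
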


The following lemma will be used in Section \ref{mainproofs} (see proof of Lemma \ref{undconv}) to quantify influence of far away coordinates when the starting configuration satisfies \eqref{star}.

\begin{lemma}\label{farpi}
Suppose that
the distribution of 
$Y_i(0) - Y_{i-1}(0)$ is stochastically dominated by the $\operatorname{Exp}(2)$ distribution  for each $i \in \mathbb{N}$. Moreover, assume
\begin{equation}\label{farpicond}
\liminf_{d \rightarrow \infty} \frac{Y_d(0)}{\sqrt{d}(\log d)} = \infty \ \text{ almost surely.}
\end{equation}
Then for any $A \ge 4$, $k \in \mathbb{N}$, writing $t^A_d := Ad (\log d)$ and $\Gamma^A_d := 6A\sqrt{d}(\log d)$ for $d \in \mathbb{N}$,
\begin{align*}
\mathbb{P}\left(\inf_{s \in [0, t^A_d]} \inf_{i \ge d}Y_{i}(s) \le \Gamma^A_d\right) &\rightarrow 0 \text{ as } d \rightarrow \infty,\\
\mathbb{P}\left(\sup_{s \in [0, t^A_d]} Y_{(k)}(s) \ge \Gamma^A_d\right) &\rightarrow 0 \text{ as } d \rightarrow \infty.
\end{align*}
\end{lemma}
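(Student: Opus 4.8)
The plan is to apply the two estimates in Lemma \ref{DJOest} with the scalings $t = t^A_d = Ad\log d$ and $\Gamma = \Gamma^A_d = 6A\sqrt d\log d$, and to show that the resulting tail bounds go to zero using the hypothesis \eqref{farpicond} together with the stochastic domination by $\operatorname{Exp}(2)$. The key observation driving the choice of constants is that under \eqref{farpicond}, for any fixed $\kappa$ we eventually have $Y_d(0) \ge \kappa\sqrt d\log d$, so $Y_d(0) - \Gamma^A_d$ is eventually of order $Y_d(0)$; meanwhile $\sqrt{t^A_d} = \sqrt A\,\sqrt d\sqrt{\log d}$, so the argument of $\bar\Phi$ in \eqref{inf} behaves like $Y_d(0)/\sqrt{t^A_d} \gtrsim (\kappa/\sqrt A)\sqrt{\log d}$, which is what makes the Gaussian tail summable.

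\textbf{First estimate (infimum of far-away coordinates).} I would start from \eqref{inf}:
\[
\mathbb{P}\Bigl(\inf_{s\in[0,t^A_d]}\inf_{i\ge d}Y_i(s)\le \Gamma^A_d \;\big|\; \Y(0)\Bigr)\le 2\sum_{i\ge d}\bar\Phi\!\left(\frac{Y_i(0)-\Gamma^A_d}{\sqrt{t^A_d}}\right).
\]
On the event (of probability $\to 1$ by \eqref{farpicond}) that $Y_i(0)\ge 2\Gamma^A_i$ for all $i\ge d$ (possible since $\Gamma^A_i = 6A\sqrt i\log i$ and \eqref{farpicond} gives $Y_i(0)/(\sqrt i\log i)\to\infty$), we get $Y_i(0)-\Gamma^A_d\ge Y_i(0)-\Gamma^A_i\ge \tfrac12 Y_i(0)$ for $i\ge d$, hence the argument is at least $\tfrac12 Y_i(0)/\sqrt{t^A_d}$. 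The stochastic domination by $\operatorname{Exp}(2)$ lets me bound $Y_i(0)$ from below along a subsequence only probabilistically, so instead I would use the deterministic lower bound coming from \eqref{farpicond} directly: fix $\kappa$ large (to be chosen in terms of $A$), let $D_\kappa$ be the a.s.-finite (random) index beyond which $Y_i(0)\ge\kappa\sqrt i\log i$; on $\{d\ge D_\kappa\}$ each summand is at most $\bar\Phi\bigl(\tfrac\kappa{2\sqrt A}\cdot\frac{\sqrt i\log i}{\sqrt d\sqrt{\log d}}\bigr)\le \bar\Phi\bigl(c_\kappa\sqrt{\log i}\bigr)$ for $i\ge d$ and $d$ large, with $c_\kappa=\kappa/(2\sqrt A)$. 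Using $\bar\Phi(x)\le e^{-x^2/2}$ gives a summand $\le i^{-c_\kappa^2/2}$, so $\sum_{i\ge d} i^{-c_\kappa^2/2}\to 0$ as $d\to\infty$ provided $c_\kappa^2/2>1$, i.e. $\kappa>2\sqrt{2A}$, which I am free to arrange. Then dominated convergence (the conditional probabilities are bounded by $1$ and tend to $0$ on the a.s. event $\bigcup_\kappa\{D_\kappa<\infty\}$) gives the unconditional convergence to $0$.

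\textbf{Second estimate (supremum of the $k$-th ranked particle).} Here I use \eqref{sup} with a well-chosen auxiliary $l=l_d$. The second term $4(k+1)\bar\Phi\bigl((\Gamma^A_d - Y_k(0))/(3\sqrt{t^A_d})\bigr)$: since $k$ is fixed, $Y_k(0)$ is a fixed random variable and $\Gamma^A_d/\sqrt{t^A_d}\sim (6A/\sqrt A)\sqrt{\log d}=6\sqrt A\sqrt{\log d}\to\infty$, so this term tends to $0$ a.s. (and being bounded, also in probability/unconditionally). For the first term I need $l\bigl((\Gamma^A_d - Y_k(0))/3 - t^A_d\bigr) - \sum_{j=0}^{l-1}Y_j(0)$ to be large compared to $\sqrt{l t^A_d}$. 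The term $-t^A_d$ is order $d\log d$ while $l\Gamma^A_d/3$ is order $l\sqrt d\log d$, so I want $l$ at least of order $\sqrt d$; and I need $\sum_{j=0}^{l-1}Y_j(0)$ controlled — this is where stochastic domination by $\operatorname{Exp}(2)$ is essential, giving $\mathbb{E}[\sum_{j=0}^{l-1}Y_j(0)]\le \sum_{j=0}^{l-1}(j+1)/2 = O(l^2)$, hence $\sum_{j=0}^{l-1}Y_j(0)=O_P(l^2)$. Choosing $l=l_d\asymp \sqrt d\log d$ (or a suitable power, tuned so that $l\Gamma^A_d$ dominates both $l^2$ and $l\,t^A_d$ and $\sqrt{lt^A_d}$), the numerator in the first $\bar\Phi$ is of order $l\sqrt d\log d \gg \sqrt{l\,d\log d}$, so that $\bar\Phi$ argument $\to\infty$; combined with the Gaussian tail bound and a union/Markov argument for the $O_P(l^2)$ term, the first term $\to 0$. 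I would verify the exponents carefully to pin down the precise admissible $l_d$.

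\textbf{Main obstacle.} The delicate point is the first term of \eqref{sup}: one must choose $l=l_d$ so that simultaneously (a) $l\Gamma^A_d$ beats the subtracted drift $l\,t^A_d$ (forcing $\Gamma^A_d\gtrsim t^A_d/1$, i.e. $\sqrt d\log d$ vs $d\log d$ — this fails pointwise, so the $1/3$ and the requirement $A\ge 4$, $\Gamma^A_d=6A\sqrt d(\log d)$ must be used so that after dividing by $l$ the relevant comparison is $2A\sqrt d\log d$ vs $Ad\log d$, still needing $l$ not too large), (b) $l\Gamma^A_d$ beats $\mathbb{E}\sum_{j<l}Y_j(0)=O(l^2)$, forcing $l=O(\sqrt d\log d)$, and (c) the whole numerator beats $\sqrt{l\,t^A_d}$. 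Reconciling (a)–(c) with the specific constants $A\ge4$, $t^A_d=Ad\log d$, $\Gamma^A_d=6A\sqrt d\log d$ is the real content; I expect the correct choice is $l_d$ of order $\sqrt d$ up to logarithmic factors, and the constant $6$ and the hypothesis $A\ge4$ are exactly calibrated to make this go through. The rest is routine Gaussian-tail estimation plus dominated convergence to pass from conditional to unconditional statements.
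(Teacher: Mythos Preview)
Your overall strategy is correct and matches the paper's: apply the two bounds of Lemma~\ref{DJOest}, use \eqref{farpicond} for the infimum estimate, and choose an auxiliary $l=l_d$ together with the stochastic-domination hypothesis for the supremum estimate. Your treatment of the infimum part is essentially identical to the paper's (the paper fixes $\kappa=7A$ in your notation, giving argument $\ge\sqrt{A\log i}$ and hence summability from $A\ge4$).

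The source of your confusion in the ``main obstacle'' paragraph is a misreading of \eqref{sup}: the subtracted drift term in the numerator is $t$, \emph{not} $l\,t$. So your constraint (a) should read ``$l\,\Gamma^A_d/3$ beats $t^A_d$'', which forces $l\gtrsim t^A_d/\Gamma^A_d\sim\sqrt d/2$ rather than the impossible $\Gamma^A_d\gtrsim t^A_d$. Once this is fixed the choice is immediate: the paper takes $l_d=\lceil\sqrt d\,\rceil$, for which $l_d\Gamma^A_d/3\sim 2Ad\log d$ comfortably dominates $t^A_d=Ad\log d$, the partial sum $\sum_{j<l_d}Y_j(0)$ (of order at most $l_d^2\sim d$ in mean), and $\sqrt{l_dt^A_d}\sim\sqrt A\,d^{3/4}\sqrt{\log d}$. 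Your tentative $l_d\asymp\sqrt d\log d$ would also work but is unnecessarily large.

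One genuine methodological difference: to control $\sum_{j<l_d}Y_j(0)$ the paper does not use a first-moment/Markov argument but works on the $d$-independent event $\mathcal A(n):=\{Y_i(0)-Y_{i-1}(0)\le\log i\text{ for all }i>n\}$, which by the $\operatorname{Exp}(2)$ domination has $\mathbb P(\mathcal A(n)^c)\le 1/n$; on $\mathcal A(n)$ one gets the pathwise bound $\sum_{j<l_d}Y_j(0)\le Y_n(0)l_d+(1+\sqrt d)^2\log(1+\sqrt d)$, then lets $n\to\infty$ at the end. Your Markov-based alternative is valid (since $\mathbb E\sum_{j<l_d}Y_j(0)\lesssim d=o(d\log d)$) and arguably simpler, but you should make the passage from ``$O_P(l^2)$'' to an actual vanishing probability explicit, e.g.\ by splitting on the event $\{\sum_{j<l_d}Y_j(0)\le\tfrac12Ad\log d\}$ and using Markov's inequality on its complement.
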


\begin{proof}
For $d \in \mathbb{N}$, let $l_d := \lceil\sqrt{d}\rceil$. Define the event
$$
\mathcal{A}(n) := \{Y_i(0) - Y_{i-1}(0) \le \log i \ \text{ for all } i > n\}, \ n \in \mathbb{N}.
$$
By the stochastic domination assumption,
\begin{align}\label{f1}
\mathbb{P}\left(\mathcal{A}(n)^c\right) \le \sum_{i=n+1}^{\infty}\mathbb{P}\left(Y_i(0) - Y_{i-1}(0) > \log i\right)
 \le \sum_{i=n+1}^{\infty} i^{-2} \le  n^{-1} \rightarrow 0 \text{ as } n \rightarrow \infty.
\end{align}
For fixed $n \in \NN$ and $d$ sufficiently large, we have
\begin{align*}
	\sum_{j=0}^{l_d-1} Y_j(0) &=  \sum_{j=0}^{n-1} Y_j(0) + \sum_{j=n}^{l_d-1} Y_j(0) =
	\sum_{j=0}^{n-1} Y_j(0) + Y_n(0) (l_d-n) + \sum_{j=n}^{l_d-1} (Y_j(0)-Y_n(0))\\
	&\le Y_n(0)l_d + \sum_{j=n}^{l_d-1} (Y_j(0)-Y_n(0)).
\end{align*}
Also, for $n\le j \le l_d-1$, on the event $\mathcal{\A}(n)$,
$$Y_j(0)-Y_n(0) \textcolor{black}{= \sum_{k=n+1}^j (Y_k(0)- Y_{k-1}(0))} \le \sum_{k=n+1}^j \log k.$$
Thus, on $\mathcal{\A}(n)$, for $d$ sufficiently large,
$$\sum_{j=0}^{l_d-1} Y_j(0) \le Y_n(0)l_d + \sum_{k=n+1}^{l_d-1} (k-1) \log k \le 
(1+ \sqrt{d}) Y_n(0) +
(1+ \sqrt{d})^2\log (1 + \sqrt{d}).$$
Thus, for fixed $n \in \NN$, there exists deterministic $d_0 \in \mathbb{N}$ such that, on the event $\mathcal{\A}(n)$, for all $d \ge d_0$,
\begin{align*}
&\frac{l_d(\Gamma^A_d - Y_{k}(0))/3 - t^A_d - \sum_{j=0}^{l_d-1}Y_{j}(0)}{\sqrt{l_d t^A_d}}\\
&\ge \frac{2Ad(\log d) - A d (\log d) - (1+ \sqrt{d})^2\log (1 + \sqrt{d})}{[(1+ \sqrt{d})Ad (\log d)]^{1/2}}
 - \frac{(1 + \sqrt{d})\left(Y_k(0)/3 + Y_n(0)\right)}{[\sqrt{d}Ad (\log d)]^{1/2}}\\
&\ge \frac{A d (\log d)}{4\sqrt{A} d^{3/4} (\log d)^{1/2}} - \frac{2\left(Y_k(0)/3 + Y_n(0)\right)d^{1/4}}{\sqrt{A d \log d}}\\
&= \frac{\sqrt{A}}{4}d^{1/4}(\log d)^{1/2} - \frac{2\left(Y_k(0)/3 + Y_n(0)\right)}{\sqrt{A} d^{1/4} (\log d)^{1/2}}.
\end{align*}
Also, for any $d \in \NN$, 
$$
\frac{\Gamma^A_d - Y_{k}(0)}{3\sqrt{t^A_d}} = \frac{2A \sqrt{d}(\log d)}{\sqrt{A d (\log d)}} - \frac{Y_k(0)}{3\sqrt{A d (\log d)}} = 2\sqrt{A \log d} - \frac{Y_k(0)}{3\sqrt{A d (\log d)}}.
$$
Thus, using \eqref{sup}, for any $d \ge d_0$, on the event $\mathcal{\A}(n)$,
\begin{align*}
\mathbb{P}\left( \sup_{s \in [0, t^A_d]} Y_{(k)}(s) \ge \Gamma^A_d  \ \vert \ \Y(0)\right) 
&\le 2\bar{\Phi}\left(\frac{\sqrt{A}}{4}d^{1/4}(\log d)^{1/2}  - \frac{2\left(Y_k(0)/3 + Y_n(0)\right)}{\sqrt{A} d^{1/4} (\log d)^{1/2}}\right)\\
&\qquad + 4(k+1) \bar{\Phi}\left(2\sqrt{A \log d} - \frac{Y_k(0)}{3\sqrt{A d (\log d)}}\right).\notag
\end{align*}
Note that the right side converges to $0$ as $d\to \infty$.
Hence, for any $n \in \mathbb{N}$,
\begin{equation*}
\limsup_{d \rightarrow \infty}\mathbb{P}\left( \sup_{s \in [0, t^A_d]} Y_{(k)}(s) \ge \Gamma^A_d\right) \le \mathbb{P}\left(\mathcal{A}(n)^c\right).
\end{equation*}
As $n \in \mathbb{N}$ is arbitrary, we conclude from \eqref{f1} that
\begin{equation}\label{f2}
\limsup_{d \rightarrow \infty}\mathbb{P}\left( \sup_{s \in [0, t^A_d]} Y_{(k)}(s) \ge \Gamma^A_d\right)=0.
\end{equation}
This proves the second convergence statement in the lemma.
For the first statement,
define the events
$
\mathcal{A}^*(d) := \{Y_i(0) \ge 7A \sqrt{i}(\log i) \text{ for all } i \ge d\}, \ d \in \mathbb{N}.
$
By assumption \eqref{farpicond},
\begin{equation}\label{f2.5}
\mathbb{P}\left(\mathcal{A}^*(d)\right) \rightarrow 1 \text{ as } d \rightarrow \infty.
\end{equation}
For any $d \in \mathbb{N}$, on the set $\mathcal{A}^*(d)$, for all $i \ge d$,
$$
\frac{Y_{i}(0) - \Gamma^A_d}{\sqrt{t^A_d}} \ge \frac{7A \sqrt{i}(\log i) - 6A \sqrt{d}(\log d)}{\sqrt{A d (\log d)}} \ge \sqrt{A \log i}.
$$
Hence, using \eqref{inf}, for all $d \ge 2$, on the event $\mathcal{A}^*(d)$,
\begin{multline}\label{f3}
\mathbb{P}\left( \inf_{s \in [0, t^A_d]} \inf_{i \ge d}Y_{i}(s) \le \Gamma^A_d \ \vert \ \Y(0)\right) \le 2\sum_{i \ge d}\bar{\Phi}\left(\sqrt{A \log i}\right)\\
\le \sum_{i \ge d}\frac{2}{\sqrt{2\pi A\log i}}e^{-A(\log i)/2} \le \sum_{i \ge d} i^{-A/2} \le  \frac{1}{d-1}.
\end{multline}
Hence, from \eqref{f2.5} and \eqref{f3}, 
\begin{equation}\label{f4}
\limsup_{d \rightarrow \infty}\mathbb{P}\left(\inf_{s \in [0, t^A_d]} \inf_{i \ge d}Y_{i}(s) \le \Gamma^A_d\right) =0.
\end{equation}
This proves the first convergence statement in the lemma and completes the  proof of  the lemma.
\end{proof}

The next few lemmas will be used to study the case when the starting configuration satisfies \eqref{stara}.

\begin{lemma}\label{farpiainf}
Fix $\theta >0$. Suppose there exists $i_{\theta} \ge 2,$ such that 
$Y_i(0) \ge \theta \log i$ for all $i \ge i_{\theta}$. Then there exists $\delta_{\theta} \in (0,1)$ and positive constants $C_1,C_2$ (depending on $\theta$) such that, a.s.,
\begin{equation*}
\mathbb{P}\left(\inf_{s \in [0, \delta_{\theta} t]} \inf_{i \ge e^t} Y_i(s) \le \theta t/2 \ \vert \ \Y(0)\right) \le C_1 e^{-C_2 t}, \ \text{ for all } t \ge 0.
\end{equation*} 
\end{lemma}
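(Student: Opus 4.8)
The plan is to invoke the far-field estimate \eqref{inf} of Lemma \ref{DJOest}(ii) with the spatial cutoff, the time horizon and the level all taken as explicit functions of $t$, and then to bound the resulting series by a summable power of $i$. For $t>0$ let $d_t := \lceil e^t\rceil$, the smallest integer at least $e^t$, so that $\{i\in\NN_0 : i\ge e^t\} = \{i : i \ge d_t\}$, and apply \eqref{inf} with $d = d_t$, time horizon $\delta_\theta t$ (for a $\delta_\theta\in(0,1)$ to be chosen below), and $\Gamma = \theta t/2 > 0$. This gives, almost surely,
\begin{equation*}
\mathbb{P}\left(\inf_{s\in[0,\delta_\theta t]}\inf_{i\ge e^t} Y_i(s) \le \tfrac{\theta t}{2}\ \big|\ \Y(0)\right)
\le 2\sum_{i\ge d_t}\bar{\Phi}\!\left(\frac{Y_i(0)-\theta t/2}{\sqrt{\delta_\theta t}}\right).
\end{equation*}

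Next I would estimate each summand for $t\ge\log i_\theta$. For such $t$, every index $i\ge d_t\ge i_\theta$ satisfies $\log i\ge\log d_t\ge t$, so by hypothesis $Y_i(0)\ge\theta\log i\ge\theta t$; hence $Y_i(0)-\theta t/2\ge\theta\log i-\tfrac12\theta\log i=\tfrac{\theta}{2}\log i$, and since also $\sqrt{\delta_\theta t}\le\sqrt{\delta_\theta\log i}$,
\begin{equation*}
\frac{Y_i(0)-\theta t/2}{\sqrt{\delta_\theta t}} \ \ge\ \frac{\theta}{2\sqrt{\delta_\theta}}\,\sqrt{\log i}.
\end{equation*}
Applying the Gaussian tail bound $\bar{\Phi}(x)\le\tfrac12 e^{-x^2/2}$ for $x\ge0$, the $i$-th term is at most $\tfrac12\, i^{-\theta^2/(8\delta_\theta)}$. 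Now fix $\delta_\theta := \min\{1/2,\ \theta^2/32\}\in(0,1)$, which makes the exponent $\theta^2/(8\delta_\theta)\ge4$ in every case, so that
\begin{equation*}
2\sum_{i\ge d_t}\bar{\Phi}\!\left(\frac{Y_i(0)-\theta t/2}{\sqrt{\delta_\theta t}}\right)
\ \le\ \sum_{i\ge d_t} i^{-2}\ \le\ \frac{1}{d_t-1}\ \le\ \frac{1}{e^t-1},
\end{equation*}
and for $t\ge\log 2$ the right side is at most $2e^{-t}$.

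Finally, set $T_0 := \max\{\log i_\theta,\ \log 2\}$. The previous two displays show that the conditional probability is bounded by $2e^{-t}$ for all $t\ge T_0$, while for $t\in[0,T_0)$ it is bounded trivially by $1\le e^{T_0}e^{-t}$; hence the asserted inequality holds for all $t\ge0$ with $C_2 := 1$ and $C_1 := \max\{2,\ e^{T_0}\}$.

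I do not expect a genuine obstacle here. The only point requiring attention is the calibration of $\delta_\theta$: it must be small enough that the tail series $\sum_{i\ge d_t} i^{-\theta^2/(8\delta_\theta)}$ not merely converges but is $O(e^{-t})$ once summed from $\lceil e^t\rceil$, yet must remain in $(0,1)$ so that $[0,\delta_\theta t]\subseteq[0,t]$, which is how the lemma will be used downstream. Note also that the cutoff $e^t$ is exploited twice in bounding the argument of $\bar{\Phi}$ --- via $\log i\ge t$ to lower bound $Y_i(0)$, and via $t\le\log i$ to upper bound $\sqrt{\delta_\theta t}$ --- and this is precisely what converts the raw estimate \eqref{inf} into a summable power of $i$.
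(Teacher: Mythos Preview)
Your proof is correct and follows essentially the same route as the paper's: apply the far-field bound \eqref{inf} with $d=\lceil e^t\rceil$, $\Gamma=\theta t/2$ and time horizon $\delta t$, use the hypothesis $Y_i(0)\ge\theta\log i$ together with $\log i\ge t$ for $i\ge e^t$, bound the Gaussian tail, and sum. The only real difference is in how the sum is handled: the paper keeps the exponent $-(\log i)^2\theta^2/(8\delta t)$ and controls $\sum_{i\ge e^t}$ by splitting into blocks $e^{jt}\le i<e^{(j+1)t}$, whereas you make the additional simplification $t\le\log i$ in the denominator to reduce the bound to a pure power $i^{-\theta^2/(8\delta_\theta)}$ before summing. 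Your handling is slightly cleaner and gives explicit constants ($\delta_\theta=\min\{1/2,\theta^2/32\}$, $C_2=1$, $C_1=i_\theta$); both arguments yield constants that depend on $\theta$ and on $i_\theta$, which is also implicit in the paper's threshold $t_0$.
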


\begin{proof}
For any $i \ge e^t \vee i_{\theta}$, $\delta \in (0,1)$, by the assumption on the starting configuration,
\begin{align*}
\bar{\Phi}\left(\frac{Y_i(0) - \theta t/2}{\sqrt{\delta t}}\right) &\le \bar{\Phi}\left(\frac{\theta \log i - \theta t/2}{\sqrt{\delta t}}\right)\\
&\le \bar{\Phi}\left(\frac{\theta \log i}{2\sqrt{\delta t}}\right) \le \frac{1}{\sqrt{2 \pi}} \frac{2\sqrt{\delta t}}{\theta \log i} \exp\left\lbrace - \frac{(\log i)^2 \theta ^2}{8 \delta t}\right\rbrace.
\end{align*}
Hence, using \eqref{inf} with $d = \lceil e^t\rceil$, $\Gamma = \theta t/2$ and $t=\delta t$, we obtain $t_0>0$ such that for all $t \ge t_0$,
\begin{align*}
\mathbb{P}\left( \inf_{s \in [0, \delta t]} \inf_{i \ge e^t}Y_{i}(s) \le \theta t/2 \ \vert \ \Y(0)\right) &\le 2\sum_{i \ge e^t}\bar{\Phi}\left(\frac{Y_{i}(0) - \theta t/2}{\sqrt{\delta t}}\right) \le 2\sum_{i \ge e^t}\exp\left\lbrace - \frac{(\log i)^2 \theta ^2}{8 \delta t}\right\rbrace\\
&=2 \sum_{j=1}^{\infty}\sum_{e^{jt} \le i < e^{(j+1)t}}\exp\left\lbrace - \frac{(\log i)^2 \theta ^2}{8 \delta t}\right\rbrace\\
&\le 2 \sum_{j=1}^{\infty}e^{(j+1)t}\exp\left\lbrace - \frac{\theta ^2j^2 t}{8 \delta}\right\rbrace.
\end{align*}
By the above bound, we can obtain $\delta_{\theta} \in (0,1)$ and $C_1, C_2>0$ such that for all $t \ge 0$,
$$
\mathbb{P}\left( \inf_{s \in [0, \delta_{\theta} t]} \inf_{i \ge e^t}Y_{i}(s) \le \theta t/2 \ \vert \ \Y(0)\right) \le C_1 e^{-C_2 t},
$$
proving the lemma.
\end{proof}

\begin{lemma}\label{farpiasup}
For any $k \in \mathbb{N}, \alpha >0$,
$$
\lim_{t \rightarrow \infty} \mathbb{P}\left(\sup_{s \in [0,t]} Y_{(k)}(s) \ge \alpha t\right) = 0.
$$
\end{lemma}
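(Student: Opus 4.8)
The plan is to control the quantity conditionally on the initial configuration using the Gaussian tail estimate of Lemma~\ref{DJOest}(i) with $\Gamma=\alpha t$ and a \emph{fixed} (independent of $t$) choice of the free index $l$ appearing there, show the resulting bound tends to $0$ almost surely as $t\to\infty$, and then average out $\Y(0)$ by dominated convergence.

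Concretely, I would fix $\alpha>0$ and $k\in\NN$ and choose $l_0=l_0(\alpha)\in\NN$ large enough that $l_0\alpha/3\ge 2$ (e.g.\ $l_0=\lceil 6/\alpha\rceil$). Applying \eqref{sup} with this $l_0$ and $\Gamma=\alpha t$ gives, almost surely,
\begin{equation*}
\PP\left(\sup_{s\in[0,t]} Y_{(k)}(s)\ge\alpha t \,\big|\, \Y(0)\right) \le 2\bar{\Phi}\!\left(\frac{l_0(\alpha t-Y_k(0))/3 - t - \sum_{j=0}^{l_0-1}Y_j(0)}{\sqrt{l_0 t}}\right) + 4(k+1)\bar{\Phi}\!\left(\frac{\alpha t-Y_k(0)}{3\sqrt{t}}\right).
\end{equation*}

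Next I would check that the right-hand side tends to $0$ almost surely as $t\to\infty$. The second term is immediate: since $Y_k(0)$ is a.s.\ finite, $(\alpha t-Y_k(0))/(3\sqrt t)=\tfrac{\alpha}{3}\sqrt t-\tfrac{Y_k(0)}{3\sqrt t}\to\infty$ a.s., hence $\bar{\Phi}$ of it vanishes. For the first term, the numerator equals $(\tfrac{l_0\alpha}{3}-1)t-\tfrac{l_0}{3}Y_k(0)-\sum_{j=0}^{l_0-1}Y_j(0)\ge t-C(\Y(0))$ with $C(\Y(0)):=\tfrac{l_0}{3}Y_k(0)+\sum_{j=0}^{l_0-1}Y_j(0)$ a.s.\ finite and nonnegative (here I use $l_0\alpha/3\ge 2$ and $Y_j(0)\ge 0$); since $l_0$ is fixed, the denominator is $\sqrt{l_0}\,\sqrt t$, so the argument of $\bar{\Phi}$ is at least $(t-C(\Y(0)))/\sqrt{l_0 t}\to\infty$ a.s., so this term vanishes a.s.\ as well.

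Finally, writing $\PP(\sup_{s\in[0,t]} Y_{(k)}(s)\ge\alpha t)=\Expect{\PP(\sup_{s\in[0,t]} Y_{(k)}(s)\ge\alpha t\mid\Y(0))}$ and noting the integrand is bounded by $1$, the dominated convergence theorem (applied along an arbitrary sequence $t_n\to\infty$, and hence as $t\to\infty$) yields the claim. I do not expect any genuine obstacle here; the only point requiring (minor) care is that the free index must be taken \emph{constant} in $t$ and chosen so that the linear term $l_0\alpha t/3$ dominates both the deterministic $-t$ and the a.s.\ finite but otherwise uncontrolled contribution $\sum_{j<l_0}Y_j(0)$ — which is precisely where $\alpha>0$ is used and why a constant $l_0$ suffices.
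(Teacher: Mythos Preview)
Your proof is correct and follows essentially the same approach as the paper: both apply \eqref{sup} with $\Gamma=\alpha t$ and a fixed (in $t$) integer $l$ chosen large enough that $l\alpha/3>1$, so the linear term dominates. The only difference is in handling the random initial data: the paper truncates on the event $\{Y_l(0)\le A\}$ to obtain a deterministic bound and then sends $A\to\infty$, whereas you work conditionally on $\Y(0)$ and pass to the limit by dominated convergence; these are equivalent standard devices and the core argument is identical.
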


\begin{proof}
For any $A \ge 1$, $t >0$, using \eqref{sup} with $\Gamma = \alpha t$, we obtain for any $l \ge k$,
\begin{align*}
\mathbb{P}\left( \sup_{s \in [0, t]} Y_{(k)}(s) \ge \alpha t, Y_{l}(0) \le A\right) \le 2\bar{\Phi}\left(\frac{l(\alpha t - A)/3 - t - Al}{\sqrt{lt}} \right) + 4(k+1)\bar{\Phi}\left(\frac{\alpha t - A}{3\sqrt{t}} \right).
\end{align*}
Hence, choosing and fixing any $l \ge k \vee (18/\alpha)$, for all $t \ge 16A/(3\alpha)$,
\begin{equation*}
\mathbb{P}\left( \sup_{s \in [0, t]} Y_{(k)}(s) \ge \alpha t, Y_{l}(0) \le A\right) \le 2\bar{\Phi}\left(\frac{\sqrt{t}}{2\sqrt{l}}\right) + 4(k+1) \bar{\Phi}\left(\frac{13 \alpha \sqrt{t}}{48}\right).
\end{equation*}
Hence,
$$
\limsup_{t \rightarrow \infty}\mathbb{P}\left( \sup_{s \in [0, t]} Y_{(k)}(s) \ge \alpha t\right) \le \mathbb{P}\left(Y_{l}(0) > A\right).
$$
As $A \ge 1$ is arbitrary, the lemma follows upon taking a limit as $A \rightarrow \infty$ in the above bound.
\end{proof}


Recall that, for $\vv = (v_1,v_2,\dots)^T \in \mathbb{R}_+^{\infty}$, $s(\vv)$ denotes the vector in $\mathbb{R}_+^{\infty}$ whose $i$-th coordinate is $v_1 + \dots + v_i$, $i \in \mathbb{N}$. \textcolor{black}{In the following, for a $\mathbb{R}_+^{\infty}$-valued random vector $\vv$, we will say $s(\vv)$ satisfies \eqref{eq:initinteg} if, almost surely, \eqref{eq:initinteg} holds with $x_0=0$ and $x_i = (s(\vv))_i$ for $i \ge 1$.}

\begin{lemma}\label{start}
Fix $a>0$. Let $\mu$ be a probability measure on $\RR_+^{\infty}$ such that 
 there exists a coupling $(\U, \V_a)$ of $\mu$ and $\pi_a$ such that, almost surely, \eqref{stara} holds. Then, almost surely, there exists $ i_0 \ge 2$ such that $s(\U)_i \ge (4a)^{-1} \log i$ for all $i \ge i_0$. In particular, $s(\U)$ satisfies \eqref{eq:initinteg}, and consequently $\mu \in \cls$. Furthermore, $s(\U \wedge \V_a)$ and $s(\V_a)$ satisfy \eqref{eq:initinteg} as well. 
\end{lemma}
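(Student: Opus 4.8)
The plan is to obtain a pointwise logarithmic lower bound $s(\U)_i \ge (4a)^{-1}\log i$ for all large $i$ by comparing $s(\U)$ with $s(\V_a)$ through the $L^1$-closeness in the first part of \eqref{stara}, and then to note that any such lower bound forces \eqref{eq:initinteg} essentially for free. Only the first condition in \eqref{stara} will be used, and the single piece of genuine probabilistic content is the precise growth rate of $s(\V_a)$.

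\emph{Comparison step.} From the elementary inequalities $U_j \ge V_{a,j} - |V_{a,j}-U_j|$ and $U_j\wedge V_{a,j} \ge V_{a,j} - |V_{a,j}-U_j|$, summing over $j \le d$ gives, for every $d \in \NN$,
$$
s(\U)_d \ge s(\V_a)_d - \sum_{j=1}^d |V_{a,j}-U_j|, \qquad s(\U\wedge\V_a)_d \ge s(\V_a)_d - \sum_{j=1}^d |V_{a,j}-U_j|.
$$
By the first part of \eqref{stara}, the subtracted sum is $o(\log d/\log\log d)$, in particular $o(\log d)$, almost surely.

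\emph{Growth of $s(\V_a)$.} Here $s(\V_a)_d = \sum_{j=1}^d V_{a,j}$, where the $V_{a,j}\sim\operatorname{Exp}(2+ja)$ are independent, so $\mathbb{E}V_{a,j} = (2+ja)^{-1}$ and $\operatorname{Var}(V_{a,j}) = (2+ja)^{-2}$. Since $\sum_j (2+ja)^{-2}<\infty$, Kolmogorov's convergence theorem for series of independent random variables shows that $\sum_j\bigl(V_{a,j}-(2+ja)^{-1}\bigr)$ converges a.s.; combined with $\sum_{j=1}^d (2+ja)^{-1} = a^{-1}\log d + O(1)$ this gives $s(\V_a)_d/\log d \to a^{-1}$ a.s., hence $s(\V_a)_d \ge (2a)^{-1}\log d$ for all large $d$, almost surely. (One can bypass Kolmogorov's theorem: $\operatorname{Var}(s(\V_a)_d)=\sum_{j\le d}(2+ja)^{-2}$ is bounded uniformly in $d$, so Chebyshev along the subsequence $d=2^k$ makes the bad events summable, and monotonicity of $d\mapsto s(\V_a)_d$ then fills in the remaining indices. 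This is the only delicate point, since a direct Borel--Cantelli over all $d$ fails as $\sum_d(\log d)^{-2}=\infty$.)

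\emph{Conclusion.} Combining the two steps, almost surely there is a random $i_0\ge 2$ with $s(\U)_i \ge (2a)^{-1}\log i - o(\log i) \ge (4a)^{-1}\log i$ for $i\ge i_0$, and likewise for $s(\U\wedge\V_a)$, while $s(\V_a)_i\ge(2a)^{-1}\log i$ for $i\ge i_0$; this is the first assertion. Finally, if a nondecreasing $\RR_+^{\infty}$-valued sequence $\mathbf{w}$ satisfies $s(\mathbf{w})_i\ge c\log i$ eventually for some $c>0$, then for every $\alpha>0$ and all sufficiently large $i$ one has $e^{-\alpha (s(\mathbf{w})_i)^2}\le i^{-\alpha c^2\log i}\le i^{-2}$, so $\sum_{i\ge 1}e^{-\alpha (s(\mathbf{w})_i)^2}<\infty$, i.e.\ $s(\mathbf{w})$ satisfies \eqref{eq:initinteg} (with $x_0:=0$). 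Applied to $\mathbf{w}=\U$, this shows that $Y_0(0):=0$, $Y_i(0):=s(\U)_i$ defines a random vector with law in $\cls_0$ whose gap vector is $\U\sim\mu$, whence $\mu\in\cls$; applied to $\U\wedge\V_a$ and to $\V_a$ it yields the two remaining claims. I do not anticipate any serious obstacle beyond the subsequence point noted above.
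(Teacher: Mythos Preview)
Your proof is correct and follows essentially the same route as the paper: compare $s(\U)$ (and $s(\U\wedge\V_a)$) to $s(\V_a)$ via the first condition in \eqref{stara}, and establish $s(\V_a)_d \ge (2a)^{-1}\log d$ eventually by showing the centered partial sums $\sum_{j\le d}\bigl(V_{a,j}-\mathbb{E}V_{a,j}\bigr)$ converge almost surely. The only cosmetic difference is that the paper phrases this last step as convergence of an $L^2$-bounded martingale, whereas you invoke Kolmogorov's convergence theorem for independent summands with summable variances; these are equivalent here.
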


\begin{proof}
We will show that, almost surely, there exists $i'_0 \ge 2$ such that
\begin{equation}\label{m1}
\sum_{j=1}^{i} V_{a,j} \ge \frac{1}{2a} \log i \ \text{ for all } i \ge i'_0.
\end{equation}
It will then follow, by \eqref{stara}, that there is a $i_0 \ge i'_0$ such that for all $i \ge i_0$,
\begin{equation}\label{eq:eq410}
s(\U)_i = \sum_{j=1}^{i} U_j \ge \sum_{j=1}^{i} V_{a,j} - \sum_{j=1}^{i} |V_{a,j} - U_j| \ge \frac{1}{2a} \log i - \frac{\log i}{\log \log i} \ge \frac{1}{4a} \log i.
\end{equation}
The same argument shows that $s(\U\wedge \V_a)_i \ge (4a)^{-1} \log i$ for $i\ge i_0$ as well.
Thus, in order to prove the lemma, it suffices to prove \eqref{m1}. Note that $E_j := (2+ ja) V_{a,j}$, $j \in \mathbb{N}$, are iid exponential random variables with mean one. Define
$$
S_i := \sum_{j=1}^i V_{a,j} = \sum_{j=1}^i \frac{E_j}{2+ ja}, \ i \in \mathbb{N}.
$$
Note that
$$
\mathbb{E}\left(S_i\right) = \sum_{j=1}^i \frac{1}{2+ ja} \ge \int_1^{i+1}\frac{1}{2+as}ds = \frac{1}{a}\log\left(\frac{2 + a(i+1)}{2 + a}\right).
$$
Moreover,
$$
\operatorname{Var}(S_i) = \sum_{j=1}^i \frac{1}{(2+ ja)^2} \le \sum_{j=1}^{\infty} \frac{1}{(2+ ja)^2} < \infty.
$$
Hence, $M_i := S_i - \mathbb{E}(S_i), i \in \mathbb{N}$, is an $L^2$-bounded martingale. Therefore, by the martingale convergence theorem \cite[Theorem 11.10]{klenke2013probability}, there exists a random variable $M_{\infty}$ with 
$\mathbb{E}(M_{\infty}^2) < \infty$ such that
$
M_i \to M_{\infty}$  as  $i \rightarrow \infty$, a.s. and in $L^2$.
Thus, almost surely, there exists $\underline{M} \in (-\infty, \infty)$ and $i''_0 \ge 2$ such that
$$
S_i \ge \frac{1}{a}\log\left(\frac{2 + a(i+1)}{2 + a}\right) - \underline{M}, \ \text{ for all } \ i \ge i''_0.
$$
The estimate in \eqref{m1}, and hence the lemma, is immediate from the above.
\end{proof}

The following lemma uses the above three lemmas to quantify the influence of far away coordinates when the starting configuration satisfies \eqref{stara}.
Note that Lemma \ref{start} shows that, if $\U$ is as in that lemma, then the distribution of $(0,s(\U)^T)^T$ is in $\cls_0$.
\begin{lemma}\label{farpia}
Fix $a>0$. 
Let $\mu$ be a probability measure on $\RR_+^{\infty}$ such that 
 there exists a coupling $(\U, \V_a)$ of $\mu$ and $\pi_a$ such that, almost surely, \eqref{stara} holds.
 Let $\YY(\cdot)$ be the infinite Atlas model given as the unique weak solution of
 \eqref{Atlasdef} with $\YY(0)$ distributed as  $(0,s(\U)^T)^T$. Then there exists $\delta_0 \in (0,1)$ such that,
\begin{align*}
\limsup_{d \rightarrow \infty}\mathbb{P}\left(\inf_{s \in [0, \delta_0 \log d]} \inf_{i \ge d}Y_{i}(s) \le \frac{1}{8a}\log d\right) = 0,
\end{align*}
and for any $k \in \mathbb{N}$
\begin{align*}
\limsup_{d \rightarrow \infty}\mathbb{P}\left(\sup_{s \in [0, \delta_0 \log d]} Y_{(k)}(s) \ge \frac{1}{8a} \log d\right) = 0.
\end{align*}
\end{lemma}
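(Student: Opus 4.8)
The plan is to combine Lemma~\ref{start}, Lemma~\ref{farpiainf}, and Lemma~\ref{farpiasup} through a change of variables $d = e^t$ (equivalently $t = \log d$), and then average over the coupling.

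First I would record what Lemma~\ref{start} gives: almost surely there is a (random) $i_0 \ge 2$ with $s(\U)_i = \sum_{j=1}^i U_j \ge (4a)^{-1}\log i$ for all $i \ge i_0$, and the distribution of $\YY(0) = (0, s(\U)^T)^T$ lies in $\cls_0$ so that the infinite Atlas model $\YY(\cdot)$ is well defined. Since $Y_i(0) = s(\U)_i$ for $i \ge 1$, this says exactly that, on an almost sure event, the hypothesis of Lemma~\ref{farpiainf} holds with $\theta = (4a)^{-1}$, after discarding the finitely many indices below $i_0$ (the finitely many small coordinates only change the constants $C_1, C_2$ and the threshold, not the exponential decay). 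To handle the randomness of $i_0$ cleanly, I would condition on $\{i_0 \le n\}$ for fixed $n$, apply Lemma~\ref{farpiainf} on that event, let $d \to \infty$, and then let $n \to \infty$ using $\mathbb{P}(i_0 \le n) \to 1$; this is the same device used in the proof of Lemma~\ref{farpi}.

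For the first display, set $t = \log d$ and $\theta = (4a)^{-1}$. Lemma~\ref{farpiainf} yields $\delta_\theta \in (0,1)$ and $C_1, C_2 > 0$ with
$$
\mathbb{P}\left(\inf_{s \in [0, \delta_\theta t]} \inf_{i \ge e^t} Y_i(s) \le \tfrac{\theta t}{2} \;\Big|\; \YY(0)\right) \le C_1 e^{-C_2 t}
$$
on the almost sure event from Lemma~\ref{start}. Taking $\delta_0 := \delta_\theta$, reading $e^t = d$, $\delta_\theta t = \delta_0 \log d$, and $\theta t /2 = (8a)^{-1}\log d$, and using $\inf_{i \ge d} \le \inf_{i \ge \lceil d \rceil} = \inf_{i \ge e^{\log d}}$ (with the convention that $i$ ranges over integers $\ge d$), we get a bound of order $C_1 d^{-C_2}$ that vanishes as $d \to \infty$; taking expectations over the coupling (after the conditioning-on-$\{i_0\le n\}$ reduction described above) gives the first claimed limit. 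For the second display, Lemma~\ref{farpiasup} with $\alpha = (8a\,\delta_0)^{-1}$ gives $\mathbb{P}(\sup_{s \in [0,t]} Y_{(k)}(s) \ge \alpha t) \to 0$ as $t \to \infty$; substituting $t = \delta_0 \log d$ turns this into $\mathbb{P}(\sup_{s \in [0, \delta_0 \log d]} Y_{(k)}(s) \ge (8a)^{-1}\log d) \to 0$, which is exactly the second statement. One should double-check that the same $\delta_0$ works for both displays; since Lemma~\ref{farpiasup} holds for \emph{every} $\alpha > 0$, we are free to first fix $\delta_0 = \delta_\theta$ from the infimum estimate and then choose $\alpha$ accordingly in the supremum estimate, so there is no conflict.

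The only mildly delicate point — and the step I expect to require the most care — is the passage from the random index $i_0$ in Lemma~\ref{start} to a deterministic exponential tail bound: Lemma~\ref{farpiainf} as stated presumes the lower bound $Y_i(0) \ge \theta \log i$ holds for \emph{all} $i \ge i_\theta$ with $i_\theta$ deterministic, whereas here $i_0$ is random. The fix is routine (condition on $\{i_0 \le n\}$, note that on this event $Y_i(0) \ge \theta \log i$ for all $i \ge n$, apply the lemma with $i_\theta = n$, and then send $n \to \infty$), but it must be spelled out so that the conditional probability bounds integrate correctly. Everything else is a direct substitution of variables into the three preceding lemmas.
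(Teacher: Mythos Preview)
Your proposal is correct and follows essentially the same route as the paper: fix $\theta=(4a)^{-1}$, take $\delta_0=\delta_\theta$ from Lemma~\ref{farpiainf}, deduce the infimum statement from Lemma~\ref{farpiainf} combined with Lemma~\ref{start}, and the supremum statement from Lemma~\ref{farpiasup} with $\alpha=(8a\delta_0)^{-1}$ and $t=\delta_0\log d$. The paper handles the random $i_0$ by working on the almost sure event $\tilde{\mathcal{E}}=\{\exists\,i_0:\ s(\U)_i\ge(4a)^{-1}\log i\text{ for }i\ge i_0\}$ and invoking dominated convergence for the conditional probability, which is equivalent to your $\{i_0\le n\}$ truncation; your version just makes that step more explicit.
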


\begin{proof}
Take $\delta_0 := \delta_{(4a)^{-1}}$ as defined in Lemma \ref{farpiainf}. The second limit above follows by Lemma \ref{farpiasup}. To prove the first limit, define the event $\tilde{\mathcal{E}} := \{\exists \ i_0 \ge 2 \text{ such that } s(\U)_i \ge (4a)^{-1} \log i \text{ for all } i \ge i_0\}$.
By Lemma \ref{farpiainf}, with $\theta = (4a)^{-1}$,
$$
 \limsup_{d \rightarrow \infty}\mathbb{P}\left(\inf_{s \in [0, \delta_0 \log d]} \inf_{i \ge d}Y_{i}(s) \le \frac{1}{8a}\log d\right) \le \mathbb{P}\left(\tilde{\mathcal{E}}^c\right).
$$
The right hand side above is zero by Lemma \ref{start}. This proves the lemma.
\end{proof}

\section{Analyzing excursions of synchronous couplings}\label{excsec}

In this section, we will estimate the rate of decay of the $L^1$ distance between synchronously coupled infinite ordered Atlas processes by defining and analyzing suitable excursions of the processes. Each such excursion will capture an event which ensures that the $L^1$ distance decreases by a fixed amount. Theorems \ref{piconv} and \ref{piaconv} will then be proved by estimating the number of such excursions on the time interval $[0,t]$ for large $t$.

Let $(\U_1, \U_2)$ be an $\mathbb{R}_+^{\infty} \times \mathbb{R}_+^{\infty}$-valued random variable such that $\U_1 \le \U_2$ and $s(\U_1)$ (and thus $s(\U_2)$) satisfies \eqref{eq:initinteg}. Consider the synchronously coupled copies $\XX^{\U_1}(\cdot)$ and $\XX^{\U_2}(\cdot)$ of the infinite ordered Atlas model (in the sense of Proposition \ref{syncprop} (ii)) starting from $(0,s(\U_1)^T)^T$ and $(0,s(\U_2)^T)^T$ respectively, driven by Brownian motions $\{B_i\}_{i \in \NN_0}$ (that are independent of $\U_1$, $\U_2$).  Let $\Z^{\U_1}(\cdot)$ and $\Z^{\U_2}(\cdot)$ be the associated gap processes as introduced in Proposition \ref{syncprop} (iii), and let $\Delta \Z(\cdot) := \Z^{\U_2}(\cdot) - \Z^{\U_1}(\cdot)$.

Fix $k \in \mathbb{N}$. For $s \ge 0$, define the following stopping times: $T^k_0(s) := s$, and
\begin{align}\label{Tdef}
T^k_j(s) := \inf\{u \ge T^k_{j-1}(s): \Delta Z_{k-j+1}(u) =0\}, \ \ 1 \le j \le k.
\end{align}
Set $\mathcal{T}_k(s) := T^k_k(s)$. We will analyze excursions of $\Delta \Z$
the coupled processes defined by the following stopping times. Fix $\epsilon>0$. Define $\sigma_0 := 0$, $\sigma_1 := \inf\{s \ge 0: \Delta Z_k(s) \ge \epsilon\}$, and for $j \ge 0$,
\begin{align*}
\sigma_{2j+2} &:= \inf\{s \ge \mathcal{T}_k\left(\sigma_{2j+1}\right) : \Delta Z_k(s) = 0\},\\
\sigma_{2j+3} &:= \inf\{s \ge \sigma_{2j+2} : \Delta Z_k(s) \ge \epsilon\},
\end{align*}
where the infimum in \eqref{Tdef} and in the above two definitions is taken to be $\infty$ if the corresponding sets are empty. \textcolor{black}{Although the stopping times above depend on $k$, we suppress this dependence for notational convenience.}
For $T \ge 0$, define the number of excursions until time $T$ by
\begin{equation*}
\N_T :=
\left\{
	\begin{array}{ll}
		 \sup\{j \ge 0: \sigma_{2j+1} \le T\} + 1 & \mbox{if } \sigma_1 \leq T,\\
		0  & \mbox{if } \sigma_1 > T. 
	\end{array}
\right.
\end{equation*}
\textcolor{black}{In the next two lemmas, we work with a fixed $k \in \mathbb{N}$ and stopping times $\{\sigma_j : j \ge 0\}$ defined for this fixed $k$.} 

Each of the above excursions ensure a decrease of the $L^1$ norm of $\Delta \Z(\cdot)$ by a fixed amount as described by the following lemma.

\begin{lemma}\label{l1dec}
Assume $\sum_{j=1}^{\infty} j\Delta Z_j(0) < \infty$. Then, for any $j \ge 0$, when $\sigma_{2j+2} <\infty$,
\begin{equation*}
\sum_{i=1}^{\infty} \Delta Z_i(\sigma_{2j+2}) - \sum_{i=1}^{\infty} \Delta Z_i(\sigma_{2j+1}) \le -\epsilon/2^k.
\end{equation*}
\end{lemma}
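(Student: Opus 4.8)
The plan is as follows. Fix $j\ge 0$ and the index $k$, abbreviate $\sigma:=\sigma_{2j+1}$, $\sigma':=\sigma_{2j+2}$, and set $s_0:=\sigma$ and $s_l:=T^k_l(\sigma)$ for $1\le l\le k$, so that on $\{\sigma'<\infty\}$ all of $\sigma=s_0\le s_1\le\dots\le s_k=\mathcal{T}_k(\sigma)\le\sigma'$ are finite, with $\Delta Z_k(s_0)\ge\epsilon$ by the definition of $\sigma$ and $\Delta Z_{k-l+1}(s_l)=0$ for $1\le l\le k$ by the definition of the cascade times in \eqref{Tdef}. Subtracting the two copies of \eqref{order_SDEX} (the Brownian and drift terms cancel under the synchronous coupling) and integrating, one obtains, with $\Delta L_i:=L^{\U_2}_i-L^{\U_1}_i$, $G_i(\cdot):=-\Delta L_i(\cdot)$, and $G_0\equiv0$, the deterministic identity (the per-coordinate form of \eqref{l0})
\begin{equation}\label{eq:pc-id}
\Delta Z_i(t)=\Delta Z_i(0)-G_i(t)+\tfrac12 G_{i-1}(t)+\tfrac12 G_{i+1}(t),\qquad i\ge1,\ t\ge0.
\end{equation}
By Proposition \ref{syncprop}(iii) each $G_i$ is nonnegative, nondecreasing and vanishes at $0$, while $\Delta Z_i(\cdot)\ge0$ and $\Delta Z_i(0)\ge0$. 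Since $\sum_j j\,\Delta Z_j(0)<\infty$, Lemma \ref{l1control} applies, and as $\Delta L_1=-G_1$ it gives
\[
\sum_{i=1}^{\infty}\Delta Z_i(\sigma')-\sum_{i=1}^{\infty}\Delta Z_i(\sigma)=\tfrac12\big(\Delta L_1(\sigma')-\Delta L_1(\sigma)\big)=-\tfrac12\big(G_1(\sigma')-G_1(\sigma)\big).
\]
Because $G_1$ is nondecreasing and $\sigma'\ge s_k$, the lemma follows once we prove $G_1(s_k)-G_1(s_0)\ge\epsilon\,2^{-(k-1)}$.

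For $1\le l\le k$ and $i\ge0$ write $g_i^{(l)}:=G_i(s_l)-G_i(s_{l-1})\ge0$, so $G_1(s_k)-G_1(s_0)=\sum_{l=1}^{k}g_1^{(l)}$. The heart of the matter is the cascade inequality
\begin{equation}\label{eq:casc}
\sum_{l=1}^{k-m+1}g_m^{(l)}\ \ge\ \tfrac12\sum_{l=1}^{k-m+1}g_{m+1}^{(l)},\qquad 1\le m\le k,
\end{equation}
proved as follows. Applying \eqref{eq:pc-id} to coordinate $m$ on the segment $[s_{k-m},s_{k-m+1}]$, on which $\Delta Z_m$ runs from $\Delta Z_m(s_{k-m})$ down to $0$, gives $g_m^{(k-m+1)}=\Delta Z_m(s_{k-m})+\tfrac12 g_{m-1}^{(k-m+1)}+\tfrac12 g_{m+1}^{(k-m+1)}\ge\Delta Z_m(s_{k-m})+\tfrac12 g_{m+1}^{(k-m+1)}$; applying \eqref{eq:pc-id} to coordinate $m$ on each earlier segment $[s_{p-1},s_p]$, $1\le p\le k-m$, and discarding the nonnegative quantities $\Delta Z_m(s_0)$ and $\tfrac12 g_{m-1}^{(p)}$ gives $\Delta Z_m(s_{k-m})\ge\sum_{p=1}^{k-m}\big(\tfrac12 g_{m+1}^{(p)}-g_m^{(p)}\big)$. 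Combining these two bounds and adding $\sum_{p=1}^{k-m}g_m^{(p)}$ to both sides produces \eqref{eq:casc}. Discarding the nonnegative term $g_{m+1}^{(k-m+1)}$ from the right side of \eqref{eq:casc} yields $\sum_{l=1}^{k-m+1}g_m^{(l)}\ge\tfrac12\sum_{l=1}^{k-m}g_{m+1}^{(l)}$ for $1\le m\le k-1$, and chaining this from $m=1$ to $m=k-1$ gives $\sum_{l=1}^{k}g_1^{(l)}\ge 2^{-(k-1)}g_k^{(1)}$. Finally, \eqref{eq:pc-id} for coordinate $k$ on $[s_0,s_1]$ gives $g_k^{(1)}=\Delta Z_k(s_0)+\tfrac12 g_{k-1}^{(1)}+\tfrac12 g_{k+1}^{(1)}\ge\Delta Z_k(s_0)\ge\epsilon$, so $G_1(s_k)-G_1(s_0)\ge\epsilon\,2^{-(k-1)}$, as needed. (When $k=1$ the chain is empty and one reads $g_1^{(1)}\ge\epsilon$ off the segment identity directly.)

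The only genuine difficulty is the bookkeeping in \eqref{eq:casc}: on its way to being pushed down to $0$, the coordinate $\Delta Z_m$ absorbs downward kicks $-g_m^{(p)}$ coming from increments of its own collision local time on \emph{every} earlier segment, so a direct estimate of the single increment $g_m^{(k-m+1)}$ would carry uncontrolled error terms. Estimating the partial sums $\sum_{l=1}^{k-m+1}g_m^{(l)}$ instead is precisely what makes those errors telescope against the term $\sum_{p=1}^{k-m}g_m^{(p)}$ already present on the left, leaving exactly one clean factor $\tfrac12$ per level of the cascade. Everything else is just the deterministic identity \eqref{eq:pc-id}, the monotonicity statements of Proposition \ref{syncprop}(iii), and the $L^1$-distance representation of Lemma \ref{l1control}.
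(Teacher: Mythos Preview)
Your proof is correct and follows essentially the same approach as the paper: both reduce via Lemma~\ref{l1control} to showing $G_1(s_k)-G_1(s_0)\ge \epsilon/2^{k-1}$, and both obtain this by applying the per-coordinate identity \eqref{eq:pc-id} at index $i$ on the interval ending at $s_{k-i+1}$, using $\Delta Z_i(s_{k-i+1})=0$ together with the sign and monotonicity properties from Proposition~\ref{syncprop}(iii) to extract a factor $\tfrac12$ at each step of the cascade. The only difference is organizational: the paper applies the identity once on the cumulative interval $[s_0,s_{k-i+1}]$ (directly discarding $\Delta Z_i(s_0)\ge 0$ and the $G_{i-1}$-term) to obtain $G_i(s_{k-i+1})-G_i(s_0)\ge \tfrac12\big(G_{i+1}(s_{k-i})-G_{i+1}(s_0)\big)$ in a single stroke, whereas you decompose into per-segment increments $g_i^{(l)}$ and then reassemble via telescoping --- this yields the same inequality but with more bookkeeping than is actually needed.
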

\begin{proof}
By Lemma \ref{l1control},
\begin{equation*}
\sum_{i=1}^{\infty} \Delta Z_i(t) = \sum_{i=1}^{\infty} \Delta Z_i(0) + \frac{1}{2} \Delta L_1(t), \ t \ge 0.
\end{equation*}
Thus, it suffices to show that
\begin{equation}\label{ld1}
\Delta L_1(\sigma_{2j+2}) - \Delta L_1(\sigma_{2j+1}) \le -\epsilon/2^{k-1}.
\end{equation}
Recalling the stopping times $\{T^k_l(\cdot)\}_{0 \le l \le k}$ from \eqref{Tdef}, note that for any $1 \le i \le k$, $j \in \mathbb{N}_0$,
\begin{multline}\label{ld2}
0= \Delta Z_i\left(T^k_{k-i+1}\left(\sigma_{2j+1}\right)\right) = \Delta Z_i(\sigma_{2j+1}) + \left(\Delta L_i\left(T^k_{k-i+1}\left(\sigma_{2j+1}\right)\right) - \Delta L_i(\sigma_{2j+1})\right)\\
\qquad \qquad \qquad \qquad \qquad \qquad -\frac{1}{2} \left(\Delta L_{i-1}\left(T^k_{k-i+1}\left(\sigma_{2j+1}\right)\right) - \Delta L_{i-1}(\sigma_{2j+1})\right)\\
  -\frac{1}{2} \left(\Delta L_{i+1}\left(T^k_{k-i+1}\left(\sigma_{2j+1}\right)\right) - \Delta L_{i+1}(\sigma_{2j+1})\right).
\end{multline}
Recalling that $\U_1 \le \U_2$, by Proposition \ref{syncprop} (iii), for each $i \in \mathbb{N}$, $j \in \mathbb{N}_0$, $\Delta Z_i(\sigma_{2j+1}) \ge 0$ and $\Delta L_i(\cdot)$ is non-positive and non-increasing. Thus, from \eqref{ld2} we obtain for any $1 \le i \le k$,
\begin{align*}
-\left(\Delta L_i\left(T^k_{k-i+1}\left(\sigma_{2j+1}\right)\right) - \Delta L_i(\sigma_{2j+1})\right) &\ge -\frac{1}{2} \left(\Delta L_{i+1}\left(T^k_{k-i+1}\left(\sigma_{2j+1}\right)\right) - \Delta L_{i+1}(\sigma_{2j+1})\right)\\
&\ge -\frac{1}{2} \left(\Delta L_{i+1}\left(T^k_{k-i}\left(\sigma_{2j+1}\right)\right) - \Delta L_{i+1}(\sigma_{2j+1})\right).\notag
\end{align*}
Using the above bound recursively for $i=1,\dots, k-1$, we get for any $j \in \mathbb{N}_0$,
\begin{align}\label{ld3}
-\left(\Delta L_1\left(T^k_{k}\left(\sigma_{2j+1}\right)\right) - \Delta L_1(\sigma_{2j+1})\right) &\ge -\frac{1}{2} \left(\Delta L_{2}\left(T^k_{k-1}\left(\sigma_{2j+1}\right)\right) - \Delta L_{2}(\sigma_{2j+1})\right)\\
&\ge \cdots \ge -\frac{1}{2^{k-1}}\left(\Delta L_{k}\left(T^k_{1}\left(\sigma_{2j+1}\right)\right) - \Delta L_{k}(\sigma_{2j+1})\right).\notag
\end{align}
Moreover, from \eqref{ld2} with $i=k$, again using Proposition \ref{syncprop} (iii),
\begin{equation}\label{ld4}
-\left(\Delta L_{k}\left(T^k_{1}\left(\sigma_{2j+1}\right)\right) - \Delta L_{k}(\sigma_{2j+1})\right) \ge \Delta Z_k(\sigma_{2j+1}) \ge \epsilon.
\end{equation}
Recalling $\sigma_{2j+2} \ge \mathcal{T}_k\left(\sigma_{2j+1}\right) = T^k_k\left(\sigma_{2j+1}\right)$ and $\Delta L_1(\cdot)$ is non-increasing, \eqref{ld1} follows from \eqref{ld3} and \eqref{ld4}. This proves the lemma.
\end{proof}

The following lemma will be used to control the maximum length of excursions in a large time interval.

\begin{lemma}\label{exclength}
Suppose there exists $D \ge 1$ such that the distribution of $\U_2/D$ is stochastically dominated by $\pi$.
Let $T_k := \inf\{S \ge e: 48D^2 k(k+1)^2 \log T \le T \text{ for all } T \ge S\}$. Then, for any $n \in \mathbb{N}$, $T \ge T_k$,
\begin{multline*}
\mathbb{P}\left(\sigma_1 \le T, \ \sigma_{2j+2} - \sigma_{2j+1} > 48D^2k(k+1)^2 \log T \text{ for some } 0 \le j \le \N_T - 1\right)\\
\le \mathbb{P}\left(\N_T > n\right) + 5kn T^{- 2}.
\end{multline*}
\end{lemma}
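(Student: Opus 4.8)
Write $L:=48D^2k(k+1)^2\log T$ and $L':=L/(k+1)=48D^2k(k+1)\log T$. The plan is to reduce the statement to a single estimate: the probability that one coordinate of $\Delta\Z$ stays strictly positive for a time $L'$. First I would peel off the event $\{\N_T>n\}$: on its complement every index $j$ with $0\le j\le\N_T-1$ satisfies $j\le n-1$ and $\sigma_{2j+1}\le T$, so the event in question is contained in $\{\N_T>n\}\cup\bigcup_{j=0}^{n-1}\{\sigma_{2j+1}\le T,\ \sigma_{2j+2}-\sigma_{2j+1}>L\}$, and it suffices to bound $\mathbb{P}(\sigma_{2j+1}\le T,\ \sigma_{2j+2}-\sigma_{2j+1}>L)$ by $5kT^{-3}$ for each fixed $j$ (then sum over the $n$ values of $j$ and use $T\ge1$). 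Next I decompose an excursion: on $\{\sigma_{2j+1}\le T\}$ the ordered stopping times $\sigma_{2j+1}\le T^k_1(\sigma_{2j+1})\le\cdots\le T^k_k(\sigma_{2j+1})=\mathcal{T}_k(\sigma_{2j+1})\le\sigma_{2j+2}$ split $[\sigma_{2j+1},\sigma_{2j+2}]$ into $k+1$ sub-intervals, the $l$-th being the time needed, starting from the stopping time $\tau_l$ at its left endpoint, for $\Delta Z_{i(l)}$ to hit $0$, where $i(l)=k-l+1$ for $l\le k$ and $i(k+1)=k$. If the excursion exceeds $L$ then some sub-interval exceeds $L'$; taking the first such $l$ gives $\tau_l\le\sigma_{2j+1}+(l-1)L'\le T+L\le 2T$, the last inequality because $T\ge T_k$ forces $L\le T$. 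Hence it is enough to show, for each $l$,
$$
\mathbb{P}\bigl(\tau_l\le 2T,\ \Delta Z_{i(l)}(u)>0\ \text{for all }u\in[\tau_l,\tau_l+L']\bigr)\le 5T^{-3}.
$$

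The second ingredient controls the heights of the relevant gaps (note this is where the domination by $\pi$ enters, and the excursion-length bound does not need any integrability of $\Delta\Z(0)$). Since the law of $\U_2/D$ is stochastically dominated by $\pi$, by Strassen and Proposition~\ref{syncprop}(ii)--(iv) we may realise, with the same driving Brownian motions, the ordered model $\XX^{\U_2}$, the drift-$1$ ordered model started from some $\hat\U\ge\U_2$ a.s.\ with $\hat\U\sim D\pi$, and the drift-$1/D$ ordered model started from the same $\hat\U$ (legitimate since $1/D\le1$); by Proposition~\ref{syncprop}(iii)--(iv) the three gap processes are coordinate-wise increasing in that order. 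Because $D\pi=\bigotimes_i\operatorname{Exp}(2/D)$ is stationary for the drift-$1/D$ Atlas model (by Brownian scaling from the stationarity of $\pi=\pi_0$ for the drift-$1$ model), the gaps of the last process are distributed as $D\pi$ at every fixed time, so a routine discretisation together with Gaussian and collision-local-time tail bounds yields an event $\mathcal{G}$ with $\mathbb{P}(\mathcal{G}^c)\le kT^{-3}$ on which $\Delta Z_i(u)\le Z^{\U_2}_i(u)\le 4D\log T$ for all $u\le2T$ and $1\le i\le k$. Outside $\mathcal{G}$ we bound by $\mathbb{P}(\mathcal{G}^c)$.

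It then remains to bound, on $\mathcal{G}$, the conditional probability given $\hat{\clf}_\tau$ that $Z^{\U_2}_i$ (which dominates $\Delta Z_i$) stays positive throughout $[\tau,\tau+L']$, where $\tau=\tau_l\le2T$ and $Z^{\U_2}_i(\tau)\le 4D\log T$. For $i=1$ this is straightforward: $Z^{\U_2}_1$ is a reflected Brownian motion with drift $-1$, pushed further down by $-\tfrac12 L^{\U_2}_2$, so if it stays positive then $Z^{\U_2}_1(\tau)-L'+\sqrt2(\beta_{\tau+L'}-\beta_\tau)>0$; by the strong Markov property this has conditional probability $\bar{\Phi}\bigl((L'-Z^{\U_2}_1(\tau))/\sqrt{2L'}\bigr)\le\bar{\Phi}(\sqrt{6\log T})\le T^{-3}$ on $\mathcal{G}$, since $L'\ge12\log T$ and $Z^{\U_2}_1(\tau)\le L'/2$. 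For $i\ge2$ the mechanism is that, while $Z^{\U_2}_i$ stays positive, its own collision local time $L^{\U_2}_i$ is frozen, so
$$
0<Z^{\U_2}_i(\tau+L')=Z^{\U_2}_i(\tau)+\bigl(\text{martingale increment over }[\tau,\tau+L']\bigr)-\tfrac12\Delta_{i-1}-\tfrac12\Delta_{i+1},
$$
with $\Delta_j:=L^{\U_2}_j(\tau+L')-L^{\U_2}_j(\tau)\ge0$; since the martingale increment is $O\bigl(D\sqrt{k(k+1)}\,\log T\bigr)$ with conditional probability $1-O(T^{-3})$ and $Z^{\U_2}_i(\tau)=O(D\log T)$, this is impossible once $\Delta_{i-1}$ is at least a fixed positive multiple of $L'$ minus an $O(D\log T\cdot\mathrm{poly}(k))$ correction.

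\textbf{Main obstacle.} The crux — and the only step that requires genuine work rather than bookkeeping — is proving that each collision local time $L^{\U_2}_j$, $1\le j\le k$, grows at least linearly over every sub-window of $[0,2T]$, with a growth rate bounded below \emph{uniformly in $j$} (the true behaviour, since in equilibrium every collision local time grows at rate $2$). I would prove this by induction on $j$: the base case $j=1$ comes from the drift $-1$, namely $L^{\U_2}_1(u)-L^{\U_2}_1(v)\ge(u-v)-Z^{\U_2}_1(v)-\sqrt2\,|\beta_u-\beta_v|$, and the inductive step passes the bound upward through the pathwise identity $L^{\U_2}_j(u)-L^{\U_2}_j(v)\ge\tfrac12\bigl(L^{\U_2}_{j-1}(u)-L^{\U_2}_{j-1}(v)\bigr)+\tfrac12\bigl(L^{\U_2}_{j+1}(u)-L^{\U_2}_{j+1}(v)\bigr)-Z^{\U_2}_j(v)-|\text{mart}|$, controlling the accumulating $\sqrt{\cdot}$- and height-errors on $\mathcal{G}$; handling the harmonic (two-sided) recursion so that the rate does not degrade in $j$ is the delicate point. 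This is precisely where the factor $D^2$ (the diffusive time scale for erasing a discrepancy of size $\sim D\log T$) and the polynomial-in-$k$ constant in $L$ (one factor $k+1$ for the number of sub-intervals per excursion, the rest to make the window $L'$ dominate the $\sqrt{k(k+1)}$-size martingale and the local-time-induction corrections) originate. Once the core estimate is in hand, summing over $0\le j\le n-1$ and $1\le l\le k+1$ and adding $\mathbb{P}(\N_T>n)+\mathbb{P}(\mathcal{G}^c)$ gives the asserted bound $\mathbb{P}(\N_T>n)+5knT^{-2}$ after absorbing absolute constants using $T\ge T_k\ge e$.
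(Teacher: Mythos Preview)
Your overall scaffolding matches the paper exactly: peel off $\{\N_T>n\}$, split each excursion into $k+1$ sub-intervals via the stopping times $T^k_l$, dominate $\Delta\Z\le\Z^{\U_2}\le\Z^{\{D\}}$ by the stationary drift-$1/D$ gap process, and reduce to a uniform bound on the probability that a single gap stays positive for a window of length $L'$. The divergence is entirely in how you handle that last probability for $i\ge2$, and there the gap you yourself flag is real.

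Your inductive scheme, as written, degrades exponentially in $i$. Dropping the nonnegative $\tfrac12\Delta L_{j+1}$ term in your recursion gives $\Delta L_j\ge\tfrac12\Delta L_{j-1}-(\text{errors})$, so iterating from the base case yields $\Delta L_{i-1}\gtrsim 2^{-(i-2)}L'$ rather than a rate independent of $i$. Retaining the two-sided term does not obviously help: the inequality $\Delta L_j\ge\tfrac12\Delta L_{j-1}+\tfrac12\Delta L_{j+1}-\epsilon$ is a discrete superharmonic condition, and there is no mechanism in your argument forcing the solution to stay bounded below uniformly in $j$ without boundary information at $j=k$ (which you do not have). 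So ``handling the harmonic recursion so that the rate does not degrade'' is not a detail to be filled in but the entire content of the estimate; with only the polynomial budget $L'=48D^2k(k+1)\log T$ this route does not close.

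The paper sidesteps the induction completely with an averaging identity. Working with the stationary dominating process $\XX^{\{D\}}$ and a \emph{deterministic} grid $\{t_l\}$, on the event that $Z^{\{D\}}_i$ stays positive on $[t_l,t_l+s]$ the local time $L^{\{D\}}_i$ is frozen, and then summing the SDE for $q=0,\dots,i-1$ makes every intermediate local time telescope, leaving
\[
\frac{1}{i}\sum_{q=0}^{i-1}X^{\{D\}}_q(t_l+s)=\frac{s}{Di}+\frac{1}{i}\sum_{q=0}^{i-1}X^{\{D\}}_q(t_l)+\frac{1}{\sqrt i}\,\tilde B_1(s).
\]
Since $X^{\{D\}}_{i-1}\ge\tfrac1i\sum_{q<i}X^{\{D\}}_q$ and $X^{\{D\}}_i(t_l+s)\le X^{\{D\}}_i(t_l)+\tilde B_2(s)$, one gets a direct linear drift $s/(Di)$ pushing the gap to zero, with no recursion at all. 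The height term $X^{\{D\}}_i(t_l)-\tfrac1i\sum X^{\{D\}}_q(t_l)$ is controlled by stationarity of $\Z^{\{D\}}$ at the deterministic times $t_l$ via a union bound (your uniform-in-time event $\mathcal G$ is not needed and is not obviously ``routine''; the paper only ever evaluates gaps at the $\sim T/\log T$ grid points). Taking $s=24D^2i(i+1)\log T\le L'$ makes the drift dominate both the height $O(D(i{+}1)\log T)$ and the Gaussian noise, yielding the single-window bound $\bar\Phi(\sqrt{6\log T})$ uniformly in $i$. This is the missing idea.
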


\begin{proof}
We can assume without loss of generality that, in addition to $\U_1$, $\U_2$, we are given another $\RR_+^{\infty}$-valued random variable $\V$ (on the same probability space) such that
$\U_1 \le \U_2 \le D\V$. In addition to infinite ordered Atlas models $\XX^{\U_i}$, with $\XX^{\U_i}(0) = \U_i$ for $i=1,2$, driven by Brownian motions $\{B_i\}$, we also construct infinite ordered Atlas models $\XX^{D\V}$ and $\XX^{[\gamma]}$ with drifts $1$  and $\gamma = 1/D$ respectively (the latter defined as in Proposition \ref{syncprop} (iv)), satisfying $\XX^{D\V}(0) = \XX^{[\gamma]}(0) = D\V$, and 
driven by the same Brownian motions $\{B_i\}$ (that are independent of $\U_1, \U_2, \V$). The associated gap processes are denoted as $\Z^{D\V}$ and $\Z^{[\gamma]}$. In order to note the dependence of $\gamma$ on $D$, we will denote the  processes $\XX^{[\gamma]}$, $\Z^{[\gamma]}$ as $\XX^{\{D\}}$,
$\Z^{\{D\}}$ respectively.

By Proposition \ref{syncprop} (iii) and (iv),
\begin{equation}\label{e1}
\Z^{\U_2}(t) \le \Z^{D\V}(t) \le \Z^{\{D\}}(t) \ \text{ for all } t \ge 0.
\end{equation}
For $u \ge 0$, define
$$
S_i(u) := \inf\{s \ge u: Z^{\{D\}}_i(s) = 0\}  \text{ for } 1 \le i \le k, \ \ S(u) := \max_{1 \le i \le k}S_i(u).
$$
Write $A:= 24D^2 k(k+1)$. Note that $2A(k+1) \log T \le T$ for all $T \ge T_k$. Suppose for some $j \ge 0$, $T \ge T_k$, the following event holds:
$$
\mathcal{E}_{T,j} := \{\sigma_{2j+1} \le T, \sigma_{2j+2} - \sigma_{2j+1} > 2A(k+1) \log T\}.
$$ 
Then, either
\begin{equation}\label{e2}
\text{there is } 1 \le l \le k \text{ such that } T^k_l(\sigma_{2j+1}) - T^k_{l-1}\left(\sigma_{2j+1}\right) > 2A \log T,
\end{equation}
or
\begin{equation}\label{e3}
\sigma_{2j+2} - \mathcal{T}_k\left(\sigma_{2j+1}\right) = T^k_1\left(\mathcal{T}_k\left(\sigma_{2j+1}\right)\right) - \mathcal{T}_k\left(\sigma_{2j+1}\right)  > 2A \log T.
\end{equation}
Suppose \eqref{e2} holds. Let $l$ be the minimum integer in $\{1,\dots,k\}$ for which \eqref{e2} holds. Then, recalling $\sigma_{2j+1} \le T$,
$$
T^k_{l-1}\left(\sigma_{2j+1}\right) \le \sigma_{2j+1} + 2A(l-1) \log T \le T + 2A(k-1) \log T \le 2T - A\log T
$$
where the last inequality is true because $T \ge T_k$. Similarly, if \eqref{e3} holds and \eqref{e2} does not hold for any $1 \le l \le k$, then
$$
\mathcal{T}_k\left(\sigma_{2j+1}\right) \le \sigma_{2j+1} + 2Ak \log T \le 2T - A \log T.
$$
Observe that for any $1 \le l \le k$, $j \ge 0$, by \eqref{e1},
\begin{align*}
T^k_l(\sigma_{2j+1}) &:= \inf\{u \ge T^k_{l-1}\left(\sigma_{2j+1}\right) : \Delta Z_{k-l+1}(u) = 0\}\\
& \le \inf\{u \ge T^k_{l-1}\left(\sigma_{2j+1}\right) : Z^{\{D\}}_{k-l+1}(u) = 0\} = S_{k-l+1}\left(T^k_{l-1}\left(\sigma_{2j+1}\right)\right),
\end{align*}
and similarly,
$$
\sigma_{2j+2} = T^k_1\left(\mathcal{T}_k\left(\sigma_{2j+1}\right)\right) \le S_k\left(\mathcal{T}_k\left(\sigma_{2j+1}\right)\right).
$$
Hence, for any $T \ge T_k$,
\begin{equation*}
\mathcal{E}_{T,j} \subseteq \mathcal{E}'_{T} := \{\exists \ t \in [0, 2T - A \log T] \text{ such that } S(t) - t > 2A \log T\}.
\end{equation*}
From this observation and the union bound, for any $n \in \mathbb{N}$, $T \ge T_k$,
\begin{align}\label{e4}
&\mathbb{P}\left(\sigma_1 \le T, \ \sigma_{2j+2} - \sigma_{2j+1} > 2A(k+1) \log T \text{ for some } 0 \le j \le \N_T - 1\right)\\
&\le \mathbb{P}\left(\N_T > n\right) + \sum_{j=0}^{n-1}\mathbb{P}\left(\mathcal{E}_{T,j}\right)
\le \mathbb{P}\left(\N_T > n\right) + n \mathbb{P}\left(\mathcal{E}'_{T}\right).\notag
\end{align}
In the rest of the proof, we will estimate $\mathbb{P}\left(\mathcal{E}'_{T}\right)$. 

For any $T>0$, define $n(T) := \lceil2T/(A \log T) \rceil \vee 2$ and
$$
t_l := A l \log T, \ 0 \le l \le n(T).
$$
Then $\cup_{l=0}^{n(T)} [t_{l}, t_{l+1}] \supset [0,2T]$ and each interval $[t_{l}, t_{l+1}]$ is
of length $A \log T$.

 Suppose $S(t) - t > 2A \log T$ for some $t \in [t_l, t_{l+1}]$, $0 \le l \le n(T) - 2$. Then there exists $1 \le i \le k$ such that $S_i(t_{l+1}) - t_{l+1} > A \log T$. Hence, for any $T \ge T_k$,
\begin{equation}\label{e5}
\mathbb{P}\left(\mathcal{E}'_{T}\right) \le \mathbb{P}\left(\exists \ 0 \le l \le n(T) - 2, 1 \le i \le k \text{ such that } S_i(t_{l+1}) - t_{l+1} > A \log T\right).
\end{equation}
Note that, by scaling properties of the Atlas model, the process $\Z^{\{D\}}(\cdot)$ is stationary with
$
\Z^{\{D\}}(t) \sim \otimes_{i=1}^{\infty}\operatorname{Exp}(2/D)
$
for all $t \ge 0$. Define for any $T \ge e$ the event
$$
\mathcal{A}_T := \{Z^{\{D\}}_i(t_l) \le 24 D\log T \text{ for all } 1 \le i \le k, \ 0 \le l \le n(T) - 1\}.
$$
By the stationarity of $\Z^{\{D\}}(\cdot)$ and the union bound, for any $T \ge e$,
\begin{equation}\label{e6}
\mathbb{P}\left(\mathcal{A}_T^c\right) \le k n(T) e^{-48 \log T} \le k\left(2 + \frac{2T}{A\log T}\right)T^{-48} \le 4kT^{-47}.
\end{equation}
Denote by 
$\mathbf{L}^{\{D\}} = (L^{\{D\}}_0(\cdot), L^{\{D\}}_1(\cdot), \dots)$ 
the collision local times associated with the infinite ordered Atlas model  $\XX^{\{D\}}$ (with drift $\gamma = 1/D$).
Then, for any $0 \le l \le n(T) - 1$, $1 \le i \le k$, $t \ge 0$, $T \ge e$, 
\begin{equation}\label{e7}
\{S_i(t_l) - t_l >t\} = \{X^{\{D\}}_{i-1}(s) < X^{\{D\}}_i(s) \text{ for all } s \in [t_l, t_l + t]\}.
\end{equation}
If the above event holds, then \textcolor{black}{$L_i(t_l + s) - L_i(t_l) = 0$} for all $0 \le s \le t$. Hence, using the fact that $X^{\{D\}}_j(\cdot)$'s are ranked and applying \eqref{order_SDEgamma}, we obtain for any $0 \le s \le t$,
on the event in \eqref{e7},
\begin{align*}
X^{\{D\}}_{i-1}(s+ t_l) \ge \frac{1}{i}\sum_{q=0}^{i-1} X^{\{D\}}_q(s + t_l) = \frac{s}{D i} + \frac{1}{i}\sum_{q=0}^{i-1} X^{\{D\}}_q(t_l) + \frac{1}{i} \sum_{q=0}^{i-1}\left(B_q(s + t_l) - B_q(t_l)\right),
\end{align*}
and
$$
X^{\{D\}}_{i}(s+ t_l) \le X^{\{D\}}_{i}(t_l) + \left(B_i(s + t_l) - B_i(t_l)\right).
$$
Moreover, if $\mathcal{A}_T$ holds, then for any $0 \le l \le n(T) - 1$, $1 \le q \le i-1$,
$$
X^{\{D\}}_i(t_l) - X^{\{D\}}_q(t_l) = \sum_{p=q+1}^{i} Z^{\{D\}}_p(t_l) \le 24 D(i-q) \log T.
$$
Hence, defining the new Brownian motions
$$
\tilde{B}_1(s) := \frac{1}{\sqrt{i}} \sum_{q=0}^{i-1}\left(B_q(s + t_l) - B_q(t_l)\right), \ \tilde{B}_2(s) := \left(B_i(s + t_l) - B_i(t_l)\right), \ 0 \le s \le t,
$$
and using the above observations in \eqref{e7}, we obtain for any $0 \le l \le n(T) - 1$, $1 \le i \le k$, $t \ge 0$, $T \ge e$,
\begin{multline*}
\mathbb{P}\left(S_i(t_l) - t_l >t, \ \mathcal{A}_T\right) \le \mathbb{P}\left(\frac{t}{D i} + \frac{1}{i}\sum_{q=0}^{i-1} X^{\{D\}}_q(t_l) + \frac{1}{\sqrt{i}}\tilde{B}_1(t) <  X^{\{D\}}_{i}(t_l) + \tilde{B}_2(t), \ \mathcal{A}_T\right)\\
= \mathbb{P}\left(\frac{1}{\sqrt{i}}\tilde{B}_1(t) - \tilde{B}_2(t) < - \frac{t}{D i} + \frac{1}{i}\sum_{q=0}^{i-1} \left(X^{\{D\}}_i(t_l) - X^{\{D\}}_q(t_l)\right), \ \mathcal{A}_T\right)\\
\le \mathbb{P}\left(\frac{1}{\sqrt{i}}\tilde{B}_1(t) - \tilde{B}_2(t) < - \frac{t}{D i} + \frac{1}{i}\sum_{q=0}^{i-1} 24 D(i-q) \log T\right)\\
= \mathbb{P}\left(\frac{1}{\sqrt{i}}\tilde{B}_1(t) - \tilde{B}_2(t) < - \frac{t}{D i} + 12D(i+1) \log T\right).
\end{multline*}
Hence, taking $t=24D^2 i (i+1) \log T$ and using the fact that $\frac{1}{\sqrt{i}}\tilde{B}_1(24D^2 i (i+1) \log T) - \tilde{B}_2(24D^2 i (i+1) \log T)$ is a mean zero normal random variable with variance $24D^2(i+1)^2 \log T$, we obtain for any $0 \le l \le n(T) - 1$, $1 \le i \le k$, $T \ge e$,
\begin{multline}\label{e8}
\mathbb{P}\left(S_i(t_l) - t_l > 24D^2 i (i+1) \log T, \ \mathcal{A}_T\right) \le \bar\Phi\left(-\frac{\sqrt{24 \log T}}{2}\right)\\
\le \frac{2}{\sqrt{48\pi \log T}}\exp\left\lbrace - 24\log T/8\right\rbrace \le \frac{1}{2\sqrt{3\pi}} T^{- 3}.
\end{multline}
Thus, recalling that $A = 24D^2 k(k+1)$, using \eqref{e6} and \eqref{e8} in \eqref{e5}, we obtain for any $T \ge T_k$,
\begin{align*}
\mathbb{P}\left(\mathcal{E}'_{T}\right) &\le \mathbb{P}\left(\exists \ 0 \le l \le n(T) - 2, 1 \le i \le k \text{ such that } S_i(t_{l+1}) - t_{l+1} > A \log T\right)\\
&\le \sum_{l=0}^{n(T) - 2} \sum_{i=1}^k  \mathbb{P}\left(S_i(t_{l+1}) - t_{l+1} > 24D^2 i(i+1) \log T, \ \mathcal{A}_T\right) + \mathbb{P}\left(\mathcal{A}_T^c\right)\\
&\le n(T) k \frac{1}{2\sqrt{3\pi}} T^{- 3} +  4kT^{- 47} \le \frac{2k}{\sqrt{3\pi}}T^{-2} + 4kT^{- 47}.
\end{align*}
Finally, using the above bound in \eqref{e4}, for any $n \in \mathbb{N}, T \ge T_k$,
\begin{multline*}
\mathbb{P}\left(\sigma_1 \le T, \ \sigma_{2j+2} - \sigma_{2j+1} > 2A(k+1) \log T \text{ for some } 0 \le j \le \N_T - 1\right)\\
\le \mathbb{P}\left(\N_T > n\right) + \frac{2kn}{\sqrt{3\pi}}T^{-2} + 4kn T^{- 47} \le \mathbb{P}\left(\N_T > n\right) + 5kn T^{- 2},
\end{multline*}
which proves the lemma.
\end{proof}

\section{Proofs of main results}\label{mainproofs}

In this section we will prove Theorem \ref{piconv} and Theorem \ref{piaconv}, and their corollaries (i.e Corollary \ref{starcheck} and Corollary \ref{staracheck}). 
\subsection {Proof of Theorem \ref{piconv} }
The key idea in the proof of Theorem \ref{piconv} is to sandwich the initial gap vectors $\U$ 
and $\V$ between the $\mathbb{R}_+^{\infty}$-valued random vectors $\U \wedge \V$ and $\U \vee \V$. Using monotonicity properties of synchronous couplings described in Proposition \ref{syncprop}, one can bound the gap process $\Z^{\U}(\cdot)$ from above and below in a pathwise fashion by the synchronously coupled processes $\Z^{\U \vee \V}(\cdot)$ and $\Z^{\U \wedge \V}(\cdot)$ respectively. Lemma \ref{lesslem} uses the excursion estimates in Section \ref{excsec} to analyze synchronously coupled gap processes respectively started from $\V$ and a perturbation of $\V$ with the first $d$ entries replaced by those of $\U \wedge \V$. It shows that, for fixed $k \in \mathbb{N}, \epsilon>0$, the average time spent $\epsilon$ distance apart by the $k$-th coordinates of these two processes on a suitably large $d$-dependent time interval converges to zero in probability as $d \rightarrow \infty$. 

Lemma \ref{undconv} uses Lemma \ref{lesslem} and the quantitative estimates for the influence of far away coordinates obtained in Section \ref{farsec} to show that the averaged occupancy measure for the gap process started from $\U \wedge \V$ weakly converges to $\pi$. \textcolor{black}{The convergence of the averaged occupancy measure for the gap process started from $\U \vee \V$ is a consequence of the following lemma, which is a direct corollary of \cite[Theorem 4.7]{AS}. Theorem \ref{piconv} follows from these observations.} 
\textcolor{black}{
\begin{lemma}[Corollary of Theorem 4.7 of \cite{AS}]\label{AS4.7}
Let $(\U_{\circ}, \V)$ be an $\mathbb{R}_+^{\infty} \times \mathbb{R}_+^{\infty}$-valued random variable such that $\U_{\circ} \ge \V$ and $\V \sim \pi$. Consider the synchronously coupled copies $\XX^{\U_{\circ}}(\cdot)$ and $\XX^{\V}(\cdot)$ of the infinite ordered Atlas model (in the sense of Proposition \ref{syncprop} (ii)) starting from $(0,s(\U_{\circ})^T)^T$ and $(0,s(\V)^T)^T$ respectively, and let $\Z^{\U_{\circ}}(\cdot)$ and $\Z^{\V}(\cdot)$ denote the associated gap processes as in Proposition \ref{syncprop} (iii). Then the law of $\Z^{\U_{\circ}}(t)$ converges weakly to $\pi$ as $t \rightarrow \infty$. 
\end{lemma}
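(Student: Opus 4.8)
The plan is to recognise that, once the standing integrability requirement is checked, the statement is a direct application of \cite[Theorem 4.7]{AS}. First I would observe that the gap process of $\XX^{\U_{\circ}}$ at time $0$ is by definition $\Z^{\U_{\circ}}(0) = \U_{\circ}$, and that the existence of the coupling $(\U_{\circ}, \V)$ with $\U_{\circ} \ge \V$ a.s.\ and $\V \sim \pi$ is precisely a witness (in the sense of Strassen's theorem \cite{lindvall1999strassen}) that the law of $\U_{\circ}$ stochastically dominates $\pi$.

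Next I would verify that the objects in the statement are well defined. Writing $s(\V)_i = \sum_{j=1}^i V_j$ with $\{V_j\}$ i.i.d.\ $\operatorname{Exp}(2)$, the strong law of large numbers gives $s(\V)_i/i \to 1/2$ a.s., so almost surely $s(\U_{\circ})_i \ge s(\V)_i \ge i/4$ for all large $i$; hence $\sum_{i} e^{-\alpha s(\U_{\circ})_i^2} < \infty$ a.s.\ for every $\alpha > 0$, i.e.\ $s(\U_{\circ})$ satisfies \eqref{eq:initinteg}. Together with $0 \le 0 \le s(\U_{\circ})_1 \le s(\U_{\circ})_2 \le \cdots$ this shows that the law of $\U_{\circ}$ lies in $\cls$ and that the infinite ordered Atlas model $\XX^{\U_{\circ}}$, and the synchronous coupling with $\XX^{\V}$ of Proposition \ref{syncprop} (ii), are well defined.

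Finally, by Lemma \ref{samelaw} the process $\Z^{\U_{\circ}}(\cdot)$ has the same law as the gap process $\Z(\cdot)$ of the infinite Atlas model \eqref{Atlasdef} started from an initial gap law equal to that of $\U_{\circ}$, which stochastically dominates $\pi$. Applying \cite[Theorem 4.7]{AS} to this initial configuration then yields that the law of $\Z(t)$, and therefore that of $\Z^{\U_{\circ}}(t)$, converges weakly to $\pi$ as $t \to \infty$.

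There is no substantial obstacle here; the only points that require care are the integrability verification above (needed because \cite[Theorem 4.7]{AS} is stated for a measure in $\cls$ that dominates $\pi$) and the identification, via Lemma \ref{samelaw}, of the approximative-version construction $\XX^{\U_{\circ}}$ of Proposition \ref{syncprop} with the weak-solution formulation of the infinite Atlas model underlying \cite[Theorem 4.7]{AS}. If one prefers to stay within the synchronous-coupling picture, note that the domination $\Z^{\U_{\circ}}(t) \ge \Z^{\V}(t)$ from Proposition \ref{syncprop} (iii), combined with $\Z^{\V}(t) \sim \pi$ for all $t$ by stationarity of $\pi$, shows that the law of $\Z^{\U_{\circ}}(t)$ stochastically dominates $\pi$ for every $t$; \cite[Theorem 4.7]{AS}, whose argument already proceeds via such a monotone comparison, supplies the matching control that forces the weak limit to be exactly $\pi$.
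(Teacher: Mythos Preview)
Your proposal is correct and matches the paper's treatment: the paper does not give a separate proof of this lemma but states it as a direct corollary of \cite[Theorem 4.7]{AS}, and your write-up simply unpacks why this is so (verifying that the law of $\U_{\circ}$ lies in $\cls$ via \eqref{eq:initinteg}, noting that $\U_{\circ} \ge \V$ witnesses stochastic domination of $\pi$, and invoking Lemma \ref{samelaw} to identify the approximative-version construction with the weak-solution formulation). There is nothing to add.
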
}

\begin{lemma}\label{lesslem}
Suppose that $\U$ and $\V$ are as in the statement of Theorem \ref{piconv} .
%
For $d \in \mathbb{N}$, define the vector $\U_{(d)}$ by
$$
U_{(d), i} := (U_i \wedge V_i)\mathbf{1}(i \le d) + V_i\mathbf{1}(i>d), \ i \in \mathbb{N}.
$$ 
Then, $s(\U \wedge \V), s(\U), s(\V), s(\U_{(d)})$ all satisfy \eqref{eq:initinteg}.
Define infinite ordered Atlas models $\XX^{\U}, \XX^{\V}$ and $\XX^{\U_{(d)}}$,
starting from $(0,s(\U)^T)^T$, $(0,s(\V)^T)^T$ and  $(0,s(\U_{(d)})^T)^T$ respectively,
 as in Proposition \ref{syncprop} (i) using an infinite sequence of Brownian motions $\{B_i\}_{i\in \NN_0}$ (independent of $\U, \V$). Let $\Z^{\U}, \Z^{\V}, \Z^{\U_{(d)}}$ be the associated gap processes.
%
Then, for any $k \in \mathbb{N}$, $\epsilon \in (0,1)$, there exists $A_0\ge 1$ such that for any $\vartheta>0$, $1 \le j \le k$,
 \begin{equation*}
\mathbb{P}\left(\frac{1}{A_0 d \log d}\int_0^{A_0 \vartheta d \log d}\mathbf{1}\left(|Z^{\V}_j(s) - Z^{\U_{(d)}}_j(s)| > \epsilon \right)ds \ge 1\right) \rightarrow 0 \ \text{ as } \ d \rightarrow \infty.
 \end{equation*}
\end{lemma}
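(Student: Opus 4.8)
The plan is to deduce the statement from the excursion estimates of Section~\ref{excsec}, applied --- for each fixed $j \in \{1,\dots,k\}$ --- with $j$ playing the role of ``$k$'' there and with the pair of starting configurations $(\U_1,\U_2) = (\U_{(d)}, \V)$.

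I would first dispose of integrability and ordering. Since $U_{(d),i} = U_i \wedge V_i \le V_i$ for $i \le d$ and $U_{(d),i} = V_i$ for $i > d$, we have $\U\wedge\V \le \U_{(d)} \le \V$ coordinatewise, so $s(\U\wedge\V) \le s(\U_{(d)}) \le s(\V)$. As $\V \sim \pi = \otimes_{i\ge1}\operatorname{Exp}(2)$, the strong law gives $s(\V)_i/i \to 1/2$ a.s., so $s(\V)$, and a fortiori $s(\U_{(d)})$ and $s(\U\wedge\V)$, satisfy \eqref{eq:initinteg}; and $s(\U)_i \ge \sum_{j\le i} U_j\wedge V_j$ grows faster than $\sqrt i$ by \eqref{star}, so $s(\U)$ satisfies it too and $\mu \in \cls$. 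The processes $\XX^{\U},\XX^{\V},\XX^{\U_{(d)}}$ are then well defined (Proposition~\ref{syncprop}(i)), and since $\U_{(d)} \le \V$, Proposition~\ref{syncprop}(iii) gives $\Delta Z_i(\cdot) := Z^{\V}_i(\cdot) - Z^{\U_{(d)}}_i(\cdot) \ge 0$ for all $i$ and all times, whence $|Z^{\V}_j - Z^{\U_{(d)}}_j| = \Delta Z_j$; moreover $\sum_j j\,\Delta Z_j(0) = \sum_{j=1}^d j(V_j - U_j\wedge V_j) \le \sum_{j=1}^d jV_j < \infty$ a.s., which is precisely the hypothesis of Lemmas~\ref{l1control} and \ref{l1dec}. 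Now fix $j \le k$, run the excursion construction of Section~\ref{excsec} with this $j$ and threshold $\epsilon$, and write $\{\sigma^{(j)}_l\}_{l\ge0}$ and $\N^{(j)}_T$ for the resulting stopping times and excursion count. Since $\{s : \Delta Z_j(s)\ge\epsilon\} \subseteq \bigcup_{l\ge0}[\sigma^{(j)}_{2l+1},\sigma^{(j)}_{2l+2}]$ and only the excursions with $l \le \N^{(j)}_T-1$ meet $[0,T]$,
\[
\int_0^{T}\mathbf 1\bigl(|Z^{\V}_j(s) - Z^{\U_{(d)}}_j(s)| > \epsilon\bigr)\,ds \;\le\; \sum_{l=0}^{\N^{(j)}_T - 1}\bigl(\sigma^{(j)}_{2l+2} - \sigma^{(j)}_{2l+1}\bigr),
\]
the right side read as $0$ if $\sigma^{(j)}_1 > T$.

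Next I would bound the two factors on the right. For the count: by Lemma~\ref{l1dec} each completed excursion decreases the nonnegative, non-increasing (Lemma~\ref{l1control}) quantity $\sum_i \Delta Z_i(\cdot)$ by at least $\epsilon/2^j$, and the excursions $l = 0,\dots,\N^{(j)}_T - 2$ are completed by time $T$; hence $\N^{(j)}_T \le 2^k\epsilon^{-1}\sum_{i=1}^d V_i + 1$. Since $\mathbb P(\sum_{i=1}^d V_i \le d) \to 1$ by the strong law for $\V\sim\pi$, setting $n_d := \lceil 2^{k+1}\epsilon^{-1}d\rceil$ yields $\mathbb P(\N^{(j)}_T > n_d) \to 0$, uniformly over $j\le k$ and over $T$. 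For the lengths: apply Lemma~\ref{exclength} with $D = 1$ (legitimate since $\V/1 \sim \pi$ is stochastically dominated by $\pi$), with $j$ in place of $k$, $n = n_d$, and $T = T_d := A_0\vartheta d(\log d)$ --- allowable once $T_d \ge T_k \ge T_j$, i.e.\ for all large $d$. The error term $5k\,n_d\,T_d^{-2} = O(d^{-1}(\log d)^{-2})$ vanishes, so with probability tending to $1$ either $\sigma^{(j)}_1 > T_d$ or every excursion started by $T_d$ has length at most $48\,j(j+1)^2\log T_d \le 48\,k(k+1)^2\log T_d$ (in particular is finite). Intersecting with $\{\N^{(j)}_{T_d}\le n_d\}$ and using the display above, on an event of probability $\to 1$,
\[
\int_0^{T_d}\mathbf 1\bigl(|Z^{\V}_j(s) - Z^{\U_{(d)}}_j(s)| > \epsilon\bigr)\,ds \;\le\; n_d\cdot 48\,k(k+1)^2\log T_d \;\le\; \frac{2^{k+3}\cdot 48\,k(k+1)^2}{\epsilon}\,d\log d
\]
for all large $d$, using $n_d \le 2^{k+2}\epsilon^{-1}d$ and $\log T_d = \log(A_0\vartheta d\log d) \le 2\log d$ eventually. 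Choosing $A_0 := 2^{k+3}\cdot 48\,k(k+1)^2/\epsilon + 1$ --- which depends only on $k$ and $\epsilon$, not on $\vartheta$ --- the last quantity is $< A_0 d\log d$ on that event, and the asserted convergence of probabilities follows.

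The main obstacle is the calibration in the last step: the auxiliary cutoff $n_d$ on the number of excursions must be large enough that $\mathbb P(\N^{(j)}_{T_d} > n_d)$ is negligible yet only of order $d$, so that the polynomial error $5k\,n_d\,T_d^{-2}$ in Lemma~\ref{exclength} still vanishes; one then has to check that (number of excursions)$\,\times\,$(maximal excursion length) is of the exact target order $d\log d$, with the constant $A_0$ chosen before $\vartheta$, the $\vartheta$-dependence being harmlessly absorbed into $\log T_d \le 2\log d$.
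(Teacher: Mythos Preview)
Your proof is correct and follows essentially the same route as the paper's: bound the number of excursions via Lemma~\ref{l1dec} and the $L^1$ budget $\sum_{i\le d}\Delta Z_i(0)\le\sum_{i\le d}V_i=O(d)$, bound each excursion length via Lemma~\ref{exclength} with $D=1$, and multiply, choosing $A_0$ of order $2^k k(k+1)^2\epsilon^{-1}$ so that the product is below $A_0 d\log d$. One small slip in the integrability paragraph: your ``a fortiori'' points the wrong way --- smaller coordinates make $\sum_i e^{-\alpha x_i^2}$ larger, not smaller --- so \eqref{eq:initinteg} for $s(\U\wedge\V)$ and $s(\U_{(d)})$ does not follow from \eqref{eq:initinteg} for $s(\V)$; the correct argument (which you essentially have in the next clause) is that \eqref{star} gives $s(\U\wedge\V)_i\ge\sqrt{i}\log i$ eventually, whence \eqref{eq:initinteg} holds for $s(\U\wedge\V)$ and then, by monotonicity in the right direction, for $s(\U_{(d)})\ge s(\U\wedge\V)$, $s(\U)\ge s(\U\wedge\V)$, $s(\V)\ge s(\U\wedge\V)$.
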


\begin{proof}
First note that, by  \eqref{star}, almost surely, there exists $i' \in \mathbb{N}$ such that $s(\U \wedge \V)_i \ge \sqrt{i}\log i$ for all $i \ge i'$. In particular, $s(\U \wedge \V)$, and hence $s(\U)$, $s(\V)$, satisfies \eqref{eq:initinteg}. As $s(\U_{(d)}) \ge s(\U \wedge \V)$ and $s(\U \wedge \V)$ satisfies \eqref{eq:initinteg}, we have that $s(\U_{(d)})$ also satisfies \eqref{eq:initinteg}.

Fix any $k \in \mathbb{N}, \epsilon \in (0,1)$, $\vartheta>0$. Write $\Delta \Z'(\cdot) := \Z^{\V}(\cdot) - \Z^{\U_{(d)}}(\cdot)$.

Recall the stopping times $\{\sigma_j : j \ge 0\}$ and the random variables $\{\N_T : T \ge 0\}$ from Section \ref{excsec} with our choice of $\epsilon$, $\U_1 = \U_{(d)}$ and $\U_2 = \V$. For $A \ge 4$, $d \in \mathbb{N}$, recall from Lemma \ref{farpi}  that
$t^A_d:= A d \log d$, and write $\N_{A,\vartheta,d} := \N_{\vartheta t^A_d}$. 

As $\sum_{j=1}^{\infty}j\Delta Z'_j(0) \le \sum_{j=1}^{d} j V_j < \infty$, by Lemma \ref{l1dec}, for any $j \ge 0$, when $\sigma_{2j+2}<\infty$,
\begin{equation*}
\sum_{i=1}^{\infty} \Delta Z'_i(\sigma_{2j+2}) - \sum_{i=1}^{\infty} \Delta Z'_i(\sigma_{2j+1}) \le -\epsilon/2^k.
\end{equation*}
Recalling the monotonicity property in Proposition \ref{syncprop}(iii), for any $A > 0$,
\begin{align}\label{main1}
\mathbb{P}\left(\N_{A,\vartheta,d} > 2^k \epsilon^{-1} d + 1 \right) \le \mathbb{P}\left(\sum_{i=1}^d\Delta Z_i'(0) > d \right) \le \mathbb{P}\left(\sum_{i=1}^dV_i > d\right) \le e^{-d/8}.
\end{align}
For $d \in \mathbb{N}$, $A \ge 4$, define the event
$$
\mathcal{E}_{d,A} := \{\sigma_1 \le \vartheta t^A_d, \ \sigma_{2j+2} - \sigma_{2j+1} \le 48 k(k+1)^2 \log  (\vartheta t^A_d) \text{ for all } 0 \le j \le \N_{A,\vartheta,d} - 1\}.
$$
By Lemma \ref{exclength} (with $D=1$, $T= \vartheta t^A_d$ and $n=2^k \epsilon^{-1} d + 1$) and \eqref{main1}, there exists $d_0 \in \mathbb{N}$ such that for any $d \ge d_0$, $A \ge 4$,
\begin{equation}\label{main2}
\mathbb{P}\left(\mathcal{E}_{d,A}^c \cap \{\sigma_1 \le \vartheta t^A_d\}\right) \le e^{-d/8} + \frac{10k2^k\epsilon^{-1}}{A^2 \vartheta^2 d(\log d)^2}.
\end{equation}
For $d \ge A \vee \vartheta \ge 4$, on the event $\mathcal{E}_{d,A}$,
\begin{align}\label{main3}
\frac{1}{t^A_d}\int_0^{\vartheta t^A_d} \mathbf{1}\left(|\Delta Z'_k(s)| \ge \epsilon\right) ds &\le \frac{1}{t^A_d}\sum_{j=0}^{\N_{A,\vartheta,d}-1}\int_{\sigma_{2j+1}}^{\sigma_{2j+2}}\mathbf{1}\left(|\Delta Z'_k(s)| \ge \epsilon\right)ds\\
&\le 48 k(k+1)^2 \left(\frac{\log (\vartheta t^A_d)}{t^A_d}\right) \N_{A,\vartheta,d} \le \frac{192 k(k+1)^2}{A d}\N_{A,\vartheta,d}.\notag
\end{align}
Fix $A = A_0(k) := 385 \epsilon^{-1} 2^{k} k(k+1)^2$ (suppressing dependence on $\epsilon$ for notational convenience). Note that this does not depend on $\vartheta$. Writing $t_{d,k} := t^{A_0(k)}_d$ and $\mathcal{E}_{d,k}:= \mathcal{E}_{d,A_0(k)}$, and using \eqref{main1} and \eqref{main3}, we obtain for $d \ge A_0(k) \vee \vartheta$,
\begin{align}\label{main4}
\mathbb{P}\left(\frac{1}{t_{d,k}}\int_0^{\vartheta t_{d,k}} \mathbf{1}\left(|\Delta Z'_k(s)| \ge \epsilon\right)ds \ge 1, \ \mathcal{E}_{d,k}\right)
&\le \mathbb{P}\left(\frac{192 k(k+1)^2}{A_0(k) d}\N_{A_0(k),\vartheta,d} \ge 1\right)\\
&\le \mathbb{P}\left(\N_{A_0(k),\vartheta,d} > 2^k \epsilon^{-1} d + 1 \right) \le e^{-d/8}.\notag
\end{align}
Thus, from \eqref{main2} and \eqref{main4}, for any $\vartheta>0$, and 
$d$ sufficiently large,
\begin{equation}\label{main5}
\mathbb{P}\left(\frac{1}{t_{d,k}}\int_0^{\vartheta t_{d,k}} \mathbf{1}\left(|\Delta Z'_k(s)| \ge \epsilon\right)ds \ge 1\right) \le 2e^{-d/8} + \frac{10k2^k\epsilon^{-1}}{A_0(k)^2 \vartheta^2 d(\log d)^2}  \rightarrow 0 \ \text{ as } \ d \rightarrow \infty.
\end{equation}
Note that the above holds for any $1 \le j \le k$, upon replacing $k$ with $j$. Thus, for any $1 \le j \le k$, writing $\vartheta_k := \vartheta t_{d,k}/t_{d,1} = \vartheta A_0(k)/A_0(1) = \vartheta 2^{k-3}k(k+1)^2$,
$$
\mathbb{P}\left(\frac{1}{t_{d,k}}\int_0^{\vartheta t_{d,k}} \mathbf{1}\left(|\Delta Z'_j(s)| \ge \epsilon\right)ds \ge 1\right) \le \mathbb{P}\left(\frac{1}{t_{d,j}}\int_0^{\vartheta_k t_{d,j}} \mathbf{1}\left(|\Delta Z'_j(s)| \ge \epsilon\right)ds \ge 1\right),
$$
which converges to zero as $d \rightarrow \infty$. 
This proves the lemma with $A_0 := A_0(k)$.
\end{proof}

\begin{lemma}\label{undconv}
Suppose that $\U$ and $\V$ are as in Theorem \ref{piconv}. Define the infinite ordered Atlas model $\XX^{\U\wedge \V}$
with $\XX^{\U\wedge \V}(0) = (0,s(\U\wedge \V)^T)^T$ and the same sequence of Brownian motions used to define
$\XX^{\U}, \XX^{\V}$ and $\XX^{\U_{(d)}}$ in Lemma \ref{lesslem}. Let $\Z^{\U\wedge \V}$ be the corresponding gap process.
%
Define the measure
 \begin{equation}\label{und}
\underline{\mu}_t(F) := \frac{1}{t}\int_0^t \mathbb{P}\left(\Z^{\U \wedge \V}(s) \in F\right)ds, \ F \in \mathcal{B}\left(\mathbb{R}_+^{\infty}\right), t >0.
\end{equation} 
 Then $\underline{\mu}_t$ converges weakly to $\pi$ as $t \rightarrow \infty$.
\end{lemma}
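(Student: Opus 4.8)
\emph{Plan.} The strategy is to show $\underline\mu_t$ converges by identifying all its finite–dimensional marginals with those of $\pi$. First, $s(\U\wedge\V)$ and $s(\V)$ satisfy \eqref{eq:initinteg} (this is shown in the proof of Lemma~\ref{lesslem}), and $\Z^{\U\wedge\V}(0)\le\Z^{\U_{(d)}}(0)\le\Z^{\V}(0)$ coordinate-wise, so Proposition~\ref{syncprop}(iii) gives $\Z^{\U\wedge\V}(s)\le\Z^{\U_{(d)}}(s)\le\Z^{\V}(s)$ for all $s\ge0$; since $\Z^{\V}(s)\sim\pi$ for every $s$ by stationarity, each coordinate of $\Z^{\U\wedge\V}(s)$ is stochastically dominated by $\operatorname{Exp}(2)$. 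Hence $\{\underline\mu_t\}$ and all its subsequential limits are stochastically dominated by $\pi$, and in particular the family is tight on $\RR_+^{\infty}$. It therefore suffices to prove that for each $k\in\NN$ and each bounded Lipschitz $f$ on $\RR_+^k$, $\frac1t\int_0^t\mathbb{E}[f(\Z^{\U\wedge\V}(s)\vert_k)]\,ds\to\int f\,d\pi^{(k)}$. By stationarity $\int f\,d\pi^{(k)}=\frac1t\int_0^t\mathbb{E}[f(\Z^{\V}(s)\vert_k)]\,ds$, and since $\Z^{\U\wedge\V}\le\Z^{\V}$ coordinate-wise, splitting $|f(x)-f(y)|$ over $\{\|x-y\|_1\le\delta\}$ and its complement reduces the claim to showing that for every $1\le j\le k$ and $\delta>0$,
\begin{equation}\label{eq:keyred}
\lim_{t\to\infty}\frac1t\int_0^t\mathbb{P}\big(Z^{\V}_j(s)-Z^{\U\wedge\V}_j(s)>\delta\big)\,ds=0.
\end{equation}

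\emph{Two ingredients.} Fix $k$, $j\le k$, $\delta\in(0,2)$; set $t_d:=Cd\log d$ for a large constant $C$ (to be chosen, with $C\ge4$ and $C>A_0(k,\delta/2)$), and write $\Delta Z_i:=Z^{\U_{(d)}}_i-Z^{\U\wedge\V}_i\ge0$, $\Delta L_i:=L^{\U_{(d)}}_i-L^{\U\wedge\V}_i$; by Proposition~\ref{syncprop}(iii), $\Delta L_i$ is non-positive and non-increasing. \textbf{(A) Far-away coordinates are harmless on $[0,t_d]$.} Since $\U_{(d)}$ and $\U\wedge\V$ agree on the first $d$ coordinates, $\Delta Z_M(0)=0$ for $M\le d$, so the difference of \eqref{order_SDEX} gives, for $1\le M\le d$, $\Delta Z_M(t)=\Delta L_M(t)-\tfrac12\Delta L_{M-1}(t)-\tfrac12\Delta L_{M+1}(t)\ge0$; equivalently $M\mapsto|\Delta L_M(t)|$ is convex on $\{0,\dots,d+1\}$ with $|\Delta L_0(t)|=0$, whence $|\Delta L_{k+1}(t)|\le\frac{k+1}{d}|\Delta L_d(t)|$ for $d>k$. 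Telescoping \eqref{order_SDEX} over the first $k$ coordinates,
\[
0\le\sum_{i=1}^k\Delta Z_i(t)=\tfrac12\Delta L_1(t)+\tfrac12\Delta L_k(t)-\tfrac12\Delta L_{k+1}(t)\le\tfrac12|\Delta L_{k+1}(t)|\le\frac{k+1}{2d}\,|\Delta L_d(t)|\le\frac{k+1}{2d}\,L^{\U\wedge\V}_d(t),
\]
using $|\Delta L_d(t)|=L^{\U\wedge\V}_d(t)-L^{\U_{(d)}}_d(t)\le L^{\U\wedge\V}_d(t)$. It then remains to prove that $\frac1d L^{\U\wedge\V}_d(t_d)\to0$ in probability as $d\to\infty$: this is where Lemma~\ref{farpi} enters (note $t_d$ is the time $t^C_d$ of that lemma), controlling, via the separation of the bottom gaps of the $d$-independent system $\XX^{\U\wedge\V}$ from its coordinates of index $\ge d$, the growth of the collision local time at the $d$-th interface over the time scale $d\log d$; combined with standard sublinear bounds on local times this gives an event $G_d$, $\mathbb{P}(G_d)\to1$, on which $\sup_{s\in[0,t_d]}\sum_{i=1}^k\Delta Z_i(s)\to0$. \textbf{(B) Mixing of the perturbed system.} Apply Lemma~\ref{lesslem} with threshold $\delta/2$ and $\vartheta:=C/A_0$, $A_0:=A_0(k,\delta/2)$, so that $A_0\vartheta d\log d=t_d$:
\[
\mathbb{P}\Big(\tfrac1{t_d}\int_0^{t_d}\mathbf{1}\big(|Z^{\V}_j(s)-Z^{\U_{(d)}}_j(s)|>\delta/2\big)\,ds\ge\tfrac{A_0}{C}\Big)\longrightarrow0\quad(d\to\infty).
\]

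\emph{Combining.} On the intersection of $G_d$ with the event in (B) (probability $\to1$), and for $d$ large enough that $\sup_{s\le t_d}\sum_{i\le k}\Delta Z_i(s)<\delta/2$, the bound $Z^{\V}_j(s)-Z^{\U\wedge\V}_j(s)\le\big(Z^{\V}_j(s)-Z^{\U_{(d)}}_j(s)\big)+\Delta Z_j(s)<\big(Z^{\V}_j(s)-Z^{\U_{(d)}}_j(s)\big)+\delta/2$ yields $\{Z^{\V}_j(s)-Z^{\U\wedge\V}_j(s)>\delta\}\subseteq\{Z^{\V}_j(s)-Z^{\U_{(d)}}_j(s)>\delta/2\}$ on $[0,t_d]$, hence $\frac1{t_d}\int_0^{t_d}\mathbf{1}(Z^{\V}_j(s)-Z^{\U\wedge\V}_j(s)>\delta)\,ds<A_0/C$ there; taking expectations (the integrand lies in $[0,1]$) gives $\limsup_{d}\frac1{t_d}\int_0^{t_d}\mathbb{P}(Z^{\V}_j(s)-Z^{\U\wedge\V}_j(s)>\delta)\,ds\le A_0(k,\delta/2)/C$. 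Feeding this into the reduction of the first paragraph: given $\rho>0$, first pick $\delta$ so small that the Lipschitz term is $<\rho/2$ (this fixes $A_0(k,\delta/2)$), then pick $C$ so large that $2\|f\|_\infty k\,A_0(k,\delta/2)/C<\rho/2$; this gives $\limsup_d\big|\frac1{t_d}\int_0^{t_d}\mathbb{E}[f(\Z^{\U\wedge\V}(s)\vert_k)]\,ds-\int f\,d\pi^{(k)}\big|\le\rho$ along $t_d=Cd\log d$, and since $t_{d+1}/t_d\to1$ and the integrand is bounded by $\|f\|_\infty$, the corresponding bound (up to an absolute multiple of $\rho$) holds for all large $t$. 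As $\rho>0$ was arbitrary, \eqref{eq:keyred} follows and the proof is complete.

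\emph{Main obstacle.} The delicate step is the local-time estimate inside (A): that $\frac1d L^{\U\wedge\V}_d(Cd\log d)\to0$ with high probability, i.e.\ sublinear growth of the $d$-th collision local time of the infinite ordered Atlas model on the time scale $d\log d$. Lemma~\ref{farpi} (localization of the bottom gaps away from the faraway coordinates) is tailored to supply exactly this, but making it work requires the monotone sandwich $\XX^{\U\wedge\V}\le\XX^{\U_{(d)}}\le\XX^{\V}$ so that Lemma~\ref{farpi} is only invoked for $\XX^{\U\wedge\V}$ and $\XX^{\V}$, whose initial laws do not depend on $d$; the coupling of this localization with the $L^1$-type identities of Lemma~\ref{l1control} and the excursion estimates of Lemma~\ref{lesslem} is the technical heart of the argument.
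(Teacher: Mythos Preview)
Your reduction to \eqref{eq:keyred}, your use of Lemma~\ref{lesslem} in step (B), and the convexity argument at the start of (A) are all fine. The genuine gap is the local-time estimate you invoke at the end of (A): the claim that $\frac{1}{d}L^{\U\wedge\V}_d(Cd\log d)\to 0$ in probability is false. Under the synchronous coupling, $\U\wedge\V\le\V$ gives $L^{\U\wedge\V}_d(t)\ge L^{\V}_d(t)$ by Proposition~\ref{syncprop}(iii). For the stationary system $\Z^{\V}$, taking expectations in the SDE for the gaps and using stationarity yields $\mathbb{E}[L^{\V}_i(t)]=2t$ for every $i$ (the relations $\ell_1-\tfrac12\ell_2=1$ and $\ell_i=\tfrac12(\ell_{i-1}+\ell_{i+1})$ force $\ell_i\equiv 2$). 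Hence $\mathbb{E}\big[\tfrac{1}{d}L^{\U\wedge\V}_d(Cd\log d)\big]\ge 2C\log d\to\infty$, and there is no ``standard sublinear bound'' that rescues this. Lemma~\ref{farpi} does not help either: it separates $Y_{(k)}$ from particles of index $\ge d$, but says nothing about collisions between particles $d-1$ and $d$, which is what $L_d$ measures. Nor can you fall back on the $L^1$ control of Lemma~\ref{l1control}, since $\sum_{j>d} j\,\Delta Z_j(0)=\sum_{j>d} j\,(V_j-U_j\wedge V_j)$ is typically infinite.

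The paper circumvents this entirely by working in distribution rather than pathwise: on the event from Lemma~\ref{farpi} (particles $\ge d(t)$ stay above $Y_{(k)}$ on $[0,t]$), the first $k$ gaps of the infinite system agree with those of the finite $(d(t)+1)$-particle Atlas model started from the first $d(t)+1$ coordinates. Since $\U\wedge\V$ and $\U_{(d(t))}$ agree on their first $d(t)$ coordinates, the corresponding finite gap processes are \emph{equal in law}, giving $\underline{\mu}_t^{(k)}(F)-\tilde\mu_t^{(k)}(F)\to 0$ directly. To repair your argument you should replace your local-time estimate in (A) with this finite-dimensional coupling comparison; the rest of your outline then goes through essentially as written.
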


\begin{proof}
Fix any $\epsilon \in (0,1), \delta \in (0,1), k \in \mathbb{N}$. Recall $A_0$ from Lemma \ref{lesslem} for this choice of $\epsilon,k$. For $t > \delta^{-1} A_0 e$, write $d(t) := \lfloor \delta A_0^{-1} t/\log (\delta A_0^{-1} t)\rfloor$. Choose $t_0 > \delta^{-1} A_0 e$ large enough such that
$
d(t) \ge \delta A_0^{-1} t/[2\log (\delta A_0^{-1} t)] > k
$
and
$
\log (\delta A_0^{-1} t) \ge 2\log (2\log (\delta A_0^{-1} t))
$
for all $t \ge t_0$. Also, set $\vartheta := 4\delta^{-1}$. Note that for all $t \ge t_0$,
$$
A_0 d(t) \log d(t) \le \delta t, \ \ A_0 \vartheta d(t) \log d(t) \ge t.
$$
Recall the gap processes $\Z^{\V}, \Z^{\U_{(d)}}$ from Lemma \ref{lesslem}.
Then, for all $ t \ge t_0$, $1 \le j \le k$,
 \begin{multline*}
 \mathbb{P}\left(\frac{1}{t}\int_0^{t}\mathbf{1}\left(|Z^{\V}_j(s) - Z^{\U_{(d(t))}}_j(s)| > \epsilon \right)ds \ge \delta\right)\\
\le \mathbb{P}\left(\frac{1}{A_0 d(t) \log d(t)}\int_0^{A_0 \vartheta d(t) \log d(t)}\mathbf{1}\left(|Z^{\V}_j(s) - Z^{\U_{(d(t))}}_j(s)| > \epsilon \right)ds \ge 1\right).
 \end{multline*}
 Hence, applying Lemma \ref{lesslem} with $d(t)$ in place of $d$ and the above choice of $\vartheta$, we obtain for any $1 \le j \le k$,
 \begin{equation}\label{down1}
 \mathbb{P}\left(\frac{1}{t}\int_0^{t}\mathbf{1}\left(|Z^{\V}_j(s) - Z^{\U_{(d(t))}}_j(s)| > \epsilon \right)ds \ge \delta\right) \rightarrow 0 \ \text{ as } \ t \rightarrow \infty.
 \end{equation}
 Define the measures $\{\tilde\mu^{(k)}_t, t >0\}$ on $\mathbb{R}_+^k$ by
\begin{equation*}
\tilde\mu^{(k)}_t(F) := \frac{1}{t}\int_0^t \mathbb{P}\left(\Z^{\U_{(d(t))}}(s) \vert_k \in F\right)ds, \ F \in \mathcal{B}\left(\mathbb{R}_+^k\right), t >0,
\end{equation*} 
Take any closed set $F \in \mathcal{B}\left(\mathbb{R}_+^k\right)$ and, for $\eta>0$, let $F^{\eta} := \{y \in \mathbb{R}_+^k : \|y-x\|_1 \le \eta \text{ for some } x \in F\}$ be the closed $\eta$-neighborhood of $F$ with respect to $L^1$ distance. Observe that
\begin{align*}
\tilde\mu_t^{(k)}(F) 
 \le \frac{1}{t}\int_0^t \mathbb{P}\left(\Z^{\V}(s)\vert_k \in F^{k\epsilon}\right)ds  + \sum_{j=1}^k\frac{1}{t}\int_0^t\mathbb{P}\left(|Z^{\V}_j(s) - Z^{\U_{(d(t))}}_j(s)| > \epsilon \right)ds.
\end{align*}
Using the inequality
$$
\frac{1}{t}\int_0^t\mathbb{P}\left(|Z^{\V}_j(s) - Z^{\U_{(d(t))}}_j(s)| > \epsilon \right)ds  \le \delta + 
\mathbb{P}\left(\frac{1}{t}\int_0^{t}\mathbf{1}\left(|Z^{\V}_j(s) - Z^{\U_{(d(t))}}_j(s)| > \epsilon \right)ds \ge \delta\right)
$$
for $1 \le j \le k$, together with \eqref{down1}, and the stationarity of $\Z^{\V}(\cdot)$, we now obtain for any closed set $F \in \mathcal{B}\left(\mathbb{R}_+^k\right)$,
\begin{equation}\label{down2}
\limsup_{t \rightarrow \infty} \tilde\mu_t^{(k)}(F)  \le \pi^{(k)}(F^{k\epsilon}) + k\delta,
\end{equation}
where $ \pi^{(k)} = \bigotimes_{i=1}^{k} \operatorname{Exp}(2)$.

Our next step is to show the above bound \eqref{down2} with $\tilde\mu_t^{(k)}$ replaced by the measure $\underline{\mu}^{(k)}_t$ (the $k$-marginal of $\underline{\mu}_t$) by showing that, for any closed set $F \in \mathcal{B}\left(\mathbb{R}_+^k\right)$,
\begin{equation}\label{down3}
\underline{\mu}^{(k)}_t(F) - \tilde\mu_t^{(k)}(F)  \rightarrow 0 \ \text{ as } \ t \rightarrow \infty.
\end{equation}
To see this, take any $t \ge t_0$. Let $\gamma(t)$ be the probability law of $(0,s(\U_{(d(t))})^T)^T$ and denote by $\Y^{\gamma(t)}$ the unique weak solution of \eqref{rankbd} with $\Y^{\gamma(t)}(0)$ distributed as $\gamma(t)$, given on some probability space with driving Brownian motions $\{W_i\}_{i \in \NN_0}$. Denote the corresponding process of gaps (as defined in \eqref{gapdef}) as $\hat \Z^{\gamma(t)}$. 

Recall $d(t) > k$. For any $\zeta \in [0,t]$,
\begin{align}\label{eq:1125}
&\mathbb{P}\left(\Z^{\U_{(d(t))}}(\zeta) \vert_k \in F\right) = \mathbb{P}\left(\hat{\Z}^{\gamma(t)}(\zeta) \vert_k \in F\right)\\
 &= \mathbb{P}\left(\hat{\Z}^{\gamma(t)}(\zeta) \vert_k \in F, \ \inf_{s \in [0, t]} \inf_{i \ge d(t)}Y^{\gamma(t)}_{i}(s) > \sup_{s \in [0, t]} Y^{\gamma(t)}_{(k)}(s)\right)\notag\nonumber\\
 &\qquad + \mathbb{P}\left(\hat{\Z}^{\gamma(t)}(\zeta) \vert_k \in F, \inf_{s \in [0, t]} \inf_{i \ge d(t)}Y^{\gamma(t)}_{i}(s) \le \sup_{s \in [0, t]} Y^{\gamma(t)}_{(k)}(s)\right).\nonumber
 \end{align}
Recall from the Introduction that a finite-dimensional Atlas model has a unique strong solution. We denote by $\hat \Y^{\gamma(t), d(t)}$ such a process with $d(t)+1$ particles, driven by Brownian motions $\{W_i\}_{i=0}^{d(t)}$ and initial conditions
$\hat Y^{\gamma(t), d(t)}_i(0) = Y^{\gamma(t)}_i(0)$, $i=0, 1, \ldots, d(t)$. Denote by $\hat \XX^{\gamma(t), d(t)}$ the corresponding process of ordered particles
and denote the associated gap process by $\hat \Z^{\gamma(t), d(t)}$. Then, on the set, $\{\inf_{s \in [0, t]} \inf_{i \ge d(t)}Y^{\gamma(t)}_{i}(s) > \sup_{s \in [0, t]} Y^{\gamma(t)}_{(k)}(s)\}$,
 $\hat \Z^{\gamma(t)}(s)\vert_k = \hat \Z^{\gamma(t), d(t)}(s)\vert_k$ for all $s \in [0,t]$ and so, using \eqref{eq:1125}, for $\zeta \in [0,t]$,
 \begin{align*}
 	\mathbb{P}\left(\Z^{\U_{(d(t))}}(\zeta) \vert_k \in F\right) &=
	\mathbb{P}\left(\hat{\Z}^{\gamma(t), d(t)}(\zeta) \vert_k \in F, \ \inf_{s \in [0, t]} \inf_{i \ge d(t)}Y^{\gamma(t)}_{i}(s) > \sup_{s \in [0, t]} Y^{\gamma(t)}_{(k)}(s)\right)\\
	 &\qquad + \mathbb{P}\left(\hat{\Z}^{\gamma(t)}(\zeta) \vert_k \in F, \inf_{s \in [0, t]} \inf_{i \ge d(t)}Y^{\gamma(t)}_{i}(s) \le \sup_{s \in [0, t]} Y^{\gamma(t)}_{(k)}(s)\right)\\
	 &= \mathbb{P}\left(\hat{\Z}^{\gamma(t), d(t)}(\zeta) \vert_k \in F\right)\\
	 &\qquad- \mathbb{P}\left(\hat{\Z}^{\gamma(t), d(t)}(\zeta) \vert_k \in F, \ \inf_{s \in [0, t]} \inf_{i \ge d(t)}Y^{\gamma(t)}_{i}(s) \le \sup_{s \in [0, t]} Y^{\gamma(t)}_{(k)}(s)\right)\\
	 &\qquad + \mathbb{P}\left(\hat{\Z}^{\gamma(t)}(\zeta) \vert_k \in F, \inf_{s \in [0, t]} \inf_{i \ge d(t)}Y^{\gamma(t)}_{i}(s) \le \sup_{s \in [0, t]} Y^{\gamma(t)}_{(k)}(s)\right)
\end{align*}
which shows that
\begin{equation}\label{eq:42}
\left|\mathbb{P}\left(\Z^{\U_{(d(t))}}(\zeta) \vert_k \in F\right) - 	\mathbb{P}\left(\hat{\Z}^{\gamma(t), d(t)}(\zeta) \vert_k \in F\right)\right| \le \mathbb{P}\left( \inf_{s \in [0, t]} \inf_{i \ge d(t)}Y^{\gamma(t)}_{i}(s) \le \sup_{s \in [0, t]} Y^{\gamma(t)}_{(k)}(s)\right).
\end{equation}
Now let $\gamma^*$ be the probability law of $(0,s(\U\wedge \V)^T)^T$ and denote by $\Y^{\gamma^*}$ the unique weak solution of \eqref{rankbd} with $\Y^{\gamma^*}(0)$ distributed as $\gamma^*$. Denote the corresponding process of gaps  as $\hat \Z^{\gamma^*}$. 
The process of ordered particles $\hat \XX^{\gamma^*, d(t)}$ with $d(t)+1$ particles and initial values $\Y^{\gamma^*}(0)\vert_{d(t)}$, and the associated
gap process $\hat \Z^{\gamma^*, d(t)}$, are defined in a similar manner as $\hat{\XX}^{\gamma(t), d(t)}$ and $\hat{\Z}^{\gamma(t), d(t)}$. By a similar argument as above
\begin{equation}\label{eq:43}
\left|\mathbb{P}\left(\Z^{\U \wedge \V}(\zeta) \vert_k \in F\right) - 	\mathbb{P}\left(\hat{\Z}^{\gamma^*, d(t)}(\zeta) \vert_k \in F\right)\right| \le \mathbb{P}\left( \inf_{s \in [0, t]} \inf_{i \ge d(t)}Y^{\gamma^*}_{i}(s) \le \sup_{s \in [0, t]} Y^{\gamma^*}_{(k)}(s)\right).	
\end{equation}
Since $\U\wedge \V \vert_{d(t)} = \U_{(d(t))}\vert_{d(t)}$ we have that $\hat{\Z}^{\gamma^*, d(t)}$ and
$\hat{\Z}^{\gamma(t), d(t)}$ have the same distribution and so from \eqref{eq:42} and \eqref{eq:43} we have that
\begin{align}\label{eq:56}
	&\left|\mathbb{P}\left(\Z^{\U_{(d(t))}}(\zeta) \vert_k \in F\right)- \mathbb{P}\left(\Z^{\U \wedge \V}(\zeta) \vert_k \in F\right)\right|\\
	&\le \mathbb{P}\left( \inf_{s \in [0, t]} \inf_{i \ge d(t)}Y^{\gamma(t)}_{i}(s) \le \sup_{s \in [0, t]} Y^{\gamma(t)}_{(k)}(s)\right) + \mathbb{P}\left( \inf_{s \in [0, t]} \inf_{i \ge d(t)}Y^{\gamma^*}_{i}(s) \le \sup_{s \in [0, t]} Y^{\gamma^*}_{(k)}(s)\right).\nonumber
\end{align}

Now, recalling $t^{A_0\vartheta}_{d(t)}=A_0 \vartheta d(t) \log d(t) \ge t$ and that $\Gamma^{A_0\vartheta}_{d(t)} = 6A_0\vartheta \sqrt{d(t)}(\log d(t))$ from Lemma \ref{farpi}, we see that
\begin{align*}
&\mathbb{P}\left(\inf_{s \in [0, t]} \inf_{i \ge d(t)}Y^{\gamma(t)}_{i}(s) \le \sup_{s \in [0, t]} Y^{\gamma(t)}_{(k)}(s)\right)\\
&\le \mathbb{P}\left(\inf_{s \in [0, t^{A_0\vartheta}_{d(t)}]} \inf_{i \ge d(t)}Y^{\gamma(t)}_{i}(s) \le \sup_{s \in [0, t^{A_0\vartheta}_{d(t)}]} Y^{\gamma(t)}_{(k)}(s)\right)\\
 &\le \mathbb{P}\left(\inf_{s \in [0, t^{A_0\vartheta}_{d(t)}]} \inf_{i \ge d(t)}Y^{\gamma(t)}_{i}(s) \le \Gamma^{A_0\vartheta}_{d(t)}\right) + \mathbb{P}\left(\sup_{s \in [0, t^{A_0\vartheta}_{d(t)}]} Y^{\gamma(t)}_{(k)}(s) \ge \Gamma^{A_0\vartheta}_{d(t)}\right)\\
&\le \mathbb{P}\left(\inf_{s \in [0, t^{A_0\vartheta}_{d(t)}]} \inf_{i \ge d(t)}Y^{\gamma^*}_{i}(s) \le \Gamma^{A_0\vartheta}_{d(t)}\right) + \mathbb{P}\left(\sup_{s \in [0, t^{A_0\vartheta}_{d(t)}]} Y^{\pi}_{(k)}(s) \ge \Gamma^{A_0\vartheta}_{d(t)}\right)
\rightarrow 0 \ \text{ as } \ t \rightarrow \infty,
\end{align*}
where $\Y^{\pi}(\cdot)$ is the infinite Atlas model with $\Y^{\pi}(0)$ distributed as $(0, s(\V)^T)^T$.
The last inequality above uses the observation \textcolor{black}{$\U \wedge \V \le \U_{(d(t))} \le \V$, $\inf_{i \ge d(t)}Y^{\gamma^*}_{i}(s) = Y^{\gamma^*}_{(d(t))}(s)$} for all $s \ge 0$, and \cite[Corollary 3.10 (i)]{AS}. The limit follows from Lemma \ref{farpi} with $A=A_0\vartheta$, $d=d(t)$
upon using $\U \wedge \V \le \V$ and noting that \textcolor{black}{$\Y^{\gamma^*}(0)$ satisfies condition \eqref{farpicond} by virtue of assumption \eqref{star} and $\Y^{\pi}(0)$ satisfies \eqref{farpicond} by the law of large numbers.}

Similarly,
\begin{equation*}
\mathbb{P}\left(\inf_{s \in [0, t]} \inf_{i \ge d(t)}Y^{\gamma^*}_{i}(s) \le \sup_{s \in [0, t]} Y^{\gamma^*}_{(k)}(s)\right) \rightarrow 0 \ \text{ as } \ t \rightarrow \infty.
\end{equation*}
Using the above observations along with \eqref {eq:56}, we obtain
\begin{equation*}
\sup_{\zeta \in [0,t]}\left|\mathbb{P}\left(\Z^{\U \wedge \V}(\zeta) \vert_k \in F\right) - \mathbb{P}\left(\Z^{\U_{(d(t))}}(\zeta) \vert_k \in F\right)\right| \rightarrow 0 \ \text{ as } \ t \rightarrow \infty.
\end{equation*}
\eqref{down3} follows from this. From \eqref{down2} and \eqref{down3}, we obtain for any closed set $F \in \mathcal{B}\left(\mathbb{R}_+^k\right)$,
$$
\limsup_{t \rightarrow \infty} \underline{\mu}^{(k)}_t(F)  \le \pi^{(k)}(F^{k\epsilon}) + k\delta.
$$
As the left hand side above does not depend on $\epsilon,\delta$, we take a limit as $\epsilon,\delta$ go to zero in the above to obtain for any $k \in \mathbb{N}$,
$$
\limsup_{t \rightarrow \infty} \underline{\mu}^{(k)}_t(F)  \le \pi^{(k)}(F) \ \text{ for any  closed set} \  F \in \mathcal{B}\left(\mathbb{R}_+^k\right).
$$
The lemma now follows from the Portmanteau theorem \cite[Chapter 1, Theorem 2.1]{billingsley2013convergence}.
\end{proof}

Now, we are ready to prove Theorem \ref{piconv}.

\begin{proof}[Proof of Theorem \ref{piconv}]
	Let $(\U,\V)$ be as in the statement of Theorem \ref{piconv}. Let $\Z^{\U}, \Z^{\V}$ and $\Z^{\U\wedge \V}$ be as in Lemma \ref{lesslem} and Lemma \ref{undconv}.
By the monotonicity property in Proposition \ref{syncprop}(iii),
$$
\Z^{\U \wedge \V}(\cdot) \le \Z^{\U}(\cdot) \le \Z^{\U \vee \V}(\cdot).
$$
Define the measure
 \begin{equation*}
\overline{\mu}_t(F) := \frac{1}{t}\int_0^t \mathbb{P}\left(\Z^{\U \vee \V}(s) \in F\right)ds, \ F \in \mathcal{B}\left(\mathbb{R}_+^{\infty}\right), t >0,
\end{equation*} 
and recall $\underline{\mu}_t$ defined in \eqref{und}.
Also define
$$
{\mu}_t(F) := \frac{1}{t}\int_0^t \mathbb{P}\left(\Z^{\U }(s) \in F\right)ds, \ F \in \mathcal{B}\left(\mathbb{R}_+^{\infty}\right), t >0.
$$
Then, for any $k \in \mathbb{N}$, $\z \in \mathbb{R}_+^k$,
\begin{equation}\label{sand}
\overline{\mu}^{(k)}_t((-\infty,\z]) \le\mu^{(k)}_t((-\infty,\z]) 
\le \underline{\mu}^{(k)}_t((-\infty,\z]).
\end{equation}
By Lemma \ref{undconv}, $\underline{\mu}^{(k)}_t((-\infty,\z]) \rightarrow \pi^{(k)}((-\infty,\z])$ as $t \rightarrow \infty$. Moreover, as $\U \vee \V \ge \V$, \textcolor{black}{by Lemma \ref{AS4.7}}, the law of $\Z^{\U \vee \V}(t)$ converges weakly to $\pi$. In particular, $\overline{\mu}^{(k)}_t((-\infty,\z]) \rightarrow \pi^{(k)}((-\infty,\z])$ as $t \rightarrow \infty$. Hence, by \eqref{sand}, for any $k \in \mathbb{N}$, $\z \in \mathbb{R}_+^k$,
$$
\mu^{(k)}_t((-\infty,\z]) \rightarrow \pi^{(k)}((-\infty,\z]) \ \text{ as } \ t \rightarrow \infty.
$$
This proves the theorem.
\end{proof}

\begin{proof}[Proof of Corollary \ref{starcheck}]
The fact that (i) implies \eqref{star} is immediate on observing that we can find a coupling $(\U,\V)$ of $\mu$ and $\pi$ such that 
$\U\le \V$ a.s.

For the statement in  (ii), we will consider the independent coupling of $(\U,\V)$, that is, under the coupling, the marginals $\U$ and $\V$ are independent. Recall from Remark \ref{remthm1} (b) that the statement in \eqref{star2} is equivalent  to
\begin{equation}\label{star2n}
 \liminf_{d \rightarrow \infty} \frac{1}{\sqrt{d} (\log d)} \sum_{i=1}^d (U_i \wedge \frac{1}{2}) = \infty, \mbox{ a.s. }
 \end{equation}
 It suffices to show that, conditional on a realization of $\U= \mathbf{u}$ such that \eqref{star2n} holds with $\U = \mathbf{u}$, the independent random variables $\Psi_i := V_i \wedge u_i$ satisfy
\begin{equation}\label{c1}
\liminf_{m \rightarrow \infty}\frac{1}{\sqrt{m}\log m}\sum_{i=1}^m \Psi_i = \infty \ \text{ almost surely}.
\end{equation}
Note that
$$
\mu_i:=\mathbb{E}\left(\Psi_i\right) = \int_0^{u_i}2ve^{-2v}dv + \int_{u_i}^{\infty}2u_ie^{-2v}dv = \frac{1}{2}(1-e^{-2u_i}).
$$
As $x \wedge(1/2) \le 1-e^{-2x} \le 2\left(x \wedge(1/2)\right)$ for all $x \ge 0$, \eqref{star2n} with $\U = \mathbf{u}$ is equivalent to the condition
\begin{equation}\label{c2}
\lim_{m \rightarrow \infty}\frac{1}{\sqrt{m}\log m}\sum_{i=1}^m \mu_i = \infty. 
\end{equation}
Take any $C>0$. By \eqref{c2}, there exists $m_C \in \mathbb{N}$ such that $\sum_{i=1}^m \mu_i \ge 2C\sqrt{m}\log m$ for all $m \ge m_C$. Note that for any $k \in \mathbb{N}$,
$$
\mathbb{E}\left(|\Psi_i - \mu_i|^k\right) \le \textcolor{black}{2^k\mathbb{E}(V_i^k + \mu_i^k)} \le 2^k(k!2^{-k} + 2^{-k}) \le 2 \ k!.
$$
Therefore, $\Psi_i$ satisfies condition (2.16) in \cite[Chapter 2]{wainwright2019high} with $\sigma=2, b=1$. Hence, by \cite[Chapter 2, Proposition 2.3]{wainwright2019high}, $\mathbb{E}\left(e^{\lambda (\Psi_i - \mu_i)}\right) \le \exp\{2\lambda^2/(1- |\lambda|)\} \le \exp\{4\lambda^2\}$ for all $|\lambda| < 1/2$. 
Hence, using \cite[Chapter 2, Theorem 2.3]{wainwright2019high}, with $D_k = \Psi_k$, $b_k=2$ and $\nu_k = 2\sqrt{2}$, for all $m \ge m_C$ satisfying $C\sqrt{m}\log m \le 4m$,
\begin{align*}
\mathbb{P}\left(\sum_{i=1}^m \Psi_i \le C\sqrt{m}\log m\right) &
\le \mathbb{P}\left(\sum_{i=1}^m (\Psi_i - \mu_i) \le -C\sqrt{m}\log m\right)\\
&\le 2\exp\left\lbrace - \frac{(C \sqrt{m}\log m)^2}{16m}\right\rbrace = 2\exp\left\lbrace - C^2 (\log m)^2/16\right\rbrace.
\end{align*}
As the bound above is summable in $m$, we conclude from the Borel-Cantelli Lemma that for any $C>0$, almost surely, $\liminf_{m \rightarrow \infty}(\sqrt{m}\log m)^{-1}\sum_{i=1}^m \Psi_i \ge C$. This implies \eqref{c1} and thus proves  that the statement in (ii) implies \eqref{star}.

Consider now (iii). Thus $U_i = \lambda_i \Theta_i, \ i \in \mathbb{N}$, where $\{\Theta_i\}$ are iid non-negative random variables satisfying $\mathbb{P}(\Theta_1>0)>0$, and $\{\lambda_i\}$ are positive deterministic real numbers. 
If $\{\lambda_j\}$ satisfy condition (a), i.e. $\liminf_{j \rightarrow \infty} \lambda_j>0$,
there exists $\eta>0$ such that $U_i \wedge (1/2) \ge (\eta \Theta_i) \wedge (1/2)$ for all sufficiently large $i$. Note that $\{(\eta \Theta_i) \wedge (1/2)\}$ are iid with positive mean as $\mathbb{P}(\Theta_1>0)>0$. Therefore, \eqref{star2n} holds by the strong law of large numbers, implying \eqref{star}.

If $\{\lambda_j\}$ satisfy condition (b), namely $\limsup_{j \rightarrow \infty}\lambda_j < \infty$ and
\eqref{star3} holds, 
there exists $M \in [1,\infty)$ such that $\lambda_j \le M$ for all $j \in \mathbb{N}$. Note that $\mathbb{E}\left(U_i \wedge (1/2)\right) \ge \lambda_j\mathbb{E}\left(\Theta_i \wedge (1/2M)\right) \ge \lambda_j M^{-1} \mathbb{E}\left(\Theta_1 \wedge (1/2)\right)$. As $\mathbb{P}(\Theta_1>0)>0$, $\mathbb{E}\left(\Theta_1 \wedge (1/2)\right) >0$. Hence, by \eqref{star3}, for any $C>0$, there exists $m_C \in \mathbb{N}$ such that for all $m \ge m_C$,
$$
\sum_{i=1}^m\mathbb{E}\left(U_i \wedge (1/2)\right) \ge M^{-1} \mathbb{E}\left(\Theta_1 \wedge (1/2)\right)\sum_{i=1}^m \lambda_i \ge 2C \sqrt{m}\log m.
$$
As $|U_i \wedge (1/2) -\mathbb{E}\left(U_i \wedge (1/2)\right)| \le 1$ for all $i \in \mathbb{N}$,  by the Azuma-Hoeffding inequality \cite[Chapter 2, Corollary 2.1]{wainwright2019high}, for all $m \ge m_C$,
\begin{multline*}
\mathbb{P}\left(\sum_{i=1}^mU_i \wedge (1/2) \le C\sqrt{m}\log m\right)\\
\le \mathbb{P}\left(\sum_{i=1}^m (U_i \wedge (1/2) -\mathbb{E}\left(U_i \wedge (1/2)\right)) \le -C\sqrt{m}\log m\right) \le 2\exp\{-2C^2(\log m)^2\},
\end{multline*}
which is summable in $m$. Hence, we conclude from the Borel-Cantelli Lemma that $$\liminf_{m \rightarrow \infty}(\sqrt{m}\log m)^{-1}\sum_{i=1}^m U_i \wedge (1/2) = \infty$$ almost surely. Thus, \eqref{star2n} holds which shows \eqref{star}.

\end{proof}

\subsection{Proof of Theorem \ref{piaconv}}

We now prove Theorem \ref{piaconv}. As in the proof of Theorem \ref{piconv}, we will proceed by defining new starting configurations for the gap process in terms of $\U$ and $\V_a$ and exploiting the monotonicity properties of the synchronous coupling described in Proposition \ref{syncprop}.

\begin{proof}[Proof of Theorem \ref{piaconv}]
Let $(\U, \V_a)$ be as in the statement of Theorem \ref{piaconv}. 
%
First note that, by Lemma \ref{start}, $s(\U), s(\V_a)$ and $s(\U \wedge \V_a)$ all satisfy \eqref{eq:initinteg}. Define the vector $\U_{a,(d)}$ by
$$
U_{a,(d),i} := (U_i \wedge V_{a,i}) \mathbf{1}(i \le d) + (U_i \vee V_{a,i})\mathbf{1}(i>d), \ i \in \mathbb{N}, \ d \ge 2.
$$
As $\U_{a,(d)} \ge \U \wedge \V_a$ and  $s(\U \wedge \V_a)$ satisfies \eqref{eq:initinteg}, we have that $s(\U_{a,(d)})$ satisfies \eqref{eq:initinteg}. 

Observe that, with $V_i := (1+ ia/2)V_{a,i}, i \in \mathbb{N},$  $(\U_{a,(d)},\U \vee \V_a,\V)$ defines a coupling of $(\mu, \pi_a, \pi)$.
By the second condition in  \eqref{stara}, $\limsup_{d\to \infty} \frac{U_d}{V_d}<\infty$ a.s. and so the 
%
events
$$
\mathcal{E}^{(D)} := \{ \U \vee \V_a \le D \V\}, D \ge 1,
$$
satisfy
\begin{equation}\label{t21}
\mathbb{P}\left(\mathcal{E}^{(D)}\right) \rightarrow 1 \ \text{ as } \ D \rightarrow \infty.
\end{equation}
 Fix $D\ge 1$. Note that, for $i \in \NN$,
 $$s(\U_{a,(d)}\wedge D\V)_i = \sum_{j=1}^i (\U_{a,(d)})_j \wedge D\V_j \ge \sum_{j=1}^i V_{a,j} - \sum_{j=1}^i |V_{a,j}-U_j|.$$
 By a similar calculation as in \eqref{eq:eq410} it then follows that, for some $i'\in \NN$ and all $i\ge i'$,
 $s(\U_{a,(d)}\wedge D\V)_i \ge (4a)^{-1}\log i$. This shows that $s(\U_{a,(d)}\wedge D\V)$  satisfies \eqref{eq:initinteg}.
 Similarly,  $s((\U \vee \V_a)\wedge D\V)$ satisfies \eqref{eq:initinteg}.
 
 In the rest of the proof we will consider several random variables $\tilde \U$ such that $s(\tilde \U)$ satisfies \eqref{eq:initinteg} and are given on the same probability space.
 For all such  random variables we use Proposition \ref{syncprop} (ii) to construct infinite ordered Atlas models $\XX^{\tilde \U}(\cdot)$  starting
 from $(0,s(\tilde \U)^T)^T$   driven by a common collection of Brownian motions $\{B_i\}_{i \in \NN_0}$ (that are independent of all the 
 $\tilde\U$).  We denote by $\Z^{\tilde \U}(\cdot)$   the associated gap processes.

 Let $\Delta \Z^{*,D}(\cdot) := \Z^{(\U \vee \V_a) \wedge D\V}(\cdot) - \Z^{\U_{a,(d)} \wedge D\V}(\cdot)$.

Fix $\epsilon, \delta>0$, $k \in \mathbb{N}$. Recall the stopping times $\{\sigma_j : j \ge 0\}$ and the random variables $\{\N_T : T \ge 0\}$ from Section \ref{excsec} with $\U_1 = \U_{a,(d)} \wedge D\V$ and $\U_2 = (\U \vee \V_a) \wedge D\V$. Write $t'_d:= \log d$, and $\N'_{d} := \N_{t'_d}$. We suppress the dependence of $\{\sigma_j\}$ and $\N'_d$ on $D$ for notational convenience.

As $\sum_{j=1}^{\infty}j\Delta Z^{*,D}_j(0) \le \sum_{j=1}^{d} j |V_{a,j} - U_j| < \infty$, by Lemma \ref{l1dec}, for any $j \ge 0$, when $\sigma_{2j+2}<\infty$,
\begin{equation*}
\sum_{i=1}^{\infty} \Delta Z^{*,D}_i(\sigma_{2j+2}) - \sum_{i=1}^{\infty} \Delta Z^{*,D}_i(\sigma_{2j+1}) \le -\epsilon/2^k.
\end{equation*}
Hence, as in the proof of \eqref{main1}, for any $A>0$,
\begin{multline}\label{t22}
\mathbb{P}\left(\N'_{d} \ge 2^kA\frac{\log d}{\log \log d} \right) \le \mathbb{P}\left(\sum_{i=1}^d \Delta Z^{*,D}_i(0) \ge A \epsilon \frac{\log d}{\log \log d} - 2^{-k}\epsilon \right)\\
\le \mathbb{P}\left(\frac{\log \log d}{\log d}\sum_{i=1}^d |V_{a,i} - U_i| \ge A \epsilon - 2^{-k}\epsilon \frac{\log \log d}{\log d} \right) \rightarrow 0 \ \text{ as } \ d \rightarrow \infty
\end{multline}
where the claimed limit is a consequence of   \eqref{stara}.

For $d \ge 2$, define the events
$$
\mathcal{E}'_{d,D} := \{\sigma_1 \le \log d, \ \sigma_{2j+2} - \sigma_{2j+1} \le 48D^2 k(k+1)^2 \log \log d \text{ for all } 0 \le j \le \N'_{d} - 1\}.
$$
By Lemma \ref{exclength} with $T= \log d$, $n = \lfloor2^{k} \log d/\log\log d\rfloor$, for sufficiently large $d$,
\begin{equation*}
\mathbb{P}\left((\mathcal{E}'_{d,D})^c \cap \{\sigma_1 \le \log d\}\right) \le \mathbb{P}\left(\N'_{d} > 2^{k}\frac{\log d}{\log \log d} \right) + 5k\left(2^{k}\frac{\log d}{\log \log d}\right)(\log d)^{-2},
\end{equation*}
which converges to zero as $d \rightarrow \infty$ using \eqref{t22} with $A=1$.
On the event $\mathcal{E}'_{d,D}$,
\begin{align*}
\frac{1}{t'_d}\int_0^{t'_d} \mathbf{1}\left(|\Delta Z^{*,D}_k(s)| \ge \epsilon\right) ds &\le \frac{1}{t'_d}\sum_{j=0}^{\N'_{d}-1}\int_{\sigma_{2j+1}}^{\sigma_{2j+2}}\mathbf{1}\left(|\Delta Z^{*,D}_k(s)| \ge \epsilon\right)ds\\
&\le 48D^2 k(k+1)^2  \left(\frac{\log \log d}{\log d}\right) \N'_{d} .\notag
\end{align*}
Hence, using \eqref{t22} with $A= A_{D,k,\delta} := \delta [48D^2 \times 2^{k} k(k+1)^2]^{-1}$,
\begin{align}\label{t23}
&\mathbb{P}\left(\frac{1}{t'_d}\int_0^{t'_d} \mathbf{1}\left(|\Delta Z^{*,D}_k(s)| \ge \epsilon\right) ds > \delta\right)\\
& \le \mathbb{P}\left(\N'_{d} > 2^kA_{D,k,\delta}\frac{\log d}{\log \log d} \right) + \mathbb{P}\left((\mathcal{E}'_{d,D})^c \cap \{\sigma_1 \le \log d\}\right) \rightarrow 0 \ \text{ as } \ d \rightarrow \infty.\notag
\end{align}
Recalling the event $\mathcal{E}^{(D)}$ and using \eqref{t23},
\begin{multline*}
\limsup_{d \rightarrow \infty}\mathbb{P}\left(\frac{1}{t'_d}\int_0^{t'_d} \mathbf{1}\left(|Z^{\U \vee \V_a}_k(s) - Z^{\U_{a,(d)}}_k(s)| \ge \epsilon\right) ds > \delta\right)\\
\le \limsup_{d \rightarrow \infty}\mathbb{P}\left(\frac{1}{t'_d}\int_0^{t'_d} \mathbf{1}\left(|\Delta Z^{*,D}_k(s)| \ge \epsilon\right) ds > \delta\right) + \mathbb{P}\left(\left(\mathcal{E}^{(D)}\right)^c\right) = \mathbb{P}\left(\left(\mathcal{E}^{(D)}\right)^c\right).
\end{multline*}
As $D \ge 1$ is arbitrary, we obtain from \eqref{t21}, for any $\epsilon,\delta>0$, $k \in \mathbb{N}$,
\begin{equation}\label{t24}
\limsup_{d \rightarrow \infty}\mathbb{P}\left(\frac{1}{t'_d}\int_0^{t'_d} \mathbf{1}\left(|Z^{\U \vee \V_a}_k(s) - Z^{\U_{a,(d)}}_k(s)| \ge \epsilon\right) ds \ge \delta\right) = 0.
\end{equation}
Now, from the monotonicity property in Proposition \ref{syncprop}(iii),
\begin{align}\label{t25}
\Z^{\U \wedge \V_a}(\cdot) \le \Z^{\U}(\cdot) \le \Z^{\U \vee \V_a}(\cdot), \ \ \Z^{\U \wedge \V_a}(\cdot) \le \Z^{\V_a}(\cdot) \le \Z^{\U \vee \V_a}(\cdot).
\end{align}
Let $d'(t) := \lceil e^{t/\delta_0} \rceil$, $t >0,$ where $\delta_0$ is the constant appearing in Lemma \ref{farpia}. Define the following measures
\begin{align*}
\overline{\mu}_{a,t}(F) &:= \frac{1}{t}\int_0^t \mathbb{P}\left(\Z^{\U \vee \V_a}(s) \in F\right)ds, \ F \in \mathcal{B}\left(\mathbb{R}_+^{\infty}\right),\\
\underline{\mu}_{a,t}(F) &:= \frac{1}{t}\int_0^t \mathbb{P}\left(\Z^{\U \wedge \V_a}(s) \in F\right)ds, \ F \in \mathcal{B}\left(\mathbb{R}_+^{\infty}\right),
\end{align*}
for $t>0$.
For $k \in \mathbb{N}$, $t >0$, define the measure on $\mathbb{R}_+^k$,
\begin{align*}
\tilde\mu^{(k)}_{a,t}(F) := \frac{1}{t}\int_0^t \mathbb{P}\left(\Z^{\U_{a,(d'(t))}}(s) \vert_k \in F\right)ds, \ F \in \mathcal{B}\left(\mathbb{R}_+^k\right).
\end{align*}
Take any $k \in \mathbb{N}$ and $\z \in \mathbb{R}_+^k$. For $\eta \in \mathbb{R}$, write $\z^{\eta}$ for the vector obtained by adding $\eta$ to each coordinate. Note that, for any $\epsilon >0$,
 \begin{align}\label{t25.5}
\tilde\mu_t^{(k)}((-\infty,\z]) 
 \le \overline{\mu}^{(k)}_{a,t}((-\infty,\z^{\epsilon}])  + \sum_{j=1}^k\frac{1}{t}\int_0^t\mathbb{P}\left(|Z^{\U \vee \V_a}_j(s) - Z^{\U_{a,(d'(t))}}_j(s)| > \epsilon \right)ds.
\end{align}
Hence, using \eqref{t24},
\begin{equation*}
\limsup_{t \rightarrow \infty}\tilde\mu_t^{(k)}((-\infty,\z])  \le \limsup_{t \rightarrow \infty}\overline{\mu}^{(k)}_{a,t}((-\infty,\z^{\epsilon}]) \le \pi^{(k)}_a((-\infty,\z^{\epsilon}]),
\end{equation*}
where the last inequality uses $\Z^{\V_a}(\cdot) \le \Z^{\U \vee \V_a}(\cdot)$ (cf. Proposition \ref{syncprop} (iii)) and the stationarity of the process $\Z^{\V_a}(\cdot)$. Taking $\epsilon \downarrow 0$ above, we obtain for any $k \in \mathbb{N}, \z \in \mathbb{R}_+^k$,
\begin{equation}\label{t26}
\limsup_{t \rightarrow \infty}\tilde\mu_t^{(k)}((-\infty,\z]) \le \pi^{(k)}_a((-\infty,\z]).
\end{equation}
As in the proof of Theorem \ref{piconv}, we will show that for any $k \in \mathbb{N}, \z \in \mathbb{R}_+^k$,
\begin{equation}\label{t27}
\underline{\mu}^{(k)}_{a,t}((-\infty,\z]) - \tilde\mu_t^{(k)}((-\infty,\z]) \rightarrow 0 \ \text{ as } t \rightarrow \infty.
\end{equation}
Fix $t_0 >0$ such that $d'(t_0)>k$.  For $t\ge t_0$, let  $\gamma(t)$ be the probability law of $(0,s(\U_{a,(d'(t))})^T)^T$ and denote by $\Y^{\gamma(t)}$ the unique weak solution of \eqref{rankbd} with $\Y^{\gamma(t)}(0)$ distributed as $\gamma(t)$, given on some probability space with driving Brownian motions $\{W_i\}_{i \in \NN_0}$. Denote the corresponding process of gaps (as defined in \eqref{gapdef}) as $\hat \Z^{\gamma(t)}$. 
As in the proof of Lemma \ref{undconv} we use the fact that a finite-dimensional Atlas model has a unique strong solution. We denote by $\hat \Y^{\gamma(t), d'(t)}$ such a process with $d'(t)+1$ particles, driven by Brownian motions $\{W_i\}_{i=0}^{d'(t)}$ and initial conditions
$\hat Y^{\gamma(t), d'(t)}_i(0) = Y^{\gamma(t)}_i(0)$, $i=0, 1, \ldots, d'(t)$. Denote by 
$\hat \XX^{\gamma(t), d'(t)}$ the corresponding process of ordered particles
and denote the associated gap process by $\hat \Z^{\gamma(t), d'(t)}$. Exactly as in the proof of \eqref{eq:42} we now see that, for any $\zeta \in [0,t]$, $\z \in \mathbb{R}_+^k$,
\begin{multline}\label{t28}
\left|\mathbb{P}\left(\Z^{\U_{a,(d'(t))}}(\zeta) \vert_k \in (-\infty,\z]\right) - \mathbb{P}\left(\hat{\Z}^{\gamma(t), d'(t)}(\zeta) \vert_k \in (-\infty,\z]\right)\right|\\
\le \mathbb{P}\left(\inf_{s \in [0, t]} \inf_{i \ge d'(t)}Y^{\gamma(t)}_{i}(s) \le \sup_{s \in [0, t]} Y^{\gamma(t)}(s)\right).
\end{multline}
Now  let $\gamma^*$ be the probability law of $(0,s(\U\wedge \V_a)^T)^T$ and denote by $\Y^{\gamma^*}$ the unique weak solution of \eqref{rankbd} with $\Y^{\gamma^*}(0)$ distributed as $\gamma^*$. Denote the corresponding process of gaps  as $\hat \Z^{\gamma^*}$. 
The process of ordered particles $\hat \XX^{\gamma^*, d'(t)}$ with $d'(t)+1$ particles and initial values $\Y^{\gamma^*}(0)\vert_{d'(t)}$, and the associated
gap process $\hat \Z^{\gamma^*, d'(t)}$, are defined in a similar manner as $\hat{\XX}^{\gamma(t), d'(t)}$ and $\hat{\Z}^{\gamma(t), d'(t)}$. By a similar argument as above
\begin{multline}\label{t28.1}
\left|\mathbb{P}\left(\Z^{\U \wedge \V_a}(\zeta) \vert_k \in (-\infty,\z]\right) - \mathbb{P}\left(\hat{\Z}^{\gamma^*, d'(t)}(\zeta) \vert_k \in (-\infty,\z]\right)\right|\\
\le \mathbb{P}\left(\inf_{s \in [0, t]} \inf_{i \ge d'(t)}Y^{\gamma^*}_{i}(s) \le \sup_{s \in [0, t]} Y^{\gamma^*}_{(k)}(s)\right).
\end{multline}
Since $\U\wedge \V_a \vert_{d'(t)} = \U_{a,(d'(t))}\vert_{d'(t)}$ we have that $\hat{\Z}^{\gamma^*, d'(t)}$ and
$\hat{\Z}^{\gamma(t), d'(t)}$ have the same distribution and so from \eqref{t28} and \eqref{t28.1} we have that
\begin{equation}\label{eq:eq550}
\begin{aligned}
&\left|\mathbb{P}\left(\Z^{\U_{a,(d'(t))}}(\zeta) \vert_k \in (-\infty,\z]\right) - \mathbb{P}\left(\Z^{\U \wedge \V_a}(\zeta) \vert_k \in (-\infty,\z]\right)\right|\\
&\le \mathbb{P}\left(\inf_{s \in [0, t]} \inf_{i \ge d'(t)}Y^{\gamma(t)}_{i}(s) \le \sup_{s \in [0, t]} Y^{\gamma(t)}(s)\right)
+ \mathbb{P}\left(\inf_{s \in [0, t]} \inf_{i \ge d'(t)}Y^{\gamma^*}_{i}(s) \le \sup_{s \in [0, t]} Y^{\gamma^*}_{(k)}(s)\right).
\end{aligned}
\end{equation}

As in the proof of Theorem \ref{piconv},
\begin{multline*}
\mathbb{P}\left(\inf_{s \in [0, t]} \inf_{i \ge d'(t)}Y^{\gamma(t)}_{i}(s) \le \sup_{s \in [0, t]} Y^{\gamma(t)}_{(k)}(s)\right)\\
\le \mathbb{P}\left(\inf_{s \in [0, \delta_0 \log d'(t)]} \inf_{i \ge d'(t)}Y^{\gamma^*}_{i}(s) \le \frac{1}{8a}\log d'(t)\right) + \mathbb{P}\left(\sup_{s \in [0, \delta_0 \log d'(t)]} Y^{\tilde \gamma}_{(k)}(s) \ge \frac{1}{8a}\log d'(t)\right),
\end{multline*}
where  $\Y^{\tilde \gamma}(\cdot)$ is the infinite Atlas model with $\Y^{\tilde \gamma}(0)$ distributed as $(0, s(\U \vee \V_a)^T)^T$.
The last inequality above uses the observation $\U \wedge \V_a \le \U_{(d'(t)} \le \U \vee \V_a$ and \cite[Corollary 3.10 (i)]{AS}.
By Lemma \ref{farpia}, the right hand side above converges to zero as $d \rightarrow \infty$. Similarly,
$$
\mathbb{P}\left(\inf_{s \in [0, t]} \inf_{i \ge d'(t)}Y^{\gamma^*}_{i}(s) \le \sup_{s \in [0, t]} Y^{\gamma^*}_{(k)}(s)\right) \rightarrow 0 \ \text{ as} \ t \rightarrow \infty.
$$
Hence, from \eqref{eq:eq550}, for any $k \in \mathbb{N}$, $\z \in \mathbb{R}_+^k$,
$$
\sup_{\zeta \in [0,t]} \left|\mathbb{P}\left(\Z^{\U_{a,(d'(t))}}(\zeta) \vert_k \in (-\infty,\z]\right) - \mathbb{P}\left(\Z^{\U \wedge \V_a}(\zeta) \vert_k \in (-\infty,\z]\right)\right| \rightarrow 0 \ \text{ as } t \rightarrow \infty,
$$
which implies \eqref{t27}. By \eqref{t26} and \eqref{t27}, for any $k \in \mathbb{N}$, $\z \in \mathbb{R}_+^k$,
$$
\limsup_{t \rightarrow \infty}\underline{\mu}^{(k)}_{a,t}((-\infty,\z]) \le \pi^{(k)}_a((-\infty,\z]).
$$
Moreover, as $\Z^{\U \wedge \V_a}(\cdot) \le \Z^{\V_a}(\cdot)$, $\liminf_{t \rightarrow \infty}\underline{\mu}^{(k)}_{a,t}((-\infty,\z]) \ge \pi^{(k)}_a((-\infty,\z])$. Thus, we conclude that for any $k \in \mathbb{N}$, $\z \in \mathbb{R}_+^k$,
\begin{equation}\label{mm2}
\lim_{t \rightarrow \infty}\underline{\mu}^{(k)}_{a,t}((-\infty,\z]) = \pi^{(k)}_a((-\infty,\z]).
\end{equation}
Now, note that by \eqref{t25.5} and \eqref{t24}, for any $\epsilon>0, k \in \mathbb{N}$, $\z \in \mathbb{R}_+^k$
$$
\liminf_{t \rightarrow \infty}\tilde\mu_t^{(k)}((-\infty,\z^{-\epsilon}]) 
 \le \liminf_{t \rightarrow \infty}\overline{\mu}^{(k)}_{a,t}((-\infty,\z]).
$$
Also, by \eqref{t27}, $\liminf_{t \rightarrow \infty}\tilde\mu_t^{(k)}((-\infty,\z^{-\epsilon}])  = \liminf_{t \rightarrow \infty}\underline\mu^{(k)}_{a,t}((-\infty,\z^{-\epsilon}])$. Hence, using \eqref{mm2},
$$
\pi^{(k)}_a((-\infty,\z^{-\epsilon}]) = \liminf_{t \rightarrow \infty}\underline\mu^{(k)}_{a,t}((-\infty,\z^{-\epsilon}]) \le \liminf_{t \rightarrow \infty}\overline{\mu}^{(k)}_{a,t}((-\infty,\z]).
$$
As $\epsilon>0$ is arbitrary, we conclude $\pi^{(k)}_a((-\infty,\z]) \le \liminf_{t \rightarrow \infty}\overline{\mu}^{(k)}_{a,t}((-\infty,\z])$. Moreover, as $\U \vee \V_a \ge \V_a$, we have $\limsup_{t \rightarrow \infty}\overline{\mu}^{(k)}_{a,t}((-\infty,\z]) \le \pi^{(k)}_a((-\infty,\z])$. Hence, for any $k \in \mathbb{N}$, $\z \in \mathbb{R}_+^k$,
\begin{equation}\label{mm3}
\lim_{t \rightarrow \infty}\overline{\mu}^{(k)}_{a,t}((-\infty,\z]) = \pi^{(k)}_a((-\infty,\z]).
\end{equation}
Finally, as $\Z^{\U\wedge \V_a}(\cdot) \le \Z^{\U}(\cdot) \le \Z^{\U\vee \V_a}(\cdot)$, for any $k \in \mathbb{N}$, $\z \in \mathbb{R}_+^k$, $t >0$,
$$
\overline{\mu}^{(k)}_{a,t}((-\infty,\z]) \le \mu^{(k)}_t((-\infty,\z]) \le \underline{\mu}^{(k)}_{a,t}((-\infty,\z]),
$$
where 
$$
{\mu}_t(F) := \frac{1}{t}\int_0^t \mathbb{P}\left(\Z^{\U }(s) \in F\right)ds, \ F \in \mathcal{B}\left(\mathbb{R}_+^{\infty}\right), t >0.
$$
Thus, using \eqref{mm2} and \eqref{mm3}, we obtain for any $k \in \mathbb{N}$, $\z \in \mathbb{R}_+^k$,
$$
\lim_{t \rightarrow \infty}\mu^{(k)}_t((-\infty,\z]) = \pi^{(k)}_a((-\infty,\z]),
$$
which proves the theorem.
\end{proof}

\begin{proof}[Proof of Corollary \ref{staracheck}]
Suppose $\mu := \bigotimes_{i=1}^{\infty} \operatorname{Exp}(2 + ia + \lambda_i)$. Let $\V \sim \pi$. Consider the coupling $(\U,\V_a)$ of $\mu$ and $\pi_a$ defined as
$$
U_i := \frac{2V_i}{2 + ia + \lambda_i}, \ \ V_{a,i} := \frac{2V_i}{2 + ia}, \ \ i \in \mathbb{N}.
$$
For sufficiently large $i \in \mathbb{N}$,
\begin{equation}\label{ac0}
|V_{a,i} - U_i| = \frac{2|\lambda_i|}{(2+ia)(2 + ia + \lambda_i)} V_i \le \frac{2|\lambda_i|}{(1-\beta)(2+ia)^2} V_i  \le \frac{2}{a^2(1-\beta)}\frac{|\lambda_i|}{i^2} V_i.
\end{equation}
By \eqref{aexp},
\begin{equation}\label{ac1}
 \limsup_{d \rightarrow \infty} \frac{\log \log d}{\log d} \sum_{i=1}^d \mathbb{E}\left(\frac{|\lambda_i| V_i}{i^2}\right) = 0.
\end{equation}
Moreover, $M^*_i := \sum_{j=1}^i \left(\frac{|\lambda_j| V_j}{j^2} - \mathbb{E}\left(\frac{|\lambda_j| V_j}{j^2}\right)\right)$, $i \in \mathbb{N}$, is a martingale with uniformly bounded second moment.
Indeed, for $i \in \NN$,
$$
\mathbb{E}[(M^*_i)^2] = \sum_{j=1}^i\frac{|\lambda_j|^2}{4j^4}
\le \sum_{j=1}^{\infty}\frac{(C^2+1)(2+ja)^2}{4j^4}  <\infty.$$
Hence, by the martingale convergence theorem \cite[Theorem 11.10]{klenke2013probability}, $M^*_i$ almost surely converges to a finite limit as $i \rightarrow \infty$. This, along with \eqref{ac1}, implies that, almost surely,
$$
 \limsup_{d \rightarrow \infty} \frac{\log \log d}{\log d} \sum_{i=1}^d \frac{|\lambda_i| V_i}{i^2} = 0.
$$
By \eqref{ac0}, we conclude that the first statement in \eqref{stara} holds. 
Also note that
$$\frac{U_d}{dV_{a,d}} = \frac{2+da}{d(2+da+\lambda_d)} \le \frac{1}{(1-\beta)d}$$
which shows that the second statement in \eqref{stara} holds as well.
\end{proof}

 \section*{Acknowledgements}
The research of AB was supported in part by the NSF (DMS-1814894 and DMS-1853968). \textcolor{black}{We thank two anonymous referees whose valuable inputs significantly improved the article.}



\bibliographystyle{imsart-number} 
\bibliography{atlas_ref}

\end{document}